\documentclass{amsart}
\usepackage{amssymb}
\usepackage{graphicx}
\usepackage[shortlabels]{enumitem}
\usepackage{multirow}
\usepackage{amsmath,color}
\usepackage{hyperref}
\usepackage{url}
\usepackage{longtable}
\usepackage{tikz}
% \usepackage{fullpage}
% \usepackage[style=plain,firstinits=true]{biblatex}
% \addbibresource{sbib.bib}

%theorems
\theoremstyle{plain}% default
\newtheorem{theorem}{Theorem}[section]

\newtheorem{proposition}[theorem]{Proposition}
\newtheorem{lemma}[theorem]{Lemma}
\newtheorem{corollary}[theorem]{Corollary}

%definitions
\theoremstyle{definition}

%remarks
\theoremstyle{remark}
\newtheorem{remark}[theorem]{Remark}

{% This is the begin code
%\textbf{Proof.}
%}
%{% This is the end code
%\hfill$\square$\\}

%Some definitions%%%%%%%%%%%%%%%%

\newcommand{\Z}{\mathbb{Z}}

\begin{document}
\title[Equiangular lines and regular graphs]{Equiangular line systems and \\ switching classes containing regular graphs}

\author[G. R. W. Greaves]{ Gary R. W. Greaves }

%%%%%%%%%%%%%%%%%%%%%%%%%%%%%%%%%%%%%%%%%%%%%%%%%%%%%%%%%%%%%%%%%%

\address{School of Physical and Mathematical Sciences, 
Nanyang Technological University, 21 Nanyang Link, Singapore 637371}
\email{grwgrvs@gmail.com}
\thanks{The author was supported by JSPS KAKENHI; 
grant number: 26$\cdot$03903 and by MOE AcRF; grant number: 2016-T1-002-067-01}

\subjclass[2010]{Primary 05C50, secondary 05C22}
	
\keywords{Equiangular lines, Seidel matrix, switching class, regular graph}

\begin{abstract}
	We develop the theory of equiangular lines in Euclidean spaces.
	Our focus is on the question of when a Seidel matrix having precisely three distinct eigenvalues has a regular graph in its switching class.
	We make some progress towards an answer to this question by finding some necessary conditions and some sufficient conditions.
	Furthermore, we show that the cardinality of an equiangular line system in $18$ dimensional Euclidean space is at most $60$.
\end{abstract}

\maketitle

\section{Introduction and preliminaries}

Let $d$ be a positive integer and let $\mathcal L = \{ l_1, \dots , l_n \}$ be a system of lines in $\mathbb R^d$, where each $l_i$ is spanned by a unit vector $\mathbf{v}_i$.
The line system $\mathcal L$ is called \textbf{equiangular} if there exists a constant $\alpha>0$ such that the inner product $\langle \mathbf{v}_i, \mathbf{v}_j \rangle = \pm \alpha$ for all $i \ne j$.
(The constant $\alpha$ is called the \emph{common angle}.)
Let $N(d)$ denote the maximum cardinality of a system of equiangular lines in dimension $d$.
Determining values of the sequence $\{N(d)\}_{d \in \mathbb N}$ is a classical problem that has received much attention~\cite{lemmens1973equiangular,SeidelVanLint66, Seidel1974:EulerSwitchOdd} and recently \cite{barg2014new,bukh2016bounds,glazyrin2016upper, Greaves2016208, Szollosi2017kq} there have been some improvements to the upper bounds for $N(d)$ for various values of $d$.
One contribution of this article is to improve the upper bound for $N(18)$ showing that $N(18) \leqslant 60$.
Furthermore, we show that certain Seidel matrices corresponding to systems of $60$ equiangular lines in $\mathbb R^{18}$ each must contain in their switching classes a regular graph having four distinct eigenvalues (see Remark~\ref{rem:607290graph} below).

In Table~\ref{tab:equi} below, we give the currently known (including the improvement from this paper) values or lower and upper bounds for $N(d)$ for $d$ at most $23$.

\begin{table}[htbp]
	\begin{center}
		\setlength{\tabcolsep}{2pt}
	\begin{tabular}{c|cccccccccccccccc}
		$d$  & 2 & 3        & 4           & 5  & 6  & 7--13 & 14  & 15 & 16 & 17 & 18 & 19 & 20 & 21 & 22 & 23\\\hline
		$N(d)$  & 3 & 6        & 6           & 10 & 16 & 28    & 28--29  & 36 & 40--41 & 48--50 & 54--60 & 72--75 & 90--95  & 126 & 176 & 276
	\end{tabular}
	\end{center}
	\caption{Bounds for the sequence $N(d)$ for $2\leqslant d\leqslant 23$.}
	\label{tab:equi}
\end{table}

For tables of bounds for equiangular line systems in larger dimensions we refer the reader to Barg and Yu~\cite{barg2014new}.

A \textbf{Seidel matrix} is a symmetric $\{0,\pm 1\}$-matrix $S$ with zero diagonal and all off-diagonal entries nonzero.
% It is well-known~\cite{SeidelVanLint66} that a system of equiangular lines with nonzero common angle can be expressed as a Seidel matrix and, conversely, given a Seidel matrix, one can construct an equiangular line system (having a nonzero common angle).
It is well-known~\cite{SeidelVanLint66} that Seidel matrices and equiangular line systems (with positive common angle) are equivalent: for $i\ne j$, the inner product $\langle \mathbf{v}_i, \mathbf{v}_j \rangle = a$ (with $a \in \{-\alpha,\alpha\}$) if and only if the $(i,j)$th entry of its corresponding Seidel matrix is $a/\alpha$.
Moreover, if $\mathcal L$ is an equiangular line system of $n$ lines in $\mathbb R^d$ with common angle $\alpha > 0$ then its corresponding Seidel matrix $S$ has smallest eigenvalue $-1/\alpha$ with multiplicity $n-d$.
Conversely, given a Seidel matrix $S$ with smallest eigenvalue $-1/\alpha <0$ whose multiplicity is $n-d$, we can construct an equiangular line system of $n$ lines in $\mathbb R^d$ with common angle $\alpha$.

Let $\mathcal O_n(\mathbb{Z})$ denote the orthogonal group generated by signed permutation matrices of order $n$ and let $X$ and $Y$ be two $\mathbb Z$-matrices of order $n$.
We say that $X$ and $Y$ are \textbf{switching equivalent} (denoted $X \cong Y$) if $X = P^{\top}YP$ for some matrix $P \in \mathcal O_n(\mathbb{Z})$.
% If $P$ is a diagonal $\{ \pm 1 \}$-matrix then we say that $X$ and $Y$ are \textbf{switching equivalent}.
It is clear that two switching-equivalent matrices have the same spectrum.
We will sometimes use the verb ``to switch'' to mean conjugation by diagonal matrices from $\mathcal O_n(\mathbb{Z})$.

The symbols $I$, $J$, and $O$ will (respectively) always denote the identity matrix, the all-ones matrix, and the all-zeros matrix; the order of each matrix should be clear from the context in which it is used, however, the order will sometimes be indicated by a subscript.
We use $\mathbf{1}$ to denote the all-ones (column) vector.

Let $S$ be a Seidel matrix.
Then $A = (J-I-S)/2$ is the adjacency matrix for a graph $\Gamma$, which we call the \textbf{underlying graph} of $S$.
The set of underlying graphs of Seidel matrices that are switching equivalent to $S$ is called the \textbf{switching class} of $S$.
Throughout this paper we use techniques from spectral graph theory and we refer to \cite{BrouwerHaemers:Spectra} for the necessary background.

We will be concerned with the question of when a Seidel matrix has a regular graph in its switching class.
Using regular graphs to construct systems of equiangular lines is not new.
Indeed, De Caen~\cite{de2000large} used a family of previously studied regular graphs to give a construction of an infinite family of large systems of equiangular lines.
Also it was shown by Taylor~\cite{Taylor77} that any principal submatrix of order $n-1$ of an $n \times n$ Seidel matrix $S$ having precisely two distinct eigenvalues must have a strongly regular graph in its switching class.

In this paper we give some necessary and some sufficient conditions for when a Seidel matrix with precisely three distinct eigenvalues has a regular graph in its switching class.
The above result of Taylor relating Seidel matrices with precisely two distinct eigenvalues to strongly regular graphs is a special case of our results.
Indeed, a principal submatrix of order $n-1$ of an $n \times n$ Seidel matrix with precisely two distinct eigenvalues has precisely three distinct eigenvalues~\cite[Lemma 5.12]{Greaves2016208}.
So the above result of Taylor says that certain Seidel matrices having precisely three distinct eigenvalues have a (strongly) regular graph in their switching classes.
In fact this result of Taylor is a special case of Corollary~\ref{cor:simple} below.

The paper is organised as follows.
In Section~\ref{sec:seidel_matrices_with_few_eigenvalues} we present some basic results about Seidel matrices.
In Section~\ref{sec:regular_graphs_in_the_switching_class} we motivate the question of whether Seidel matrices with precisely three distinct eigenvalues have a regular graph in their switching classes and we find some necessary conditions.
We prove some lemmas in Section~\ref{sec:positive_semidefinite_matrices} for use in Section~\ref{sec:regular_eigenspaces_of_seidel_matrices_with_three_eigenvalues} where we prove our sufficient conditions for when a Seidel matrix with three eigenvalues has a regular graph in its switching class.
In Section~\ref{sec:relative_bound} we extend the work in \cite{Greaves2016208} of strengthening the relative bound and we give some open problems in Section~\ref{sec:open_problems}.

\section{Basic properties of Seidel matrices} % (fold)
\label{sec:seidel_matrices_with_few_eigenvalues}

% In this paper, $I$, $J$, and $O$ will (respectively) always denote the identity matrix, the all-ones matrix, and the all-zeros matrix; the order of each matrix should be clear from the context in which it is used.
% We use $\mathbf{0}$ and $\mathbf{1}$ to denote the all-zeros and all-ones (column) vectors respectively.
%
% Let $\mathcal O_n(\mathbb{Z})$ denote the orthogonal group generated by signed permutation matrices of order $n$.
% We say that two matrices $X$ and $Y$ (of order $n$) are \textbf{equivalent} if $X = P^{\top}YP$ for some matrix $P \in \mathcal O_n(\mathbb{Z})$.
% If $P$ is a diagonal $\{ \pm 1 \}$-matrix then we say that $X$ and $Y$ are \textbf{switching equivalent}.
% It is clear that two equivalent matrices have the same spectrum.
%
% A \textbf{Seidel matrix} is a symmetric $\{0,\pm 1\}$-matrix $S$ with zero diagonal and all off-diagonal entries nonzero.
% Let $S$ be a Seidel matrix.
% Then $A = (J-I-S)/2$ is an adjacency matrix for a graph $\Gamma$, which we call the \textbf{underlying graph} of $S$.
% The set of underlying graphs of Seidel matrices that are switching equivalent to $S$ is called the \textbf{switching class} of $S$.

\subsection{Eigenvalues of Seidel matrices} % (fold)
\label{sub:eigenvalues_of_seidel_matrices}

Let $M$ be a real symmetric matrix with $r$ distinct eigenvalues $\theta_1 < \dots < \theta_r$ such that $\theta_i$ has multiplicity $m_i$.
We write the spectrum of $M$ as $\{ [\theta_1]^{m_1},\dots,[\theta_r]^{m_r} \}$.
Less accurately, we may write that a real symmetric matrix has spectrum $\{\{ [\theta_1]^{m_1},\dots,[\theta_r]^{m_r} \}\}$ implying that $\theta_i$ may be equal to $\theta_j$ for some $i$ and $j$.
In this instance the multiplicity of an eigenvalue $\theta$ of $M$ is equal to the sum of the $m_i$ for which $\theta_i = \theta$. 

Let $S$ be a Seidel matrix of order $n$.
Then $S$ is a real symmetric matrix with diagonal entries $0$. 
Moreover, the diagonal entries of $S^2$ are all equal to $n-1$.
Putting the above facts about Seidel matrices together, we obtain the following equations for the traces of $S$ and $S^2$:
\begin{align}
	\operatorname{tr} S &= 0; \label{eqn:tr} \\
	\operatorname{tr} S^2 &= n(n-1). \label{eqn:tr2}
\end{align}

% We will also heavily use the famous interlacing theorem due to Cauchy~\cite{Cau:Interlace} (also see \cite{Fisk:Interlace05} for a short proof).
% We will sometimes refer to the application of this theorem with the phrase `by interlacing'.
%
% \begin{theorem}[Interlacing]\label{thm:interlacing}
% 	Let $X$ be a real symmetric matrix of order $n$ with eigenvalues $\lambda_1 \leqslant \dots \leqslant \lambda_n$.
% 	Let $Y$ be a principal submatrix of $X$ of order $m$ with eigenvalues $\mu_1 \leqslant \dots \leqslant \mu_m$.
% 	Then $\lambda_i \leqslant \mu_i \leqslant \lambda_{n-m+i}$.
% \end{theorem}

% subsection eigenvalues_of_seidel_matrices (end)

\subsection{Seidel matrices over finite rings} % (fold)
\label{sub:seidel_matrices_over_finite_rings}

Let $\mathcal M_n$ denote the ring of integer matrices of order $n$.
Let $S$ be a Seidel matrix.
% Denote by $I_n$ and $J_n$ the $n \times n$ identity matrix and the $n \times n$ matrix all of whose entries are $1$, respectively; when the size of the matrix is clear or irrelevant, we merely write $I$ and $J$ respectively.
Since we can write $S = J - I - 2A$ where $A$ is a graph adjacency matrix, the next lemma follows immediately.

\begin{lemma}\label{lem:mod2}
	Let $S$ be a Seidel matrix of order $n$.
	Then modulo $2\mathcal M_n$ we have
	\[
	S^k \equiv
		\begin{cases}
			J-I, &  \text{ if $k$ is odd;} \\
			nJ-I, & \text{ if $k$ is even.}
		\end{cases}
	\]
\end{lemma}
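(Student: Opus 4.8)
The plan is to reduce everything modulo $2$ at the outset and then run a short induction on $k$. The identity $S = J - I - 2A$ immediately gives $S \equiv J - I \pmod{2\mathcal M_n}$, which both settles the base case $k = 1$ and, more usefully, reduces the whole problem to computing the powers $(J-I)^k$ modulo $2$. The only arithmetic facts I would need are $J^2 = nJ$ and $IJ = JI = J$, together with the observation that modulo $2$ every integer matrix $M$ satisfies $-M \equiv M$ (since $2M \equiv O$); in particular $-I \equiv I$ and $-J \equiv J$.

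Assuming the claim for $S^k$, I would split the inductive step according to the parity of $k$ and multiply by $S \equiv J - I$. If $k$ is odd, so that $S^k \equiv J - I$, then
\[
S^{k+1} \equiv (J-I)^2 = nJ - 2J + I \equiv nJ - I \pmod{2\mathcal M_n},
\]
which is the required even-case expression. If $k$ is even, so that $S^k \equiv nJ - I$, then
\[
S^{k+1} \equiv (nJ - I)(J - I) = (n^2 - n - 1)J + I \equiv J - I \pmod{2\mathcal M_n},
\]
where the final congruence uses that $n^2 - n = n(n-1)$ is even (so the coefficient of $J$ collapses) together with $-J \equiv J$ and $I \equiv -I$. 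This recovers the odd-case expression and closes the induction.

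There is no genuine obstacle here; once one works modulo $2$ the computation is entirely routine. The only point that requires a little care is the sign bookkeeping: one must keep in mind that $-I$ and $I$ are congruent modulo $2$, so that both branches correctly produce a $-I$ term, and that $n(n-1)$ is always even, which is exactly what makes the coefficient of $J$ vanish in the step from an even power to an odd one.
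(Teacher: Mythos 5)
Your proof is correct and follows the same route as the paper, which simply notes that $S = J - I - 2A$ makes the lemma immediate: reducing to $S \equiv J - I \pmod{2\mathcal M_n}$ and computing powers of $J - I$ using $J^2 = nJ$ is precisely the computation left implicit there. Your induction merely writes out that routine verification in full.
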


Next we have the following lemma about matrices congruent to $J-I$ modulo $2\mathcal M_n$.\footnote{This result, due to Willem Haemers, was pointed out to the author by Jack Koolen.}
We denote the characteristic polynomial of a matrix $M$ by $\chi_M(x) := \det(xI - M)$.

\begin{lemma}\label{lem:Neumann}
	Let $M$ be an $n\times n$ matrix congruent to $J-I$ modulo $2\mathcal M_n$.
	Then modulo $2\mathbb Z[x]$ we have
	\[
		\chi_S(x) \equiv
		\begin{cases}
			(x+1)^n & \text{ if $n$ is even, } \\
			x(x+1)^{n-1} & \text{ if $n$ is odd.}
		\end{cases}
	\]
\end{lemma}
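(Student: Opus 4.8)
The plan is to reduce the problem to a single explicit computation over $\mathbb{F}_2$. The key observation is that the characteristic polynomial modulo $2$ depends only on the matrix modulo $2\mathcal{M}_n$. Indeed, $\chi_M(x) = \det(xI - M)$ has integer coefficients that are polynomial expressions in the entries of $M$, and reduction modulo $2$ is a ring homomorphism $\mathbb{Z} \to \mathbb{F}_2$ that extends coefficientwise to $\mathbb{Z}[x]$ and entrywise to $\mathcal{M}_n$. Since the determinant is a polynomial in the matrix entries, forming the characteristic polynomial commutes with this reduction. Hence from $M \equiv J - I \pmod{2\mathcal{M}_n}$ I would conclude
\[
\chi_M(x) \equiv \chi_{J-I}(x) \pmod{2\mathbb{Z}[x]},
\]
which reduces the lemma to computing $\chi_{J-I}(x)$ and reading it off modulo $2$.

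Next I would compute $\chi_{J-I}(x)$ exactly using the well-known spectrum of $J$. The all-ones matrix $J$ of order $n$ has eigenvalue $n$ with eigenvector $\mathbf{1}$, and eigenvalue $0$ with multiplicity $n-1$ on $\mathbf{1}^\perp$. Subtracting $I$ shifts every eigenvalue down by $1$, so $J - I$ has eigenvalue $n-1$ once and eigenvalue $-1$ with multiplicity $n-1$, giving
\[
\chi_{J-I}(x) = \big(x - (n-1)\big)(x+1)^{n-1}.
\]

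Finally I would reduce this product modulo $2$ according to the parity of $n$. If $n$ is even then $n-1$ is odd, so $x - (n-1) \equiv x + 1 \pmod{2}$ and the product collapses to $(x+1)^n$; if $n$ is odd then $n-1$ is even, so $x - (n-1) \equiv x \pmod{2}$ and the product becomes $x(x+1)^{n-1}$. These are precisely the two claimed cases. I do not anticipate a real obstacle here: the only step that warrants a moment's care is the functoriality claim that $\chi_M \bmod 2$ is determined by $M \bmod 2$, and once that is in place the rest is an immediate spectral computation for the explicit matrix $J - I$.
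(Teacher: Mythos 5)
Your proof is correct. Note that the paper itself offers no proof of this lemma --- it is stated without argument and attributed (in a footnote) to Willem Haemers --- so there is nothing to compare against; your argument is the natural one and is complete. Both steps are sound: reduction modulo $2$ is a ring homomorphism, so $\chi_M(x) \equiv \chi_{J-I}(x) \pmod{2\mathbb{Z}[x]}$ whenever $M \equiv J-I \pmod{2\mathcal{M}_n}$, and the exact computation $\chi_{J-I}(x) = \bigl(x-(n-1)\bigr)(x+1)^{n-1}$ from the spectrum of $J$ reduces to the two claimed cases according to the parity of $n$. (Incidentally, the lemma as printed writes $\chi_S$ where it means $\chi_M$; you have implicitly corrected this.)
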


It follows from this lemma that, if its order is even then, a Seidel matrix cannot have any even integer eigenvalues.
Furthermore, a Seidel matrix of odd order must have a simple eigenvalue.
Indeed, we record this consequence as a corollary.
 
\begin{corollary}\label{cor:oddSeidel}
	Let $S$ be a Seidel matrix of odd order.
	Then $S$ has an eigenvalue of multiplicity $1$.
\end{corollary}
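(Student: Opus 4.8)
The plan is to argue by contradiction through the reduction of the characteristic polynomial modulo $2$, using the two preceding lemmas. Since $S$ is a Seidel matrix, the case $k=1$ of Lemma~\ref{lem:mod2} gives $S \equiv J-I \pmod{2\mathcal M_n}$, so Lemma~\ref{lem:Neumann} applies to $M=S$ and, as $n$ is odd, yields $\chi_S(x) \equiv x(x+1)^{n-1} \pmod{2\mathbb Z[x]}$. The single feature I want to extract from this is that, read in $\mathbb F_2[x]$, the irreducible $x$ divides the reduction $\bar\chi_S$ to exactly the first power, whereas the only other irreducible occurring is $x+1$.

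Next I would translate ``$S$ has no eigenvalue of multiplicity $1$'' into a statement about $\chi_S$ over $\mathbb Z$. Factor $\chi_S = \prod_i p_i^{e_i}$ into distinct monic irreducibles over $\mathbb Z$ (legitimate by Gauss's lemma, since $\chi_S$ is monic with integer coefficients). Because each $p_i$ is separable in characteristic $0$, every root of $p_i$ is a root of $\chi_S$ of multiplicity exactly $e_i$, and distinct factors have disjoint root sets; since the multiplicity of an eigenvalue is its multiplicity as a root of $\chi_S$, the multiplicities occurring among the eigenvalues of $S$ are precisely the exponents $e_i$. Thus assuming, for contradiction, that $S$ has no eigenvalue of multiplicity $1$ amounts to assuming $e_i \geq 2$ for every $i$.

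Finally I would reduce the factorization modulo $2$. As reduction $\mathbb Z[x] \to \mathbb F_2[x]$ is a ring homomorphism, $\chi_S \equiv \prod_i \bar p_i^{\,e_i} \pmod 2$, and combining this with the first paragraph gives the identity $x(x+1)^{n-1} = \prod_i \bar p_i^{\,e_i}$ in the UFD $\mathbb F_2[x]$. Comparing the exponent of the irreducible $x$ on the two sides, the left-hand side contributes $1$, while the right-hand side contributes $\sum_i e_i\, v_x(\bar p_i)$, where $v_x$ denotes the multiplicity of the factor $x$. Under the assumption $e_i \geq 2$, each summand is either $0$ or at least $2$, so the sum can never equal $1$ --- the desired contradiction. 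Hence some $e_i = 1$, and $S$ has an eigenvalue of multiplicity $1$.

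I expect the middle step to be the one requiring the most care to state, since it is the only part beyond bookkeeping: the claim that eigenvalue multiplicities coincide with the exponents $e_i$ of the $\mathbb Z$-irreducible factorization rests on separability in characteristic $0$. The concluding arithmetic --- that $1$ cannot be written as a sum of $0$'s and integers at least $2$ --- is the crux that makes odd order special, and it is precisely the point at which the even/odd dichotomy of Lemma~\ref{lem:Neumann} is used (for even $n$ the target $(x+1)^n$ carries no stray factor of $x$, and no contradiction arises).
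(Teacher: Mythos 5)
Your proof is correct and follows essentially the same route as the paper's: factor $\chi_S$ into monic $\mathbb{Z}$-irreducibles, reduce modulo $2$ via Lemmas~\ref{lem:mod2} and~\ref{lem:Neumann}, and use unique factorization in $\mathbb{Z}[x]/2\mathbb{Z}[x]$ to compare the exponent of the factor $x$, which equals $1$ precisely because $n$ is odd. The paper states this directly (one irreducible factor must reduce to $x(x+1)^{l}$, forcing its exponent in $\chi_S$ to be $1$) while you phrase it as a contradiction and spell out the separability step, but the content is identical.
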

\begin{proof}
	Suppose the characteristic polynomial of $S$ factorises as $f_1(x)^{m_1}\dots f_r(x)^{m_r}$, where each $f_i(x)$ is an irreducible monic integer polynomial.
	Since $\mathbb Z[x] / 2\mathbb Z[x]$ is a unique factorisation domain, one of the $f_i(x)$, ($f_1(x)$ say) must be congruent to $x(x+1)^{l}$ modulo $2\mathbb Z[x]$ for some $l \in \mathbb N$.
	It is then easy to see that $m_1$ must be equal to $1$.
\end{proof}

An \textbf{Euler graph} is a graph all of whose vertices have even degree.
The next lemma gives sufficient conditions for when a Seidel matrix has an Euler graph in its switching class.

\begin{lemma}[See {\cite[Theorem 5.2]{Greaves2016208}}, {\cite[Theorem 1]{Hage2003:Switching}}, and \cite{Seidel1974:EulerSwitchOdd}]\label{lem:Eulergraph}
	Let $S$ be a Seidel matrix of order $n$ having precisely $r$ distinct eigenvalues.
	Suppose that $n$ is odd or $r = 3$.
	Then $S$ is switching equivalent to the matrix $J-I-2A$ where $A$ is the adjacency matrix of an Euler graph.
\end{lemma}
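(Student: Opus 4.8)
The plan is to translate the conclusion into a purely combinatorial condition on the degree sequence of the underlying graph, dispose of the odd case by an explicit switching, and isolate the genuinely spectral content in the even case. First I would record how switching acts on degrees: if $\Gamma$ has adjacency matrix $A$ and we switch with respect to a vertex subset $U$ (with indicator $u_v\in\{0,1\}$ and $|U|=\sum_v u_v$), then a short count of the flipped edges shows that the degree of $v$ in the switched graph has parity $d_v + |U| + u_v n \pmod 2$, where $d_v$ is the degree of $v$ in $\Gamma$. Producing an Euler graph in the switching class is exactly the problem of choosing $U$ so that all of these parities vanish.

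For $n$ odd I would switch with respect to the set $U=\{v : d_v \text{ is odd}\}$. By the handshake lemma $|U|$ is even, so $|U|\equiv 0 \pmod 2$, and since $n$ is odd the parity above reduces to $d_v + u_v$, which is even for every $v$ (both when $v\in U$ and when $v\notin U$). Hence the switched graph is Euler, and this argument uses nothing beyond $n$ being odd; it is the classical result of~\cite{Seidel1974:EulerSwitchOdd}.

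The even case is where the hypothesis $r=3$ must be used. When $n$ is even the term $u_v n$ vanishes modulo $2$, so switching with respect to $U$ adds the single constant $|U|\bmod 2$ to every degree parity simultaneously. Consequently the property ``all degrees have the same parity'' is an invariant of the switching class, and the class contains an Euler graph if and only if this property holds (one then takes $U=\varnothing$ when the common parity is even and $U$ a single vertex when it is odd). So the remaining task is to prove that a Seidel matrix of even order with exactly three distinct eigenvalues has all degrees of equal parity; equivalently, writing $2A = J-I-S$, that $S\mathbf{1}$ is constant modulo $4$.

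To access this I would use that, having exactly three distinct eigenvalues, $S$ satisfies a monic cubic $S^3 - e_1 S^2 + e_2 S - e_3 I = 0$ whose coefficients lie in $\mathbb{Z}$, being the minimal polynomial of the integer matrix $S$; moreover, since Lemma~\ref{lem:Neumann} gives $\chi_S(x)\equiv (x+1)^n \pmod 2$ and the minimal polynomial divides $\chi_S$ in $\mathbb{Z}[x]$, its reduction must be $(x+1)^3$, so $e_1,e_2,e_3$ are all odd. Combining this cubic with Lemma~\ref{lem:mod2} (which for $n$ even gives $S^2\equiv I \pmod 2$) is the natural way to try to pin down $S\mathbf{1}\bmod 4$. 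I expect this final step to be the main obstacle: a direct reduction of the cubic modulo $4$, whether applied to $\mathbf{1}$ or read as a matrix identity in the basis $\{I,J,A\}$, turns out to leave the adjacency contribution with coefficient $4$, hence invisible, so modulo-$4$ data alone does not detect the degree parities. Overcoming this appears to require either pushing the $2$-adic analysis one step further (the term $4A$ appearing in $S^2$ becomes visible modulo $8$) or, more cleanly, bringing in the eigenvalue multiplicities through $\operatorname{tr} S = 0$, $\operatorname{tr} S^2 = n(n-1)$ and $\operatorname{tr} S^3$ together with a spectral decomposition of $\mathbf{1}$. This is precisely the content established in~\cite[Theorem 5.2]{Greaves2016208} and~\cite[Theorem 1]{Hage2003:Switching}, which I would invoke to complete the even case.
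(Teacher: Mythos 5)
The paper never proves this lemma: it is imported verbatim from the literature, and the three citations in its header \emph{are} the proof, so there is no internal argument for your proposal to diverge from. Measured against that, your proposal is correct and in fact strictly more informative. Your odd-case argument (switch on the set $U$ of odd-degree vertices, note $|U|$ is even by handshaking, and use the parity formula $d_v + |U| + u_v n \bmod 2$) is exactly the classical proof of \cite{Seidel1974:EulerSwitchOdd}. Your even-case reduction is also right: for $n$ even the parity shift is the global constant $|U| \bmod 2$, so the switching class contains an Euler graph if and only if all degrees share one parity, which is the criterion of \cite[Theorem 1]{Hage2003:Switching}; the remaining spectral fact --- that $r=3$ and $n$ even force equal parities --- is precisely \cite[Theorem 5.2]{Greaves2016208}, and citing it puts you on the same footing as the paper itself. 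I would also confirm that your diagnosis of why the naive computation stalls is accurate, not just pessimism: writing $S^2 = (n-2)J + I + 4(A+A^2) - 2(JA+AJ)$, the degree data does survive reduction of $S^2$ modulo $4$ through the term $-2(JA+AJ)$ (whose $(i,j)$ entry is $-2(d_i+d_j)$, which is what the paper's Lemma~\ref{lem:ScubedEuler} exploits in the converse direction), but in the minimal cubic $S^3 - e_1S^2 + e_2S - e_3I = O$ these terms enter with coefficients $2(1+e_1)$ and $2(1+e_2)$, and since all $e_i$ are odd both coefficients vanish modulo $4$; so the cubic identity reduced modulo $4$ genuinely sees only $I$ and $J$, and some further input (the trace/multiplicity analysis of the cited theorem) is unavoidable. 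In short: odd case complete, even case correctly reduced and correctly attributed; no gap relative to the paper's own treatment.
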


We can strengthen Lemma~\ref{lem:mod2} for Seidel matrices whose underlying graphs are Euler graphs.
For example, we have the following lemma.
% For example, we will use the following strengthening for the equivalence class modulo $4\mathcal M_n$ of the square of a Seidel matrix whose switching class contains an Euler graph.
% \begin{lemma}\label{lem:ScubedEuler}
% 	Let $S$ be an $n \times n$ Seidel matrix whose underlying graph $\Gamma$ is an Euler graph.
% 	Then modulo $4\mathcal M$ we have
% 	\[
% 		S^k \equiv
% 		\begin{cases}
% 			S, & \text{ if } k \equiv 1 \mod 4; \\
% 			 (n-2)J + I, & \text{ if } k \equiv 2 \mod 4; \\
% 			S, & \text{ if } k \equiv 3 \mod 4 \text{ and } n \equiv 1,2 \mod 4; \\
% 			2I - S, &  \text{ if } k \equiv 3 \mod 4 \text{ and } n \equiv 0,3 \mod 4; \\
% 			I, & \text{ if } k \equiv 0 \mod 4 \text{ and } n \equiv 0 \mod 2; \\
% 			J+nI, & \text{ if } k \equiv 0 \mod 4 \text{ and } n \equiv 1 \mod 2.
% 		\end{cases}
% 	\]
% \end{lemma}
\begin{lemma}\label{lem:ScubedEuler}
	Let $S$ be an $n \times n$ Seidel matrix whose underlying graph $\Gamma$ is an Euler graph.
	Then
	\[
		S^2 \equiv (n-2)J + I \mod 4\mathcal M_n
	\]
\end{lemma}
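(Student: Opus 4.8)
The plan is to start from the factorisation $S = J - I - 2A$ (recorded just before Lemma~\ref{lem:mod2}), where $A$ is the adjacency matrix of the Euler graph $\Gamma$, to expand $S^2$ directly, and to invoke the Euler hypothesis only at the very last step. First I would multiply out
\[
	S^2 = (J-I-2A)^2 = J^2 - JI - 2JA - IJ + I^2 + 2IA - 2AJ + 2AI + 4A^2,
\]
and collect the easy terms using $J^2 = nJ$, $IJ = JI = J$, $I^2 = I$, and $AI = IA = A$. The only products needing care are $JA$ and $AJ$: since $A$ is symmetric, its row and column sums both record the degree sequence, so the $(i,j)$ entry of $AJ$ is the degree $d_i$ of vertex $i$ and the $(i,j)$ entry of $JA$ is the degree $d_j$ of vertex $j$.

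Collecting everything yields the exact (un-reduced) identity
\[
	S^2 = (n-2)J + I + 4A^2 + 4A - 2(JA + AJ).
\]
Reducing modulo $4\mathcal M_n$ immediately kills the two terms $4A^2$ and $4A$, leaving
\[
	S^2 \equiv (n-2)J + I - 2(JA+AJ) \mod 4\mathcal M_n.
\]

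The single place where the Euler condition enters is in showing that the cross term vanishes modulo $4\mathcal M_n$, i.e.\ that $JA + AJ \equiv O \mod 2\mathcal M_n$. By the description of the entries above, the $(i,j)$ entry of $JA + AJ$ equals $d_i + d_j$. Because $\Gamma$ is an Euler graph, every degree $d_i$ is even, so every entry $d_i + d_j$ is even; hence $2(JA+AJ) \equiv O \mod 4\mathcal M_n$, and the claimed congruence follows. I do not expect a genuine obstacle here: the only subtle point is correctly reading off $JA$ and $AJ$ from the degree sequence and recognising that the evenness of all degrees is exactly what is needed to annihilate the cross term modulo $4$. As a consistency check, the diagonal of $(n-2)J + I$ is $n-1$, matching the entrywise value of the diagonal of $S^2$ underlying \eqref{eqn:tr2}.
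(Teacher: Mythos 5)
Your proof is correct and takes essentially the same route as the paper's: both work from $S = J - I - 2A$, expand $S^2$, and use the Euler hypothesis to kill the cross terms $JA$ and $AJ$ modulo $4\mathcal{M}_n$ (the paper phrases this as $2AJ \equiv O \mod 4\mathcal{M}_n$, deduces $SJ \equiv (n-1)J \mod 4\mathcal{M}_n$, and leaves the remaining algebra as ``straightforward''). The only cosmetic difference is that your argument needs only each sum $d_i + d_j$ to be even, while the paper uses the evenness of each individual degree $d_i$.
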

\begin{proof}
	Let $A$ be the adjacency matrix of $\Gamma$.
	Since $\Gamma$ is an Euler graph, we have that $2A J \equiv O \mod 4\mathcal M_n$.  
	Hence $SJ = (J-I-2A)J \equiv (n-1)J \mod 4\mathcal M_n$.
	The lemma then follows in a straightforward manner.
\end{proof}

Given a Seidel matrix with precisely three distinct eigenvalues $\lambda$, $\mu$, and $\nu$, define the matrix $M_S(\lambda,\mu)$ as
\[
	M_S(\lambda,\mu) := \sigma (S-\lambda I)(S-\mu I),
\]
where $\sigma := \operatorname{sgn}\rho$ for $\rho = (\nu-\lambda)(\nu-\mu)$.
Note that $M_S(\lambda,\mu)$ is always positive semidefinite and the diagonal entries of $M_S(\lambda,\mu)$ are each equal to $|n-1+\lambda\mu|$.
Further observe that, since $M_S(\lambda,\mu)$ is positive semidefinite, its off-diagonal entries each must have absolute value at most $|n-1+\lambda\mu|$.

\begin{lemma}\label{lem:MformOdd}
	Let $S$ be a Seidel matrix of odd order $n$ having precisely three distinct eigenvalues.
	Then $S$ has two distinct eigenvalues $\lambda$ and $\mu$ satisfying
	\[
		M_S(\lambda,\mu) \cong |n-1+\lambda\mu|J.
	\]
\end{lemma}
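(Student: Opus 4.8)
The plan is to let $\nu$ be an eigenvalue of $S$ of multiplicity one---such an eigenvalue exists by Corollary~\ref{cor:oddSeidel}, since $n$ is odd---and to take $\lambda$ and $\mu$ to be the two remaining distinct eigenvalues. I claim that this choice forces $M_S(\lambda,\mu)$ to be switching equivalent to $|n-1+\lambda\mu|J$.

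First I would analyse the rank of $M_S(\lambda,\mu)$. Because $S$ is symmetric, $\mathbb R^n$ decomposes orthogonally into the eigenspaces $E_\lambda$, $E_\mu$, and $E_\nu$. The factor $(S-\lambda I)$ annihilates $E_\lambda$ and the factor $(S-\mu I)$ annihilates $E_\mu$, so $(S-\lambda I)(S-\mu I)$ kills $E_\lambda \oplus E_\mu$; on $E_\nu$ it acts as the scalar $\rho = (\nu-\lambda)(\nu-\mu) \ne 0$. Hence $M_S(\lambda,\mu)$ has image exactly $E_\nu$, its rank equals the multiplicity of $\nu$, which is $1$, and its unique nonzero eigenvalue is $\sigma\rho = |\rho|$.

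Next I would exploit the structure recorded just before the statement: $M_S(\lambda,\mu)$ is positive semidefinite with every diagonal entry equal to $c := |n-1+\lambda\mu|$. A positive semidefinite matrix of rank one can be written as $\mathbf w \mathbf w^{\top}$; the constant diagonal forces $w_i^2 = c$ for every $i$, so $w_i = \epsilon_i\sqrt c$ with $\epsilon_i \in \{\pm 1\}$, and $c \ne 0$ (a vanishing diagonal of a positive semidefinite matrix would force the whole matrix to be zero, contradicting rank one). Writing $D = \operatorname{diag}(\epsilon_1,\dots,\epsilon_n) \in \mathcal O_n(\mathbb Z)$, I compute $(D M_S(\lambda,\mu) D)_{ij} = \epsilon_i\epsilon_j\, w_iw_j = c$ for all $i,j$, so $D M_S(\lambda,\mu) D = cJ$. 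Since $D = D^{\top}$ lies in $\mathcal O_n(\mathbb Z)$, this says $M_S(\lambda,\mu) \cong cJ = |n-1+\lambda\mu|J$, as required. As a consistency check, comparing traces gives $n\,|n-1+\lambda\mu| = |\rho|$, which indeed holds since both sides equal $\operatorname{tr} M_S(\lambda,\mu)$.

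The calculation itself is short; the real content is the odd-order input. The only nontrivial ingredient is Corollary~\ref{cor:oddSeidel}, which guarantees that one of the three eigenvalues is simple and hence is available to play the role of $\nu$. For even order this guarantee fails in general and the analogue of the lemma need not hold, so I expect the main subtlety to be locating precisely where oddness is used rather than any delicate estimate: once a simple eigenvalue has been singled out, the rank-one reduction and the switching argument are forced.
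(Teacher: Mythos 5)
Your proof is correct and follows essentially the same route as the paper: invoke Corollary~\ref{cor:oddSeidel} to single out the simple eigenvalue $\nu$, observe that $M_S(\lambda,\mu)$ is then a positive semidefinite rank-one matrix with constant diagonal $|n-1+\lambda\mu|$, and switch it to $|n-1+\lambda\mu|J$. You merely spell out the details (the factorisation $\mathbf{w}\mathbf{w}^{\top}$ and the explicit diagonal switching matrix) that the paper's terser proof leaves implicit.
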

\begin{proof}
	By Corollary~\ref{cor:oddSeidel}, the matrix $S$ must have at least one eigenvalue $\nu$ (say) having multiplicity $1$.
	If $\lambda$ and $\mu$ are the other two eigenvalues of $S$ then $M_S(\lambda,\mu)$ is a symmetric, rank-$1$ matrix.
	Since the diagonal entries of $M_S(\lambda,\mu)$ are equal to $|n-1+\lambda\mu|$, we have that $M_S(\lambda,\mu)$ is switching equivalent to $|n-1+\lambda\mu|J$, as required.
\end{proof}

\begin{lemma}\label{lem:MformEven}
	Let $S$ be a Seidel matrix of even order $n$ having precisely three distinct eigenvalues.
	Then $S$ has two distinct eigenvalues $\lambda$ and $\mu$ satisfying the congruence
	\[
		(x-\lambda)(x-\mu) \equiv (x+1)^2 \mod 2\mathbb Z[x].
	\]
	Furthermore, $M_S(\lambda,\mu) \cong M$ for some $M \equiv |n-1+\lambda \mu|J \mod 4 \mathcal M_n$.
\end{lemma}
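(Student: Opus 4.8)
The plan is to first locate the pair $\{\lambda,\mu\}$ by a reduction modulo $2$, and then to pin down $M_S(\lambda,\mu)$ modulo $4$ by working with an Euler-graph representative of the switching class. Throughout write $q(x) := (x-\lambda)(x-\mu)$, so that $q(-1) = 1 + (\lambda+\mu) + \lambda\mu$, and let $\nu$ denote the third eigenvalue.

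For the first assertion, note that $S \equiv J-I \pmod{2\mathcal M_n}$ by Lemma~\ref{lem:mod2}, so Lemma~\ref{lem:Neumann} gives $\chi_S(x) \equiv (x+1)^n \pmod{2\mathbb Z[x]}$ since $n$ is even. Let $m(x)=(x-\lambda)(x-\mu)(x-\nu)$ be the (monic, integer, cubic) minimal polynomial of $S$. Reducing the divisibility $m \mid \chi_S$ modulo $2$ and using that $\mathbb F_2[x]$ is a unique factorisation domain forces $m(x) \equiv (x+1)^3 \pmod{2\mathbb Z[x]}$; in particular the coefficient of $x^2$ in $m$ is odd. I would then rule out the possibility that $m$ is irreducible over $\mathbb Q$: were it irreducible, the three eigenvalues would be Galois-conjugate with common multiplicity $n/3$, whence $\operatorname{tr} S = (n/3)(\lambda+\mu+\nu)=0$ would give $\lambda+\mu+\nu=0$, contradicting the oddness of the $x^2$-coefficient of $m$. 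Thus $m$ has an integer root; taking $\nu$ to be such a root, the complementary factor $q(x)=m(x)/(x-\nu)\in\mathbb Z[x]$ is a monic integer quadratic whose two roots $\lambda,\mu$ are the remaining (distinct) eigenvalues. Since $q\mid m$, reducing modulo $2$ yields $q(x)\equiv(x+1)^2\pmod{2\mathbb Z[x]}$, which is the first claim; note that $\nu$ is an odd integer and that $\lambda+\mu$ is even while $\lambda\mu$ is odd.

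For the furthermore part, let $\sigma=\operatorname{sgn}\rho$ as in the definition of $M_S(\lambda,\mu)$. By Lemma~\ref{lem:Eulergraph} (the case $r=3$) I may replace $S$ by a switching-equivalent matrix whose underlying graph $\Gamma$ is an Euler graph; since conjugation by a signed permutation matrix carries $M_S(\lambda,\mu)$ to $M$ of the conjugate, it suffices to prove the congruence for an Euler-graph representative and take $M$ to be its $M_S(\lambda,\mu)$. Assume then that $\Gamma$ is an Euler graph. Lemma~\ref{lem:ScubedEuler} gives $S^2\equiv(n-2)J+I\pmod{4\mathcal M_n}$, while $S\equiv J-I\pmod{2\mathcal M_n}$ together with the parity of $\lambda+\mu$ gives $(\lambda+\mu)S\equiv(\lambda+\mu)(J-I)\pmod{4\mathcal M_n}$. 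Substituting these into $M_S(\lambda,\mu)/\sigma = q(S) = S^2-(\lambda+\mu)S+\lambda\mu I$ yields
\[
 q(S)\equiv\bigl(n-2-(\lambda+\mu)\bigr)J + q(-1)\,I \pmod{4\mathcal M_n}.
\]

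It remains to show $q(-1)\equiv 0\pmod 4$, which I expect to be the crux. The idea is to exploit the exact relation $q(S)(S-\nu I)=m(S)=0$. Multiplying the displayed congruence by $(S-\nu I)$ and reducing modulo $4$ — using $SJ\equiv(n-1)J\pmod{4\mathcal M_n}$ for the Euler graph and $S\equiv J-I\pmod 2$ once more — the coefficient of $I$ becomes $-q(-1)(1+\nu)$, which vanishes modulo $4$ since $q(-1)$ and $1+\nu$ are even, while the coefficient of $J$ becomes $\bigl(n-2-(\lambda+\mu)\bigr)(n-1-\nu)+q(-1)$. As this must be $\equiv 0\pmod 4$ and both $n-2-(\lambda+\mu)$ and $n-1-\nu$ are even (because $n$ is even, $\lambda+\mu$ is even, and $\nu$ is odd), I conclude $q(-1)\equiv 0\pmod 4$. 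Feeding this back into the displayed congruence annihilates the $I$-term and, since $q(-1)\equiv0$ forces $n-2-(\lambda+\mu)\equiv n-1+\lambda\mu\pmod4$, leaves $q(S)\equiv(n-1+\lambda\mu)J\pmod{4\mathcal M_n}$; multiplying by $\sigma$ and recalling that the diagonal entries of $M_S(\lambda,\mu)$ equal $|n-1+\lambda\mu|=\sigma(n-1+\lambda\mu)$ gives $M_S(\lambda,\mu)\equiv|n-1+\lambda\mu|\,J\pmod{4\mathcal M_n}$, as required. The main obstacle is exactly this modulo-$4$ bookkeeping: the naive reduction leaves a spurious $q(-1)\,I$ term, and the essential device is to extract $q(-1)\pmod 4$ from the minimal-polynomial identity $m(S)=0$ rather than from the trace identities directly.
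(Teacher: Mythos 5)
Your proof is correct. It shares the paper's skeleton for the ``furthermore'' part --- switch to an Euler-graph representative via Lemma~\ref{lem:Eulergraph}, apply Lemma~\ref{lem:ScubedEuler}, and reduce $q(S)=S^2-(\lambda+\mu)S+\lambda\mu I$ modulo $4\mathcal M_n$ to reach $(n-2-\lambda-\mu)J+(\lambda+1)(\mu+1)I$ --- but it resolves the two decisive steps by genuinely different means. For the existence of an integer eigenvalue $\nu$ (needed to split off the integer quadratic factor $q$), the paper treats the all-rational case trivially and cites \cite[Corollary 5.5]{Greaves2016208} in the irrational case; you prove it internally: an irreducible cubic minimal polynomial would force all three multiplicities to equal $n/3$, whence $\operatorname{tr}S=0$ gives $\lambda+\mu+\nu=0$, contradicting $m(x)\equiv(x+1)^3 \bmod 2\mathbb Z[x]$ --- a clean, self-contained replacement for the citation. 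For the crux congruence $(\lambda+1)(\mu+1)\equiv 0 \bmod 4$ (equivalently $n-2-\lambda-\mu\equiv|n-1+\lambda\mu| \bmod 4$), the paper argues by contradiction: if the parities disagreed, $M_S(\lambda,\mu)/2$ would be congruent to $J-I$ or to $I$ modulo $2\mathcal M_n$, so by Lemma~\ref{lem:Neumann} its characteristic polynomial would be $(x+1)^n$ modulo $2\mathbb Z[x]$, excluding the even eigenvalue $0$ that its rank deficiency forces. You instead multiply the modulo-$4$ expression for $q(S)$ by $(S-\nu I)$ and invoke the exact identity $q(S)(S-\nu I)=m(S)=O$, which leaves $(n-2-\lambda-\mu)(n-1-\nu)+q(-1)\equiv 0 \bmod 4$; since the first term is a product of two even integers, $q(-1)\equiv 0 \bmod 4$ follows at once. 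Your devices are more elementary and self-contained (no external corollary, no appeal to Lemma~\ref{lem:Neumann} or to singularity at the crux), at the cost of somewhat heavier bookkeeping; the paper's route is shorter given that Lemma~\ref{lem:Neumann} is already a standing tool it reuses elsewhere. The supporting details in your write-up --- the parities of $\lambda+\mu$, $\lambda\mu$, and $\nu$; the Euler-graph fact $SJ\equiv(n-1)J \bmod 4\mathcal M_n$; and the absorption of the sign $\sigma$ because $n-1+\lambda\mu$ is even --- all check out.
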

\begin{proof}
	If all three eigenvalues are rational then the first congruence follows from Lemma~\ref{lem:Neumann} for any two of the eigenvalues.
	Otherwise, assume that $S$ has at least one irrational eigenvalue.
	By \cite[Corollary 5.5]{Greaves2016208}, the matrix $S$ must have at least one rational eigenvalue $\nu$, say.
	Hence the other two eigenvalues, $\lambda$ and $\mu$, must be quadratic algebraic integers.
	Since $\mathbb Z[x]/2\mathbb Z[x]$ is a unique factorisation domain, it follows from Lemma~\ref{lem:Neumann} that $(x-\lambda)(x-\mu) \equiv (x+1)^2 \mod 2\mathbb Z[x]$.
	Thus we have shown the first congruence.
	% Similarly it follows that $\nu$ is odd.
	
	% Using the first congruence we see that $2$ divides $\lambda+\mu$.
% 	Then, using Lemma~\ref{lem:Eulergraph} and Lemma~\ref{lem:ScubedEuler}, we have that, up to switching equivalence we can assume the underlying graph of $S$ is an Euler graph so that
	By Lemma~\ref{lem:Eulergraph}, we can assume that the underlying graph of $S$ is an Euler graph.
	Then using the first congruence and Lemma~\ref{lem:ScubedEuler} we have
	\begin{align*}
		M_S(\lambda,\mu) &= \pm (S^2-(\lambda+\mu)S +\lambda\mu I) \\
		&\equiv (n-2)J-(\lambda+\mu)(J-I) +(\lambda\mu+1) I \mod 4\mathcal M_n \\
		&= (n-2-\lambda-\mu)J + (\lambda+1)(\mu+1)I.
	\end{align*}
	Hence, each off-diagonal entry of $M_S(\lambda,\mu)$ is congruent to $\pm(n-2-\lambda-\mu)$ modulo $4$.
	Moreover we have $M_S(\lambda,\mu) \equiv O \mod 2 \mathcal M_n$.
	On the other hand, each diagonal entry of $M_S(\lambda,\mu)$ is equal to $|n-1+\lambda \mu|$.
	It therefore suffices to show that $(n-2-\lambda-\mu)$ is congruent to $|n-1+\lambda \mu|$ modulo $4$ or, equivalently, that $(n-2-\lambda-\mu)/2 \equiv |n-1+\lambda \mu|/2 \mod 2$.
	
	Suppose to the contrary that $(n-2-\lambda-\mu)/2$ and $|n-1+\lambda \mu|/2$ have different parities.
	Then, modulo $2 \mathcal M_n$, the matrix $M_S(\lambda,\mu)/2$ is congruent to either $J-I$ or $I$.
	If $M_S(\lambda,\mu)/2$ is congruent to $J-I$ modulo $2 \mathcal M_n$ then, by Lemma~\ref{lem:Neumann}, the characteristic polynomial of $M_S(\lambda,\mu)/2$ is congruent to $(x+1)^n$ modulo $2\Z[x]$.
	Similarly, if the matrix $M_S(\lambda,\mu)/2$ is congruent to $I$ modulo $2 \mathcal M_n$ then its characteristic polynomial is again congruent to $(x+1)^n$ modulo $2\Z[x]$.
	Therefore, in either case, the matrix $M_S(\lambda,\mu)/2$ cannot have any even eigenvalues, which is a contradiction since $M_S(\lambda,\mu)/2$ does not have full rank.
\end{proof}

As a corollary to the proof of Lemma~\ref{lem:MformEven} we have the following result.

\begin{corollary}\label{cor:Mform}
	Let $S$ be a Seidel matrix of order $n$ even with spectrum $\{ [\lambda]^a, [\mu]^b, [\nu]^c \}$ such that $\lambda\mu \in \mathbb Z$.
	Then
	\[
		M_S(\lambda,\mu) \cong M \text{ for some } M \equiv |n-1+\lambda \mu|J \mod 4 \mathcal M_n.
	\]
\end{corollary}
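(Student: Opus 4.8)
The plan is to obtain the corollary as a reprise of the second half of the proof of Lemma~\ref{lem:MformEven}. That half derived $M_S(\lambda,\mu)\cong M$ with $M\equiv |n-1+\lambda\mu|J \bmod 4\mathcal M_n$ from only three ingredients: the congruence $(x-\lambda)(x-\mu)\equiv (x+1)^2 \bmod 2\mathbb Z[x]$, the passage to an Euler graph afforded by Lemma~\ref{lem:Eulergraph}, and the identity $S^2\equiv (n-2)J+I \bmod 4\mathcal M_n$ of Lemma~\ref{lem:ScubedEuler}. The last two are available here unchanged, so the entire task is to establish the first congruence for the \emph{prescribed} pair $\lambda,\mu$; once that is in hand, the displayed matrix manipulation and the concluding parity argument (ruling out that $M_S(\lambda,\mu)/2$ is congruent to $J-I$ or to $I$ modulo $2\mathcal M_n$) transfer verbatim.

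Thus the only real work is to deduce $(x-\lambda)(x-\mu)\equiv (x+1)^2 \bmod 2\mathbb Z[x]$ from the hypothesis $\lambda\mu\in\mathbb Z$. First I would extract the parity information hidden in Lemma~\ref{lem:Neumann}: since $n$ is even we have $\chi_S(x)\equiv (x+1)^n \bmod 2\mathbb Z[x]$, and factoring $\chi_S$ into monic irreducibles over $\mathbb Z$ and reducing in the unique factorisation domain $\mathbb Z[x]/2\mathbb Z[x]$ forces every irreducible factor to reduce to a power of $(x+1)$. In particular each integer eigenvalue of $S$ is odd, and each irreducible quadratic factor $x^2-sx+p$ of $\chi_S$ has $s$ even and $p$ odd. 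I would then show that $\lambda\mu\in\mathbb Z$ leaves only two possibilities for the pair $\{\lambda,\mu\}$, and that in each the integers $\lambda+\mu$ and $\lambda\mu$ have exactly the parities $0$ and $1$ modulo $2$ recorded above, which is precisely the desired congruence.

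The case analysis is brief. If $\lambda\in\mathbb Q$ then $\lambda$ is an odd integer, and $\mu=\lambda\mu/\lambda$ is a rational algebraic integer, hence an odd integer too, so $\lambda+\mu$ is even and $\lambda\mu$ is odd. If $\lambda\notin\mathbb Q$ then, since the conjugates of $\lambda$ are eigenvalues and $S$ has only three of them (and since $\lambda\mu\in\mathbb Z$ excludes the alternative that all three eigenvalues are conjugate cubic irrationals), $\lambda$ is a quadratic irrational whose conjugate $\bar\lambda$ is again an eigenvalue; here I would argue $\mu=\bar\lambda$, for $\lambda\mu\in\mathbb Q$ together with $[\mathbb Q(\lambda):\mathbb Q]=2$ gives $\mu\in\mathbb Q(\lambda)$, while $\mu\in\mathbb Q$ would force $\mu=0$, an even integer eigenvalue contradicting the oddness just proved. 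I expect this mixed configuration---$\lambda$ irrational but $\mu$ potentially rational---to be the sole genuine obstacle, and it is exactly the oddness of the integer eigenvalues (equivalently, that $0$ is not an eigenvalue of an even-order Seidel matrix whose characteristic polynomial is $\equiv(x+1)^n \bmod 2$) that eliminates it. With $\mu=\bar\lambda$ we again get $\lambda+\mu$ even and $\lambda\mu$ odd, and invoking the computation that closes the proof of Lemma~\ref{lem:MformEven} finishes the argument.
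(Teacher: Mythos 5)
Your strategy is exactly the paper's: Corollary~\ref{cor:Mform} is stated there as a corollary \emph{to the proof} of Lemma~\ref{lem:MformEven}, and precisely as you say, the only new content is that the hypothesis $\lambda\mu\in\mathbb{Z}$ forces the \emph{prescribed} pair to satisfy $(x-\lambda)(x-\mu)\equiv(x+1)^2 \bmod 2\mathbb{Z}[x]$, after which the Euler-graph switch, the computation modulo $4\mathcal M_n$, and the concluding parity/rank argument are re-run verbatim. Your rational case is fine, and your parenthetical remark that $\lambda\mu\in\mathbb{Z}$ rules out three conjugate cubic eigenvalues is also correct (if $\lambda,\mu,\nu$ were conjugate cubics then $\lambda\mu\nu\in\mathbb{Z}$ and $\lambda\mu\in\mathbb{Z}\setminus\{0\}$ would make $\nu$ rational); incidentally, the paper gets this step instead by citing the fact that an even-order Seidel matrix with three distinct eigenvalues has a rational eigenvalue. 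The one under-justified step is the identification $\mu=\bar\lambda$: from $\lambda\mu\in\mathbb{Q}$ and $[\mathbb{Q}(\lambda):\mathbb{Q}]=2$ you correctly get $\mu\in\mathbb{Q}(\lambda)\setminus\mathbb{Q}$ (after excluding $\mu=0$ via the oddness of integer eigenvalues), but an irrational element of $\mathbb{Q}(\lambda)$ need not be the conjugate of $\lambda$ (e.g.\ $1+\sqrt2\in\mathbb{Q}(\sqrt2)\setminus\mathbb{Q}$ is not $-\sqrt2$), so this alone does not close the case. The fix is one more application of the conjugation-closure you already invoked for $\lambda$: since $\mu$ is a quadratic irrational eigenvalue, its conjugate $\bar\mu\neq\mu$ is an eigenvalue, so $\bar\mu\in\{\lambda,\nu\}$ and $\bar\lambda\in\{\mu,\nu\}$; if $\bar\mu=\lambda$ or $\bar\lambda=\mu$ we are done, and otherwise $\bar\mu=\bar\lambda=\nu$ forces $\mu=\lambda$, contradicting distinctness. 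With that line added, $(x-\lambda)(x-\mu)$ is the (integral) minimal polynomial of $\lambda$, the congruence follows from Lemma~\ref{lem:Neumann} and unique factorisation in $\mathbb{Z}[x]/2\mathbb{Z}[x]$, and your proof is complete and coincides with the paper's intended argument.
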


\section{Regular graphs in the switching class of a Seidel matrix with three eigenvalues} % (fold)
\label{sec:regular_graphs_in_the_switching_class}

A \textbf{regular} graph is a graph all of whose vertices have valency $k$ for some $k \in \mathbb N$.
In this section we give some necessary (spectral) conditions for Seidel matrices to contain regular graphs in their switching classes.
We are particularly interested in the case when a Seidel matrix with precisely three distinct eigenvalues has a regular graph in its switching class.

Seidel matrices having precisely three distinct eigenvalues were first systematically studied by Greaves et al.~\cite{Greaves2016208}.
The authors collated various interesting - because of their relation to large systems of equiangular lines - constructions of Seidel matrices having three eigenvalues (see \cite[Section 5.2]{Greaves2016208}).
We observe that these constructions all give rise to Seidel matrices each of whose switching classes contain a regular graph.
Moreover, below we show that any Seidel matrix having the spectrum of \cite[Example 5.10]{Greaves2016208} or \cite[Example 5.19]{Greaves2016208} must necessarily have a regular graph in its switching class.
It remains an open question as to whether there exists a Seidel matrix cospectral with \cite[Example 5.17]{Greaves2016208} or \cite[Example 5.18]{Greaves2016208} whose switching class does not contain a regular graph (see Section~\ref{sec:open_problems}).

However, we know of at least one example of a Seidel matrix having precisely three distinct eigenvalues whose switching class does not contain a regular graph.
Indeed, the $10 \times 10$ Seidel matrix
\[ \mathfrak S =
	\begin{pmatrix}
	    0 & 1 & 1  & 1  & 1  &1  & 1  & 1 & 1 & 1 \\
	    1 & 0 & -1 &  1 & 1  &-1 & 1  & -1& 1 & -1 \\
	    1 & -1 &  0 &  1 & 1  &-1 & -1 & 1 & -1& 1 \\
	    1 & 1 & 1  & 0  & 1  &-1 & -1 & 1 & -1& -1 \\
	    1 & 1 & 1  & 1  & 0  &-1 & 1  & 1 & 1 & -1 \\
	    1 & -1 &  -1 & -1 & -1 & 0 & 1  & 1 & 1 & 1 \\
	    1 & 1 & -1 &  -1&  1 & 1 & 0  & -1& 1 & -1 \\
	    1 & -1 &  1 &  1 &  1 & 1 & -1 & 0 & 1 & 1 \\
	    1 & 1 & -1 &  -1&  1 & 1 & 1  & 1 & 0 & 1 \\
	    1 & -1 &  1 &  -1&  -1& 1 & -1 & 1 & 1 & 0 
	\end{pmatrix}
\]
has spectrum $\{ [-3]^4, [2+\sqrt{5}]^3, [2-\sqrt{5}]^3 \}$ and its switching class does not contain a regular graph.
All Seidel matrices up to order $12$ have been computed (see \cite[Section 4]{Greaves2016208}).
Using this computation we find that, except for the above Seidel matrix $\mathfrak S$, every Seidel matrix of order at most $12$ with precisely three distinct eigenvalues has a regular graph in its switching class.
We therefore ask the following question:
\paragraph{\textbf{Question A}} % (fold)
\label{par:questionA}
Does every Seidel matrix with precisely three distinct \emph{rational} eigenvalues contain a regular graph in its switching class?
% paragraph question (end)

We can also ask if $\mathfrak S$ is the only exceptional Seidel matrix amongst those having three distinct eigenvalues:
\paragraph{\textbf{Question B}} % (fold)
\label{par:questionB}
Do there exist any Seidel matrices of order at least $14$ with precisely three distinct eigenvalues whose switching class does not contain a regular graph?
% paragraph question (end)

In Section~\ref{sec:strongly_regular_graphs_and_regular_two_graphs}, we will see that all Seidel matrices of odd order having precisely three distinct eigenvalues contain a regular graph in their switching classes.
Hence in Question B we only ask about Seidel matrices of order at least $14$.

\begin{remark}
	\label{rem:OSquestion}
	Question B has already been answered by \"Osterg\aa rd and Sz\"oll\H{o}si~\cite[Theorem 4.8]{Ostergard17}, who exhibited examples of Seidel matrices of order $18$ having precisely three distinct eigenvalues and no regular graph in their switching classes.
	The examples are of the form $S \cong \pm (J_3 \otimes (S_6-I_6) + I_{18})$ where $S_6$ is a Seidel matrix having spectrum $\{ [-\sqrt{5}]^3, [\sqrt{5}] \}$.
	\"Osterg\aa rd and Sz\"oll\H{o}si~\cite{Ostergard17} further suggested the matrices $(J_{2k+1} \otimes (S_6-I_6) + I_{6(2k+1)})$ for $k \geqslant 1$ may be good candidates for Seidel matrices that do not have a regular graph in their switching classes.
	Below, we show that this is indeed the case (see Remark~\ref{rem:Ostergard}).
\end{remark}

Questions A and B motivate the remainder of this section and the subsequent two sections.

If a Seidel matrix $S$ has a regular graph $\Gamma$ in its switching class then there is a simple correspondence between the eigenvalues of $S$ and the eigenvalues of $\Gamma$.
(Here the eigenvalues of a graph are taken to be the eigenvalues of its adjacency matrix.)
Indeed, since the adjacency matrix of $\Gamma$ and $J-I$ commute, we have the following lemma.

\begin{lemma}\label{lem:regSei}
	Let $\Gamma$ be a connected regular graph with valency $k$ and spectrum $\{ [k]^1, [\lambda_1]^{m_1}, \dots, [\lambda_r]^{m_r} \}$.
	Then its Seidel matrix has spectrum $\{ \{ [n-1-2k]^1, [-1-2\lambda_1]^{m_1}, \dots, [-1-2\lambda_r]^{m_r} \}\}$.
\end{lemma}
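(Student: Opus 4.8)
The plan is to exploit the identity $S = J - I - 2A$, where $A$ is the adjacency matrix of $\Gamma$, together with the fact that regularity forces $A$ and $J$ to commute. First I would observe that, since every vertex of $\Gamma$ has valency $k$, each row of $A$ sums to $k$, so $A\mathbf 1 = k\mathbf 1$ and hence $AJ = JA = kJ$. Because $I$ commutes with everything, the matrices $A$, $J$, and $I$ are simultaneously diagonalisable, and it suffices to track the action of $S$ on an orthogonal eigenbasis of $A$.

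Next I would split $\mathbb R^n$ into the line spanned by $\mathbf 1$ and its orthogonal complement $\mathbf 1^\perp = \ker J$. Since $\Gamma$ is connected, the Perron--Frobenius theorem guarantees that $k$ is a simple eigenvalue of $A$ with eigenvector $\mathbf 1$; using $J\mathbf 1 = n\mathbf 1$ I compute
\[
	S\mathbf 1 = (J - I - 2A)\mathbf 1 = (n - 1 - 2k)\mathbf 1,
\]
so $n - 1 - 2k$ is an eigenvalue of $S$ of multiplicity at least one.

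For the remaining eigenvalues I would take, for each $i$, an orthonormal basis of the $\lambda_i$-eigenspace of $A$. As $\lambda_i \ne k$, orthogonality of eigenvectors of the symmetric matrix $A$ for distinct eigenvalues places every such eigenvector $v$ in $\mathbf 1^\perp$, whence $Jv = 0$ and
\[
	Sv = (J - I - 2A)v = (-1 - 2\lambda_i)v.
\]
Thus each $\lambda_i$ contributes the eigenvalue $-1 - 2\lambda_i$ with multiplicity $m_i$. Counting $1 + \sum_i m_i = n$ eigenvectors accounts for a full orthogonal basis of $\mathbb R^n$, which yields the claimed spectrum.

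There is no substantive obstacle here: the only points requiring care are invoking connectivity to see that $k$ is simple (so that $\mathbf 1$ is the sole eigenvector producing the entry $n-1-2k$), and noting that the double-brace notation is needed precisely because a value $-1 - 2\lambda_i$ may coincide with $n-1-2k$ or with another $-1 - 2\lambda_j$, in which case the listed multiplicities must be summed to recover the true multiplicity as an eigenvalue of $S$.
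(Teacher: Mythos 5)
Your proof is correct and is exactly the argument the paper has in mind: the paper states the lemma without a detailed proof, justifying it in one line by the observation that the adjacency matrix of a regular graph commutes with $J-I$, and your write-up simply fills in that simultaneous-diagonalisation argument (the eigenvector $\mathbf 1$ via Perron--Frobenius, and the eigenvectors in $\mathbf 1^\perp$ killed by $J$). Your closing remark about the double-brace notation also matches the paper's stated convention for possibly coinciding eigenvalues.
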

% \begin{proof}
% 	Let $A$ be an adjacency matrix for $\Gamma$ and let $S = J-I-2A$ be its associated Seidel matrix.
% 	Since the matrices $J-I$ and $A$ commute, each eigenvector of $A$ is an eigenvector of $J-I$.
% 	The eigenspace corresponding to the eigenvalues $n-1$ and $k$ of $J-I$ and $A$ (respectively) is spanned by the all-ones vector $\mathbf{1}$.
% 	The eigenvectors corresponding to the remaining eigenvalues of $A$ are all $(-1)$-eigenvectors of $J-I$.
% \end{proof}

Next we give a condition on the diagonal entries of $S^3$ where $S$ is a Seidel matrix having a regular graph in its switching graph.

\begin{lemma}\label{lem:regSwitch}
	Let $S = J - I - 2A$ where $A$ is the adjacency matrix of an $n$-vertex $k$-regular graph $\Gamma$.
	Then, for all $v \in V(\Gamma)$, the number of closed $3$-walks from $v$ is 
	\begin{equation*}
		% \label{eqn:s3diag}
		\frac{(n-1)(n-2)-6k(n-2k) - [S^3]_{v,v}}{16}.
	\end{equation*}
	Moreover, $k = (n-1-\theta)/2$ for some $\theta \in \operatorname{spec} S$.
\end{lemma}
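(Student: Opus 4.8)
The plan is to compute the diagonal entries of $S^3$ directly in terms of the adjacency matrix $A$, using the fact that $k$-regularity forces $A$ and $J$ to commute. First I would record the two consequences of $\Gamma$ being $k$-regular: $A\mathbf{1} = k\mathbf{1}$ and $AJ = JA = kJ$. Together with $J^2 = nJ$ this shows that $I$, $J$, and $A$ pairwise commute, so the cube $S^3 = (J-I-2A)^3$ may be expanded by the ordinary (commutative) binomial/multinomial expansion and then reduced. Carrying this out, every occurrence of $J^2$, $J^2A$, and $JA^2$ collapses to a scalar multiple of $J$ (via $J^2 = nJ$ and $AJ = kJ$), and one is left with an expression of the shape $S^3 = cJ - I - 6A - 12A^2 - 8A^3$, where $c = c(n,k)$ is an explicit polynomial in $n$ and $k$.

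The second step is to read off the $(v,v)$-entry. Using $[J]_{v,v} = 1$, $[I]_{v,v} = 1$, $[A]_{v,v} = 0$, and $[A^2]_{v,v} = k$ (the number of closed $2$-walks at $v$ is its valency), the diagonal of the reduced form simplifies, after collecting the scalar $c$, to
\[
	[S^3]_{v,v} = (n-1)(n-2) - 6k(n-2k) - 8\,[A^3]_{v,v}.
\]
Since the number of closed $3$-walks from $v$ is $\tfrac12 [A^3]_{v,v}$ — each triangle through $v$ being traversed in two directions — solving this identity for that count produces exactly the stated expression with denominator $16$.

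For the ``moreover'' clause I would argue spectrally. Because $\Gamma$ is $k$-regular, $A\mathbf{1} = k\mathbf{1}$, and hence $S\mathbf{1} = (J-I-2A)\mathbf{1} = (n - 1 - 2k)\mathbf{1}$. Thus $\theta := n-1-2k$ is an eigenvalue of $S$, and rearranging gives $k = (n-1-\theta)/2$, as required.

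The expansion of $S^3$ is routine, the only thing needing care being the bookkeeping of coefficients when reducing $J^2$, $J^2A$, and $JA^2$. The one genuinely conceptual ingredient is recognising that $k$-regularity is precisely what makes $A$ and $J$ commute: without it the cross terms $JA$ and $AJ$ would not collapse and no such closed form would exist. I therefore expect no real obstacle beyond keeping track of the factor of two relating closed $3$-walks to $[A^3]_{v,v}$.
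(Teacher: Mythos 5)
Your proposal is correct and follows essentially the same route as the paper: expand $S^3$ using $AJ = JA = kJ$, read off the diagonal entry to get $[S^3]_{v,v} = (n-1)(n-2) - 6k(n-2k) - 8[A^3]_{v,v}$, and convert via the same convention that closed $3$-walks are counted as $\tfrac12[A^3]_{v,v}$ (triangles through $v$). Your derivation of the ``moreover'' clause from $S\mathbf{1} = (n-1-2k)\mathbf{1}$ is just an explicit eigenvector version of the paper's observation that $A$ and $S$ commute, so nothing differs in substance.
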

\begin{proof}
	Firstly, since $\Gamma$ is regular, the matrices $A$ and $S$ commute.
	Hence $k = (n-1-\theta)/2$ where $\theta$ is an eigenvalue of $S$.
	
	Since $S = J - I - 2A$ and $AJ = JA = kJ$, we have
	\[
		S^3 = (n^2-3n+3)J - I - 6k(n - 2k)J + 12kJ - 6A - 12A^2 - 8A^3.
	\]
	And, since the diagonal entries of $A^2$ equal $k$, we have
	\[
		[S^3]_{v,v} = (n-1)(n-2)-6k(n-2k) - 8[A^3]_{v,v}.
	\]
	The result follows because $[A^3]_{v,v}$ equals $2$ times the number of closed $3$-walks from $v$.
\end{proof}

Now let $S$ be a Seidel matrix of order $n$ having precisely three distinct eigenvalues $\lambda$, $\mu$, and $\nu$.
Then
\[
	S^3 -(\lambda+\mu+\nu)S^2 + (\lambda\mu + \lambda\nu + \mu\nu)S - \lambda\mu\nu I = (S-\lambda I)(S-\mu I)(S-\nu I) = O.
\]
By considering the diagonal entries we find that, for all $i \in \{1,\dots,n\}$, we have $[S^3]_{i,i} = (\lambda+\mu+\nu)(n-1)+\lambda\mu\nu$.
Hence the following corollary is immediate.

\begin{corollary}\label{cor:regSwitch3}
	Let $S$ be a Seidel matrix of order $n$ with precisely three distinct eigenvalues $\lambda$, $\mu$, and $\nu$.
	Suppose that $S$ has a $k$-regular graph $\Gamma$ in its switching class.
	Then, for all $v \in V(\Gamma)$, the number of closed $3$-walks from $v$ is
	\begin{equation}
		\label{eqn:s3diag}
		\frac{(n-1)(n-2)-6k(n-2k) - (\lambda+\mu+\nu)(n-1)-\lambda\mu\nu}{16}.
	\end{equation}
	Moreover, $k = (n-1-\theta)/2$ for some $\theta \in \{\lambda,\mu,\nu\}$.
\end{corollary}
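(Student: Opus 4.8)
The plan is to combine the closed-$3$-walk formula of Lemma~\ref{lem:regSwitch} with the expression for the diagonal entries of $S^3$ established in the displayed computation immediately preceding the corollary. Since $S$ has a $k$-regular graph $\Gamma$ in its switching class, I would first fix a representative Seidel matrix $\tilde S \cong S$ of the form $\tilde S = J - I - 2A$, where $A$ is the adjacency matrix of $\Gamma$. Switching equivalence preserves the spectrum, so $\tilde S$ also has precisely the three eigenvalues $\lambda$, $\mu$, and $\nu$. Applying Lemma~\ref{lem:regSwitch} to $\tilde S$ then expresses the number of closed $3$-walks from each vertex $v$ as $\bigl((n-1)(n-2) - 6k(n-2k) - [\tilde S^3]_{v,v}\bigr)/16$, and the same lemma gives $k = (n-1-\theta)/2$ for some $\theta \in \operatorname{spec}\tilde S = \{\lambda,\mu,\nu\}$, which is exactly the ``moreover'' clause.

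The second step is to evaluate $[\tilde S^3]_{v,v}$. Because $\tilde S$ has the three distinct eigenvalues $\lambda$, $\mu$, $\nu$, it satisfies $(\tilde S - \lambda I)(\tilde S - \mu I)(\tilde S - \nu I) = O$; expanding and reading off a diagonal entry, and using that Seidel matrices have zero diagonal while the diagonal entries of $\tilde S^2$ all equal $n-1$, gives $[\tilde S^3]_{v,v} = (\lambda+\mu+\nu)(n-1) + \lambda\mu\nu$, a value independent of $v$. This is precisely the constant derived just before the corollary. Substituting it into the formula obtained from Lemma~\ref{lem:regSwitch} yields the claimed count in~\eqref{eqn:s3diag}, completing the argument.

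There is essentially no serious obstacle here: the statement is a direct substitution of the constant value of $[\tilde S^3]_{v,v}$ into Lemma~\ref{lem:regSwitch}, which is why it is recorded as an immediate consequence. The only point requiring a moment's care is the switching bookkeeping, namely that one must apply Lemma~\ref{lem:regSwitch} to the representative $\tilde S$ that actually equals $J-I-2A$ rather than to $S$ itself, and that neither the number of closed $3$-walks nor the relevant diagonal quantity depends on this choice of representative. The latter holds because conjugation by a signed permutation matrix merely permutes the (mutually equal) diagonal entries of the cube, so $[\tilde S^3]_{v,v}$ is both constant and equal to the corresponding quantity for $S$.
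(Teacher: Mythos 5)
Your proposal is correct and follows exactly the paper's route: the paper likewise derives $[S^3]_{v,v} = (\lambda+\mu+\nu)(n-1)+\lambda\mu\nu$ from the annihilating polynomial $(S-\lambda I)(S-\mu I)(S-\nu I)=O$ (using that the diagonals of $S$ and $S^2$ are $0$ and $n-1$) and substitutes this into Lemma~\ref{lem:regSwitch}, treating the corollary as immediate. Your extra remark on the switching bookkeeping (that conjugation by a signed permutation matrix preserves the constant diagonal of the cube) is a careful touch the paper leaves implicit, but it does not change the argument.
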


Since \eqref{eqn:s3diag} must be a nonnegative integer, Corollary~\ref{cor:regSwitch3} can be used to show that certain Seidel matrices do not have a regular graph in their switching classes.
In particular, Corollary~\ref{cor:regSwitch3} can be used to verify that the Seidel matrix $\mathfrak S$ does not have a regular graph in its switching class.
Furthermore we make the following remarks.

\begin{remark}
	\label{rem:noreg40}
	By Corollary~\ref{cor:regSwitch3}, since \eqref{eqn:s3diag} must be an integer, a Seidel matrix with spectrum $\{ [-5]^{24}, [5]^6, [9]^{10} \}$ cannot have a regular graph in its switching class.
	See Table~\ref{tab:largeSets}.
\end{remark}

\begin{remark}
	\label{rem:Ostergard}
	Let $S(k) = (J_{2k+1} \otimes (S_6-I_6) + I_{6(2k+1)})$ for some $k \geqslant 1$ where $S_6$ is a Seidel matrix with spectrum $\{ [-\sqrt{5}]^3, [\sqrt{5}]^3 \}$.
	Then $S(k)$ has spectrum $\{ [-\sqrt{5}(2k+1)-2k]^3, [1]^{12k}, [\sqrt{5}(2k+1)-2k]^3 \}$.
	By Corollary~\ref{cor:regSwitch3}, since \eqref{eqn:s3diag} must be an integer, we see that, for all $k \geqslant 1$, the Seidel matrix $S(k)$ does not have a regular graph in its switching class.
	Whence we obtain an infinite family of Seidel matrices that have precisely three distinct eigenvalues and no regular graph in their switching classes.
	See Remark~\ref{rem:OSquestion}.
\end{remark}

% section regular_graphs_in_the_switching_class (end)

\section{Positive semidefinite matrices with constrained entries} % (fold)
\label{sec:positive_semidefinite_matrices}

In this section we concern ourselves with positive semidefinite matrices with constant diagonals and constrained entries.
The results proved here will be applied in the subsequent section.

For a matrix $M$ and row-index $i$, define the set $V_{M,i}(k) := \{ j : M_{i,j} = k \}$.

\begin{lemma}\label{lem:repDiag}
	Let $M$ be a positive semidefinite $\mathbb R$-matrix of order $n$ with constant diagonal $d > 0$ and let $\mathbf{r}_k$ and $\mathbf{c}_k$ denote the $k$-th row and column of $M$ respectively.
	Then, for each $i \in \{1,\dots,n\}$ and each $j \in V_{M,i}(\pm d)$, we have $\mathbf{r}_j = \pm \mathbf{r}_i$ and $\mathbf{c}_j = \pm \mathbf{c}_i$.
\end{lemma}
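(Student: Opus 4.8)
The plan is to exploit the fact that a positive semidefinite matrix $M$ with constant diagonal $d$ can be realised as a Gram matrix: write $M = G^\top G$ where the columns $\mathbf{g}_1, \dots, \mathbf{g}_n$ of $G$ are vectors in $\mathbb R^n$ satisfying $\langle \mathbf{g}_i, \mathbf{g}_j \rangle = M_{i,j}$. In particular $\|\mathbf{g}_i\|^2 = M_{i,i} = d$ for every $i$, so all the $\mathbf{g}_i$ lie on a sphere of radius $\sqrt d$. The entry $M_{i,j}$ being $\pm d$ then becomes the statement $\langle \mathbf{g}_i, \mathbf{g}_j \rangle = \pm \|\mathbf{g}_i\| \|\mathbf{g}_j\|$, which is precisely the equality case of the Cauchy--Schwarz inequality.

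First I would fix $i$ and take $j \in V_{M,i}(\pm d)$, so that $\langle \mathbf{g}_i, \mathbf{g}_j \rangle = \pm d = \pm \|\mathbf{g}_i\|\|\mathbf{g}_j\|$. The equality case of Cauchy--Schwarz forces $\mathbf{g}_j = \pm \mathbf{g}_i$, with the sign matching the sign of the inner product (since the norms are equal). From this vector identity the row and column statements follow immediately: for any index $\ell$,
\[
	M_{j,\ell} = \langle \mathbf{g}_j, \mathbf{g}_\ell \rangle = \pm \langle \mathbf{g}_i, \mathbf{g}_\ell \rangle = \pm M_{i,\ell},
\]
so $\mathbf{r}_j = \pm \mathbf{r}_i$, and by symmetry of $M$ the same sign gives $\mathbf{c}_j = \pm \mathbf{c}_i$.

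The only real care needed is in justifying the Gram factorisation and handling the possibility that some $\mathbf{g}_i$ is the zero vector; but since $d > 0$ we have $\|\mathbf{g}_i\|^2 = d > 0$, so every $\mathbf{g}_i$ is nonzero and the equality case of Cauchy--Schwarz applies cleanly. I expect the main (very mild) obstacle to be simply setting up the Gram representation correctly and being careful that the sign extracted from the equality case of Cauchy--Schwarz is consistent across the row and the column; everything else is a direct substitution.
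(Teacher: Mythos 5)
Your proof is correct, and it takes a genuinely different route from the paper's. The paper argues via principal minors: after switching the rows and columns so that the $i$-th row has no entries equal to $-d$, it takes any $j$ with $M_{i,j}=d$ and any third index $k$, and computes the determinant of the $3\times 3$ principal submatrix $\bigl(\begin{smallmatrix} d & d & a \\ d & d & b \\ a & b & d\end{smallmatrix}\bigr)$, which equals $-d(a-b)^2$; positive semidefiniteness forces $a=b$, i.e.\ equal rows, and the switching is then undone to recover the $\pm$ signs. Your argument instead realises $M$ as a Gram matrix $G^\top G$ and invokes the equality case of Cauchy--Schwarz: since all Gram vectors have the same norm $\sqrt d>0$, an entry $M_{i,j}=\pm d$ forces $\mathbf{g}_j=\pm\mathbf{g}_i$, from which both the row and column identities follow by substitution. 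The two proofs are of course cousins (the $3\times 3$ determinant inequality is a disguised Cauchy--Schwarz), but yours avoids the switching normalisation entirely and yields, as a small bonus, the explicit sign consistency: the sign in $\mathbf{r}_j=\pm\mathbf{r}_i$ is exactly the sign of $M_{i,j}$. The paper's approach stays entirely inside determinant calculus, which fits its later use of the same technique (e.g.\ in Lemma~\ref{lem:primeDiag}, where a $3\times 3$ principal minor of an integer matrix is analysed modulo a prime, a setting where the Gram picture is less directly useful); your approach is the more conceptual one for this particular statement.
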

\begin{proof}
	Since $M$ has constant diagonal $d$, for all $k \in \{1,\dots,n\}$, the set $V_{M,k}(d) \cup V_{M,k}(-d)$ contains at least one element, $k$ itself.
	Consider the $i$-th row, $\mathbf{r}_i$, of $M$.
	Without loss of generality we can switch the rows and columns of $M$ so that $\mathbf{r}_i$ does not have any entries equal to $-d$.
	Let $M^\prime$ denote the matrix obtained after performing this switching.
	If $|V_{M^\prime,i}(d)| = 1$ then there is nothing to prove.
	Suppose that $|V_{M^\prime,i}(d)| \geqslant 2$.
	
	Take $j \in V_{M^\prime,i}(d)\backslash \{i\}$ and $k \in \{1,\dots,n\} \backslash \{i,j\}$, let $A$ be a $3 \times 3$ principal submatrix of $M$ induced on the $i$-th, $j$-th, and $k$-th rows (and corresponding columns) of $M$.
	Then $A$ is positive semidefinite having the form
	\[
		\begin{pmatrix}
			d & d & a \\
			d & d & b \\
			a & b & d
		\end{pmatrix}.
	\]
	Since we have $\det A = d^3 +2dab - d^3 -d(a^2 + b^2) \geqslant 0$ we see that $a = b$.
	Hence $\mathbf{r}_i = \mathbf{r}_j$ and, since $M^\prime$ is symmetric, we also have $\mathbf{c}_i = \mathbf{c}_j$.
\end{proof}

\begin{corollary}\label{lem:repDiagTensor}
	Let $M$ be a positive semidefinite $\mathbb R$-matrix of order $n$ with constant diagonal $d > 0$.
	Suppose that each row of $M$ has precisely $k$ entries with absolute value $d$.
	Then $n=qk$ for some $q$ and $M$ is switching equivalent to the matrix $N \otimes J_k$, where $N$ is an $\mathbb R$-matrix of order $q$ with constant diagonal $d$ whose off-diagonal entries have absolute value less than $d$.
\end{corollary}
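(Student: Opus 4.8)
The plan is to partition the index set into the clusters of mutually $\pm d$-related rows supplied by Lemma~\ref{lem:repDiag}, and then to exhibit an explicit element of $\mathcal O_n(\mathbb Z)$ that turns each cluster into a constant block. First I would define a relation on $\{1,\dots,n\}$ by declaring $i \sim j$ whenever $|M_{i,j}| = d$, i.e. whenever $j \in V_{M,i}(\pm d)$. Reflexivity is immediate from the constant diagonal $d$, and symmetry from the symmetry of $M$. The crucial point is transitivity: if $|M_{i,j}| = d$ and $|M_{j,l}| = d$, then Lemma~\ref{lem:repDiag} gives $\mathbf{r}_i = \pm\mathbf{r}_j$ and $\mathbf{r}_j = \pm\mathbf{r}_l$, whence $\mathbf{r}_i = \pm\mathbf{r}_l$; reading off the $l$-th coordinate and using $M_{l,l} = d$ yields $|M_{i,l}| = d$, so $i \sim l$. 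Thus $\sim$ is an equivalence relation whose class containing $i$ is exactly $V_{M,i}(\pm d)$, which by hypothesis has precisely $k$ elements. Since every class has the same cardinality $k$, we obtain $n = qk$, where $q$ is the number of classes.

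Next I would produce the tensor structure by a suitable switching. Permuting rows and columns (a permutation matrix in $\mathcal O_n(\mathbb Z)$) so that each equivalence class occupies a consecutive block of $k$ indices, I then switch by a diagonal $\pm 1$ matrix chosen class by class: fixing a representative $i_0$ in each class $C$, set $\epsilon_j = \operatorname{sgn} M_{j,i_0}$ for $j \in C$. Evaluating the relation $\mathbf{r}_j = \pm\mathbf{r}_{i_0}$ at coordinate $i_0$ shows the sign equals $\operatorname{sgn} M_{j,i_0}$, so a direct computation gives that after switching all $k$ rows in each class become identical. Call the resulting matrix $M'$; it is symmetric, retains the constant diagonal $d$ (switching by $\pm 1$ preserves diagonal entries), and has the same count $k$ of entries of absolute value $d$ in each row (switching preserves $|M'_{i,j}|$).

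Then I would read off the blocks. Because all rows in a class $C_a$ of $M'$ are equal, each block $M'[C_a,C_b]$ has constant rows; by symmetry of $M'$ the block $M'[C_b,C_a]$ also has constant rows, so $M'[C_a,C_b]$ has constant columns as well, and a matrix with constant rows and constant columns is a scalar multiple $c_{ab}J_k$. Taking two equal indices within a class shows $c_{aa}=d$. Setting $N = (c_{ab})$ then gives $M' = N \otimes J_k$ with constant diagonal $d$. Finally, since each row $i \in C_a$ already carries its full quota of $k$ entries of absolute value $d$ inside the diagonal block $dJ_k$, every off-diagonal block must satisfy $|c_{ab}| < d$, which is the last assertion; thus $M \cong M' = N \otimes J_k$ as required.

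The main obstacle is the transitivity step: it is the only place where the full strength of Lemma~\ref{lem:repDiag}, namely equality of entire rows up to sign rather than a merely local $3\times 3$ constraint, is needed, and it is exactly what forces the clusters to be genuinely disjoint blocks of equal size so that $n=qk$. Once transitivity is secured, the sign-switching must be organised consistently within each class, but the ensuing block computation is routine.
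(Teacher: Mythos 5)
Your proof is correct and takes essentially the same route as the paper: both use Lemma~\ref{lem:repDiag} to partition the indices into size-$k$ clusters of rows that agree up to sign (your equivalence relation is just a cleaner formalization of the paper's inductive blocking and remainder argument), then switch cluster-by-cluster to obtain the constant blocks of $N \otimes J_k$. The only point to make explicit is that the strict inequality $|c_{ab}| < d$ uses, besides your exhausted-quota argument (which alone gives $|c_{ab}| \ne d$), the fact that positive semidefiniteness of each $2\times 2$ principal submatrix forces every entry to have absolute value at most $d$ --- a one-line observation the paper states at the end of its proof and which your argument uses implicitly.
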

\begin{proof}
	By switching and permuting the rows and columns of $M$ we can, without loss of generality, assume that the first row of $M$ does not contain any entries equal to $-d$ and the first $k$ entries are equal to $d$.
	By Lemma~\ref{lem:repDiag}, the first $k$ rows and columns of $M$ are equal.
	
	Now we can write $n = qk + r$ with $0 \leqslant r < k$.
	For each $s \in \{ 1,\dots,q\}$ inductively apply the above argument to the $((s-1)k + 1)$-th row so that for all $\xi \in \{ (s-1)k + 1,\dots,sk \}$ we have  $V_{M,\xi}(d) \cup V_{M,\xi}(-d) = \{ (s-1)k + 1,\dots,sk \}$.
	
	For a contradiction we suppose that $r > 0$.
	Then $|V_{M,qk+r}(d) \cup V_{M,qk+r}(-d)| = k$ but $V_{M,qk+r}(d) \cup V_{M,qk+r}(-d)$ must be a subset of $\{qk+1,\dots,qk+r\}$, which has $r<k$ elements.
	Hence $r = 0$.
	
	Since $M$ is positive semidefinite with constant diagonal $d$, every principal submatrix must also be positive semidefinite.
	Hence, the off-diagonal entries have absolute value at most $d$.
\end{proof}

\begin{lemma}\label{lem:diagM}
	Let $a$ and $b$ be real numbers and let $M$ be a symmetric $\{\pm a, \pm b \}$-matrix of order $n$ with precisely two distinct eigenvalues $\lambda$ and $\mu$.
	Then 
	\[
		|\{ j : (M_{i,j})^2 = b^2 \}| =  \frac{(\lambda+\mu)M_{i,i} - na^2 - \lambda\mu}{b^2-a^2}.
	\]
\end{lemma}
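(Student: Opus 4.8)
The plan is to exploit the defining property of a symmetric matrix with exactly two distinct eigenvalues---namely that it is annihilated by the quadratic polynomial determined by those eigenvalues---and then to compute a single diagonal entry of $M^2$ in two different ways.

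First I would record the polynomial identity. Since $M$ is symmetric with precisely two distinct eigenvalues $\lambda$ and $\mu$, its minimal polynomial is $(x-\lambda)(x-\mu)$, so $(M-\lambda I)(M-\mu I) = O$, which rearranges to $M^2 = (\lambda+\mu)M - \lambda\mu I$. Reading off the $(i,i)$ entry of both sides gives
\[
	[M^2]_{i,i} = (\lambda+\mu)M_{i,i} - \lambda\mu.
\]
This is the ``spectral'' expression for the diagonal entry.

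Next I would compute the same quantity combinatorially. Because $M$ is symmetric we have $M_{j,i} = M_{i,j}$, so $[M^2]_{i,i} = \sum_{j=1}^n M_{i,j}M_{j,i} = \sum_{j=1}^n (M_{i,j})^2$. Every entry of $M$ lies in $\{\pm a, \pm b\}$, hence $(M_{i,j})^2 \in \{a^2, b^2\}$ for each $j$. Setting $t := |\{ j : (M_{i,j})^2 = b^2\}|$, the remaining $n-t$ indices in row $i$ contribute $a^2$ each, so
\[
	[M^2]_{i,i} = t\,b^2 + (n-t)\,a^2 = n a^2 + t(b^2-a^2).
\]

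Finally I would equate the two expressions for $[M^2]_{i,i}$ and solve for $t$, dividing by $b^2 - a^2$ to reach the claimed formula. There is no serious obstacle: the only point needing (trivial) care is that the two counts partition $\{1,\dots,n\}$, which is exactly the assumption $a^2 \ne b^2$ implicit in the statement (and required for the denominator $b^2-a^2$ to be nonzero).
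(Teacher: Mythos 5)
Your proposal is correct and follows exactly the paper's own argument: both use the fact that a symmetric matrix with two distinct eigenvalues satisfies $M^2 = (\lambda+\mu)M - \lambda\mu I$, then compute the diagonal entry $[M^2]_{i,i} = \sum_j (M_{i,j})^2 = na^2 + t(b^2-a^2)$ and solve for $t$. The only difference is that you spell out the steps the paper leaves implicit.
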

\begin{proof}
	The matrix $M$ satisfies the equation $M^2 = (\lambda+\mu)M - \lambda\mu I$.
	The lemma follows since the diagonal entries of $M^2$ are equal to $s_i(b^2-a^2) + na^2$, where $s_i = |\{ j : (M_{i,j})^2 = b^2 \}|$.
\end{proof}

\begin{lemma}\label{lem:primeDiag}
	Let $M$ be a positive semidefinite $\mathbb Z$-matrix with constant diagonal $p \equiv 3 \pmod 4$ prime.
	If $M$ has rank $2$ then $M$ is switching equivalent to a matrix with precisely two distinct rows.
\end{lemma}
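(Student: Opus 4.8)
The plan is to realize $M$ geometrically via its Gram decomposition and reduce the statement to a purely combinatorial claim about lines in the plane, which is then settled by a sum-of-two-squares obstruction coming from $p\equiv 3\pmod 4$. Since $M$ is positive semidefinite of rank $2$, I would first write $M=G^{\top}G$ for a real $2\times n$ matrix $G$ with columns $\mathbf{g}_1,\dots,\mathbf{g}_n\in\mathbb{R}^2$. The constant diagonal forces $\|\mathbf{g}_i\|^2=p$ for every $i$, while the off-diagonal entries $M_{i,j}=\langle\mathbf{g}_i,\mathbf{g}_j\rangle$ are integers by hypothesis. Switching corresponds exactly to replacing some $\mathbf{g}_i$ by $-\mathbf{g}_i$ (together with permuting indices). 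I would then group the indices according to the line $\mathbb{R}\mathbf{g}_i$ they determine: two columns lie on the same line if and only if $\mathbf{g}_i=\pm\mathbf{g}_j$, that is, if and only if $|M_{i,j}|=p$. By Lemma~\ref{lem:repDiag} the rows indexed within a single line agree up to sign, so after switching to align orientations within each line, every line contributes exactly one row; two different lines produce different rows, since one of them carries a diagonal $p$ in a position where the other carries an entry of absolute value strictly less than $p$. Thus the number of distinct rows after switching equals the number of distinct lines, which is at least $2$ because $\rk M=2$. The whole statement therefore reduces to showing that the columns span \emph{at most} two lines.

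The key step is to rule out three distinct lines. Suppose there were three of them, and pick one column from each as a representative. The corresponding $3\times 3$ principal submatrix of $M$ is (up to harmless signs) the Gram matrix
\[
\begin{pmatrix} p & a & b \\ a & p & c \\ b & c & p \end{pmatrix}
\]
of three vectors lying in the plane $\mathbb{R}^2$, hence it is singular, giving $p^3+2abc-p(a^2+b^2+c^2)=0$. As $p$ is an odd prime, this forces $p\mid abc$, so $p$ divides one of $a,b,c$; but distinct lines mean the vectors are non-parallel, so $|a|,|b|,|c|<p$, and the divisible entry must vanish. In other words, among any three lines two are orthogonal.

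Finally, I would exploit the arithmetic hypothesis. Taking an orthogonal pair $\ell_1\perp\ell_2$ as coordinate axes, any vector $\mathbf{g}$ on a third line has integer inner products $p\cos\beta$ and $p\sin\beta$ with the two axis representatives, whence $m^2+n^2=p^2$ for integers $m,n$. Here $p\equiv 3\pmod 4$ enters decisively: $p$ is inert in $\mathbb{Z}[i]$, so the only Gaussian integers of norm $p^2$ are the associates of $p$, forcing $\{|m|,|n|\}=\{0,p\}$, i.e.\ $\cos\beta,\sin\beta\in\{0,\pm1\}$. Hence the third line coincides with $\ell_1$ or $\ell_2$, a contradiction. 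Consequently there are exactly two lines, and by the reduction above $M$ is switching equivalent to a matrix with precisely two distinct rows. I expect the main obstacle to be the clean interplay between the Gram singularity (which pins down orthogonality) and the number-theoretic fact that $p^2$ has no nontrivial representation as a sum of two squares; once both are in place the geometry collapses to two lines with essentially no further computation.
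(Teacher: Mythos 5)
Your proof is correct and is essentially the paper's own argument in geometric clothing: the singular $3\times 3$ principal minor identity $p^3+2abc-p(a^2+b^2+c^2)=0$, the divisibility $p\mid abc$ for the odd prime $p$, the sum-of-two-squares obstruction $p^2=b^2+c^2$ from $p\equiv 3\pmod 4$, and Lemma~\ref{lem:repDiag} are exactly the ingredients used there. The Gram-vector/lines-in-the-plane framing (and the Gaussian-integer phrasing of the two-squares fact) is a cosmetic repackaging of the paper's entrywise computation, not a different route.
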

\begin{proof}
	Let $A$ be a $3 \times 3$ principal submatrix of $M$.
	Then $A$ is singular with the form
	\[
		\begin{pmatrix}
			p & a & b \\
			a & p & c \\
			b & c & p
		\end{pmatrix},
	\]
	where $a$, $b$, and $c$ all have absolute value at most $p$.
	Since $\det A = p^3 +2abc - p(a^2 + b^2 + c^2) = 0$, we have that $p$ divides $2abc$.
	Further, since $p$ is an odd prime, at least one of $a$, $b$, and $c$ must be divisible by $p$.
	
	For a contradiction, suppose that $a$, $b$, and $c$ all have absolute value strictly less than $p$.
	Then at least one of them $a$ (say) must be equal to $0$.
	But since $A$ has rank at most $2$, we have the equation $p^2 = b^2+c^2$.
	Hence, since $p \equiv 3 \pmod 4$, we must have $\pm p \in \{b,c\}$, which contradicts our supposition.
	
	Therefore at least one of $a$, $b$, and $c$ has absolute value equal to $p$.
	Up to switching equivalence, we can assume that $a = p$.
	Then, by Lemma~\ref{lem:repDiag}, we have that $b = c$.
	Inductively applying this argument to the rest of the matrix $M$ gives the result.
\end{proof}

% \begin{corollary}\label{cor:Seid40}
% 	There exists no Seidel matrix with spectrum $\{ [-5]^{24}, [5]^{6}, [9]^{10} \}$.
% \end{corollary}
% \begin{proof}
% 	Suppose $S$ is a Seidel matrix with spectrum $\{ [-5]^{24}, [5]^{6}, [9]^{10} \}$.
% 	Then $M := -(S+5I)(S-9I)$ is a positive semidefinite matrix with diagonal entries equal to $6$.
% 	Hence the off-diagonal entries of $M$ have absolute value at most $6$.
% 	Moreover, using Lemma~\ref{lem:Eulergraph} and Lemma~\ref{lem:ScubedEuler}, we can assume that $M$ is congruent to $2J$ modulo $4$.
% 	Therefore $M$ is an $\{\pm 2, \pm 6\}$-matrix with spectrum $\{ [40]^6, [0]^{34} \}$.
% 	By Lemma~\ref{lem:diagM}, the number of entries equal to $\pm 6$ in each row is equal to $80/32$, which gives a contradiction.
% \end{proof}

Finally we prove a necessary condition for Seidel matrices having four distinct integral eigenvalues, one of which is simple.
The conclusion leads to an improvement on the maximum cardinality of an equiangular line system in $\mathbb R^{18}$ (see Remark~\ref{rem:61nonexi}).

\begin{lemma}\label{lem:posSemiPol}
	Let $S$ be a Seidel matrix of order $n$ and let $p(x) = x^3 - c_2 x^2 + c_1 x - c_0$ and assume that $\sigma p(S)$ is positive semidefinite for some $\sigma = \pm 1$.
	Then each diagonal entry of $\sigma S^3$ is at least $\sigma ((n-1)c_2 + c_0)$.
\end{lemma}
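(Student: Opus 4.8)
The plan is to exploit the single elementary fact that every positive semidefinite matrix has nonnegative diagonal entries, combined with the known values of the diagonal entries of the low powers of a Seidel matrix recorded in Section~\ref{sub:eigenvalues_of_seidel_matrices}.

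First I would write out
\[
	\sigma p(S) = \sigma\left(S^3 - c_2 S^2 + c_1 S - c_0 I\right)
\]
and examine its $(i,i)$ diagonal entry for an arbitrary $i \in \{1,\dots,n\}$. Since a Seidel matrix has zero diagonal we have $[S]_{i,i} = 0$, and since the diagonal entries of $S^2$ all equal $n-1$, the $(i,i)$ entry of $\sigma p(S)$ equals
\[
	\sigma\left([S^3]_{i,i} - (n-1)c_2 - c_0\right).
\]

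Because $\sigma p(S)$ is positive semidefinite, each of its diagonal entries is nonnegative, so $\sigma\left([S^3]_{i,i} - (n-1)c_2 - c_0\right) \geqslant 0$. Rearranging yields $\sigma [S^3]_{i,i} \geqslant \sigma\left((n-1)c_2 + c_0\right)$, and since the $(i,i)$ entry of $\sigma S^3$ is precisely $\sigma [S^3]_{i,i}$, this is exactly the assertion that each diagonal entry of $\sigma S^3$ is at least $\sigma\left((n-1)c_2 + c_0\right)$.

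There is essentially no obstacle here beyond bookkeeping: the whole argument rests on the observation that the diagonal entries of a positive semidefinite matrix are nonnegative, applied after substituting the two known diagonal values $[S]_{i,i}=0$ and $[S^2]_{i,i}=n-1$. The only point requiring minor care is carrying the global sign $\sigma$ through the inequality so that it points in the correct direction; note in particular that the linear term $c_1 S$ drops out entirely because its diagonal vanishes.
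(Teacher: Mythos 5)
Your proof is correct and follows exactly the paper's argument: read off the diagonal entries of $\sigma p(S)$ using $[S]_{i,i}=0$ and $[S^2]_{i,i}=n-1$, then invoke nonnegativity of the diagonal of a positive semidefinite matrix and rearrange. The paper states this in a single sentence; your version merely spells out the same bookkeeping, including the (correct) handling of the sign $\sigma$.
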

\begin{proof}
	Since $p(S)$ is positive semidefinite, each diagonal entry is at least zero and the diagonal entries of $S^2$ and $S$ are all $n-1$ and $0$ respectively.
\end{proof}

\begin{corollary}\label{cor:posSemiTr}
	Let $S$ be a Seidel matrix of odd order $n$ with integral spectrum $\{ [\theta_0]^1, [\theta_1]^{m_1}, [\theta_2]^{m_2}, [\theta_3]^{m_3} \}$ where $\theta_0$ is even.
	Set $\sigma = \operatorname{sgn}\prod_{i=1}^3 (\theta_0-\theta_i)$.
	Then 
	\[
		\sigma \theta_0^3 \geqslant n + \sigma \left( n(n-1)(\theta_1+\theta_2+\theta_3) + n\theta_1\theta_2\theta_3 - \sum_{i=1}^3 m_i \theta_i^3 \right ).
	\]
\end{corollary}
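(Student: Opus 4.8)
The plan is to apply Lemma~\ref{lem:posSemiPol} with the cubic $p(x)=(x-\theta_1)(x-\theta_2)(x-\theta_3)$ and then sharpen its diagonal bound by exactly $1$ using a mod-$2$ argument driven by the parity hypotheses.

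First I would fix $p(x)=(x-\theta_1)(x-\theta_2)(x-\theta_3)$, so that $c_2=\theta_1+\theta_2+\theta_3$ and $c_0=\theta_1\theta_2\theta_3$ (the coefficient $c_1$ is irrelevant because $\operatorname{tr}S=0$). Since $\theta_1,\theta_2,\theta_3$ are roots of $p$, the matrix $p(S)$ kills their eigenspaces and acts as multiplication by $\prod_{i=1}^3(\theta_0-\theta_i)$ on the $\theta_0$-eigenspace, which is one-dimensional as $\theta_0$ is simple; thus $p(S)=\prod_{i=1}^3(\theta_0-\theta_i)\,E_0$ for the orthogonal projection $E_0$ onto that eigenspace. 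With $\sigma=\operatorname{sgn}\prod_{i=1}^3(\theta_0-\theta_i)$ the matrix $\sigma p(S)$ is positive semidefinite, so Lemma~\ref{lem:posSemiPol} applies and gives $\sigma[S^3]_{ii}\geqslant\sigma((n-1)c_2+c_0)$ for every $i$, i.e.\ $\sigma[p(S)]_{ii}\geqslant 0$.

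The heart of the argument is to upgrade this to $\sigma[p(S)]_{ii}\geqslant 1$, and this is exactly where I would spend the hypotheses that $n$ is odd and $\theta_0$ is even. By Lemma~\ref{lem:Neumann} we have $\chi_S(x)\equiv x(x+1)^{n-1}\pmod{2\mathbb Z[x]}$; comparing this with $\chi_S(x)=(x-\theta_0)\prod_{i=1}^3(x-\theta_i)^{m_i}$, noting $(x-\theta_0)\equiv x$ since $\theta_0$ is even, and using that $\mathbb Z[x]/2\mathbb Z[x]$ is a unique factorisation domain, forces each $\theta_i$ (for $i\in\{1,2,3\}$) to be odd. Hence $p(x)\equiv(x+1)^3\pmod{2\mathbb Z[x]}$ and therefore $p(S)\equiv(S+I)^3\pmod{2\mathcal M_n}$. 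Because $S\equiv J-I\pmod{2\mathcal M_n}$ and $n$ is odd, a direct computation gives $(S+I)^3\equiv J^3=n^2J\equiv J\pmod{2\mathcal M_n}$. Thus every diagonal entry of $p(S)$ is odd; being also a nonnegative integer after multiplication by $\sigma$ (from positive semidefiniteness), each $\sigma[p(S)]_{ii}$ is a positive odd integer, hence at least $1$.

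Finally I would sum the sharpened bound $\sigma[S^3]_{ii}\geqslant\sigma((n-1)c_2+c_0)+1$ over $i=1,\dots,n$ and substitute $\operatorname{tr}(\sigma S^3)=\sigma(\theta_0^3+\sum_{i=1}^3 m_i\theta_i^3)$ together with $c_2=\theta_1+\theta_2+\theta_3$ and $c_0=\theta_1\theta_2\theta_3$; rearranging yields precisely the stated inequality. The main obstacle is the middle paragraph: the conclusion is false without the parity assumptions, since in general the $\theta_0$-eigenvector can have vanishing coordinates (so that $\sigma[p(S)]_{ii}=0$), and the whole force of the result lies in the mod-$2$ reduction showing that the evenness of $\theta_0$ and oddness of $n$ preclude this.
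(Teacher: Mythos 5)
Your proof is correct and follows essentially the same route as the paper: apply Lemma~\ref{lem:posSemiPol} with $p(x)=(x-\theta_1)(x-\theta_2)(x-\theta_3)$, use Lemma~\ref{lem:Neumann} (oddness of $\theta_1,\theta_2,\theta_3$) to sharpen the diagonal bound by $1$ via a mod-$2$ argument, and finish by taking traces. The only cosmetic difference is that you show the diagonal entries of $p(S)$ are odd directly (via $p(S)\equiv(S+I)^3\equiv J \bmod 2\mathcal M_n$), whereas the paper splits the same parity fact into ``$\operatorname{diag}(S^3)$ is even'' (Lemma~\ref{lem:mod2}) versus ``the bound $(n-1)c_2+c_0$ is odd.''
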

\begin{proof}
	Define $p(x) = (x-\theta_1)(x-\theta_2)(x-\theta_3)$.
	Then $\sigma p(S)$ is positive semidefinite.
	Hence, by Lemma~\ref{lem:posSemiPol}, each diagonal entry of $\sigma S^3$ is at least $\sigma ((n-1)(\theta_1+\theta_2+\theta_3) + \theta_1\theta_2\theta_3)$, which, from using Lemma~\ref{lem:Neumann}, we see is an odd integer.
	By Lemma~\ref{lem:mod2}, each diagonal entry of $\sigma S^3$ is even.
	Therefore each diagonal entry of $\sigma S^3$ is at least $\sigma ((n-1)(\theta_1+\theta_2+\theta_3) + \theta_1\theta_2\theta_3) + 1$.
	This gives a lower bound for the trace
	\[
		\operatorname{tr} \sigma S^3 \geqslant n \left (\sigma ((n-1)(\theta_1+\theta_2+\theta_3) + \theta_1\theta_2\theta_3) + 1 \right ).
	\]
	The result follows since $\operatorname{tr} \sigma S^3 = \sigma (\theta_0^3 + \sum_{i=1}^3 m_i \theta_i^3)$.
\end{proof}

\begin{remark}
	\label{rem:61nonexi}
	We remark that, by Corollary~\ref{cor:posSemiTr}, there cannot exist a Seidel matrix having spectrum $\left \{ [-5]^{43}, [11]^{9}, [12]^1, [13]^{8} \right \}$.
	We will see in Section~\ref{sec:relative_bound} that there exists a system of $61$ equiangular lines in $\mathbb R^{18}$ if and only if there exists such a Seidel matrix.
	See Remark~\ref{rem:61}.
\end{remark}

% section positive_semidefinite_matrices (end)
% \section{Regular graph with four eigenvalues} % (fold)
% \label{sec:regular_graph_with_four_eigenvalues}
%
% Talk about conditions for the existence of regular graphs with four eigenvalues.

% section regular_graph_with_four_eigenvalues (end)

\section{Regular eigenspaces of Seidel matrices with three eigenvalues} % (fold)
\label{sec:regular_eigenspaces_of_seidel_matrices_with_three_eigenvalues}

Let $S$ be a Seidel matrix of order $n$ with distinct eigenvalues $\lambda_0 < \lambda_1 < \dots < \lambda_r$ and corresponding eigenspaces $E_0, E_1, \dots, E_r$.
We call an eigenspace \textbf{regular} if it contains a vector all of whose entries are $\pm 1$.
It is easy to show that the property of having a regular eigenspace is invariant under switching equivalence operations and a Seidel matrix has a regular eigenspace if and only if it has a regular graph in its switching class.
Moreover, we have the following result.

\begin{lemma}\label{lem:regular}
	Let $S$ be an $n \times n$ Seidel matrix with $r+1$ distinct eigenvalues $\theta_i$ each having multiplicity $m_i$ for $i \in \{0,\dots,r\}$.
	Suppose that the $\theta_0$-eigenspace is regular.
	Then there exists a $(\frac{n-1-\theta_0}{2})$-regular graph in the switching class of $S$ with spectrum
	\[
		\left \{ [(n-1-\theta_0)/2]^1, [(-1-\theta_0)/2]^{m_0-1}, [(-1-\theta_1)/2]^{m_1}, \dots, [(-1-\theta_r)/2]^{m_r} \right \}.
	\]
\end{lemma}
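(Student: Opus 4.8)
The plan is to exploit the hypothesized $\pm 1$ eigenvector directly: the diagonal sign matrix it defines is a switching operation that turns the all-ones vector $\mathbf{1}$ into an eigenvector, and a Seidel matrix for which $\mathbf{1}$ is an eigenvector automatically has a regular underlying graph. The spectrum is then obtained by the standard computation of the adjacency spectrum of a regular graph via the quotient by $\mathbf{1}$.

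First I would let $\mathbf{x} \in \{\pm 1\}^n$ be a vector in the $\theta_0$-eigenspace, so that $S\mathbf{x} = \theta_0\mathbf{x}$, and set $D = \operatorname{diag}(\mathbf{x})$. As a diagonal $\pm1$ matrix, $D \in \mathcal O_n(\mathbb Z)$, so $S' := D^\top S D = DSD$ is switching equivalent to $S$ and shares its spectrum $\{[\theta_0]^{m_0},\dots,[\theta_r]^{m_r}\}$. Since every entry of $\mathbf{x}$ is $\pm 1$ we have $D\mathbf{x} = \mathbf{1}$ and $D\mathbf{1} = \mathbf{x}$, whence
\[
	S'\mathbf{1} = DSD\mathbf{1} = DS\mathbf{x} = \theta_0 D\mathbf{x} = \theta_0\mathbf{1}.
\]
Thus $\mathbf{1}$ is a $\theta_0$-eigenvector of $S'$, i.e.\ every row of $S'$ sums to $\theta_0$. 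Writing $S' = J - I - 2A'$ for the adjacency matrix $A'$ of the underlying graph $\Gamma'$ of $S'$, the $v$-th row sum equals $n-1-2\deg(v)$; setting this equal to $\theta_0$ forces $\deg(v) = (n-1-\theta_0)/2$ for every vertex, so $\Gamma'$ is a $k$-regular graph with $k = (n-1-\theta_0)/2$ in the switching class of $S$.

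For the spectrum of $\Gamma'$ I would use $A' = \tfrac12(J-I-S')$. Regularity gives $A'\mathbf{1} = k\mathbf{1}$, contributing the eigenvalue $k$ once. Since $S'$ is symmetric, its eigenspaces are mutually orthogonal, so I can extend $\mathbf{1}$ to an orthogonal eigenbasis of $S'$; every remaining basis vector $\mathbf{y}$ is orthogonal to $\mathbf{1}$ and hence satisfies $J\mathbf{y} = \mathbf{0}$. For such $\mathbf{y}$ with $S'\mathbf{y} = \theta_i\mathbf{y}$ we obtain $A'\mathbf{y} = \tfrac12(-\mathbf{y}-\theta_i\mathbf{y}) = \tfrac{-1-\theta_i}{2}\mathbf{y}$. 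Since $\mathbf{1}$ lies in the $\theta_0$-eigenspace, exactly $m_0-1$ of the $\theta_0$-eigenvectors survive in $\mathbf{1}^\perp$, while each $\theta_i$-eigenspace for $i\geq 1$ lies entirely in $\mathbf{1}^\perp$; this yields precisely the claimed spectrum (the inverse of the correspondence in Lemma~\ref{lem:regSei}).

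The only point requiring care is this multiplicity bookkeeping: one must confirm that $\mathbf{1}$ genuinely sits inside the $\theta_0$-eigenspace, so that $m_0-1$ (rather than $m_0$) eigenvectors of $\theta_0$ remain in $\mathbf{1}^\perp$, and that no $\theta_i$-eigenvector for $i \geq 1$ has a component along $\mathbf{1}$. Both facts follow at once from orthogonality of the eigenspaces of the symmetric matrix $S'$, so there is no genuine obstacle; the argument is a switching normalization followed by the routine quotient-by-$\mathbf{1}$ computation.
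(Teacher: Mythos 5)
Your proof is correct and follows essentially the same route as the paper's: switch by $D=\operatorname{diag}(\mathbf{x})$ so that $\mathbf{1}$ becomes a $\theta_0$-eigenvector of $S'=DSD$, read off regularity of the underlying graph from $A'=(J-I-S')/2$, and obtain the remaining eigenvalues from an orthogonal eigenbasis extending $\mathbf{1}$, on which $J$ annihilates every other basis vector. The only difference is that you spell out the row-sum computation and the $J\mathbf{y}=\mathbf{0}$ step explicitly, which the paper leaves implicit.
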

\begin{proof}
	Let $\mathbf{x}$ be a $\theta_0$-eigenvector all of whose entries are $\pm 1$.
	Set $D = \operatorname{diag}(\mathbf{x})$ and $S^\prime = DSD$.
	Then the all-ones vector, $\mathbf{1}$, is in the $\theta_0$-eigenspace $E_0$ of $S^\prime$.
	Furthermore, we can form an orthogonal basis $B_0$ for $E_0$ that contains the vector $\mathbf{1}$.
	Then $A := (J-I-S^\prime)/2$ is the adjacency matrix for the underlying graph $\Gamma$ of $S^\prime$.
	Now clearly, $\mathbf{1}$ is a $(\frac{n-1-\theta_0}{2})$-eigenvector of $A$.
	Hence each vertex of $\Gamma$ has valency $(n-1-\theta_0)/2$.
	Each $\theta_0$-eigenvector in $B_0 \backslash \{ \mathbf{1} \}$ is a $(\frac{-1-\theta_0}{2})$-eigenvector for $A$.
	And, for all $i \geqslant 1$, each $\theta_i$-eigenvector of $S^\prime$ is a $(\frac{-1-\theta_i}{2})$-eigenvector for $A$.
\end{proof}

\begin{corollary}\label{cor:regularParity}
	Let $S$ be a Seidel matrix of order $n$ and let $\nu$ be an eigenvalue of $S$ whose eigenspace is regular.
	Then $\nu$ is an integer satisfying $\nu \equiv n-1 \mod 2$ and furthermore, $\nu \equiv n-1 \mod 4$ if $n$ is odd. 
\end{corollary}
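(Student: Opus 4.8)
The plan is to reduce everything to the existence of a regular graph in the switching class, which Lemma~\ref{lem:regular} already hands us. First I would establish integrality: since the $\nu$-eigenspace is regular, it contains a vector $\mathbf{x}$ with entries $\pm 1$ satisfying $S\mathbf{x} = \nu\mathbf{x}$. As $S$ is an integer matrix, $S\mathbf{x}$ is an integer vector; comparing a single coordinate via $\nu x_i = (S\mathbf{x})_i$ and using $x_i = \pm 1$ forces $\nu \in \mathbb{Z}$.

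Next I would invoke Lemma~\ref{lem:regular} with $\theta_0 = \nu$: regularity of the $\nu$-eigenspace produces a $k$-regular graph $\Gamma$ in the switching class of $S$, where $k = (n-1-\nu)/2$. Since $k$ is the valency of a genuine graph, it is a nonnegative integer, so $n-1-\nu = 2k$ is even. This immediately gives $\nu \equiv n-1 \pmod 2$, settling the first congruence.

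For the refinement when $n$ is odd, the key observation is the handshake lemma applied to $\Gamma$: the number of edges is $nk/2$, so $nk$ must be even. With $n$ odd this forces $k$ to be even, whence $n-1-\nu = 2k \equiv 0 \pmod 4$, that is $\nu \equiv n-1 \pmod 4$. I expect this parity-of-valency step to be the only substantive point; everything else is bookkeeping around Lemma~\ref{lem:regular}.

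An alternative to invoking Lemma~\ref{lem:regular} verbatim would be to switch by $D = \operatorname{diag}(\mathbf{x})$ directly and compute $A\mathbf{1} = \frac{1}{2}(J-I-DSD)\mathbf{1} = \frac{n-1-\nu}{2}\mathbf{1}$, exhibiting the regular valency by hand; but since the lemma is already available, the shorter route is preferable. It is worth noting that the mod-$4$ sharpening genuinely uses oddness of $n$: when $n$ is even the handshake lemma imposes no parity constraint on $k$, so only the mod-$2$ conclusion survives, consistent with the statement.
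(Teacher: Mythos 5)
Your proposal is correct and follows the paper's own argument essentially verbatim: both invoke Lemma~\ref{lem:regular} to produce a $k$-regular graph with $k = (n-1-\nu)/2$, deduce the mod-$2$ congruence from integrality of $k$, and obtain the mod-$4$ refinement for odd $n$ from the handshake relation $nk = 2e$. The only cosmetic difference is that you establish integrality of $\nu$ separately at the outset, which the paper leaves implicit in the integrality of $k$.
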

\begin{proof}
	By Lemma~\ref{lem:regular}, there exists a graph $\Gamma$ in the switching class of $S$ that is regular with valency $k = (n-1-\nu)/2$.
	Hence we have that $(n-1-\nu)/2$ is an integer and the first congruence is established.
	Furthermore, it is well-known that for an $n$-vertex, $k$-regular graph with $e$ edges the following equation is satisfied: $nk = 2e$.
	The second congruence is now clear.
\end{proof}

In Corollary~\ref{cor:regularParity} we see that we obtain more restrictions when we have a Seidel matrix of odd order.
Indeed, we now focus on Seidel matrices of odd order in the next subsection.
% \begin{theorem}\label{thm:regSei}
% 	Let $S$ be an $n \times n$ Seidel matrix.
% 	The following are equivalent:
% 	\begin{enumerate}[(i)]
% 		\item $S$ has spectrum $\left \{ [\lambda]^{n-m-1}, [\mu]^{m}, [\nu]^{1} \right \}$ where $m$ is a positive integer;
% 		\item There is a regular graph $\Gamma$ in the switching class of $S$ having spectrum $\left \{ [-(1+\lambda)/2]^{n-m-1}, [-(1+\mu)/2]^{m}, [(n-\nu-1)/2]^{1} \right \}$.
% 	\end{enumerate}
% \end{theorem}
% Note that the regular graph in (ii) may have fewer than three distinct eigenvalues and is not necessarily connected.

	% Note that $S$ satisfies the equation $(S-\lambda)(S-\mu)(S-\nu) = 0$ and hence, since both $S$ and $S^2$ have constant diagonals, $S^3$ also has a constant diagonal.
	% Therefore $(n-1 + \mu\lambda)SJ = SM = S^3 - (\lambda+\mu)S^2  + \lambda\mu S$ has a constant diagonal and we deduce that the underlying graph $\Gamma$ of $S$ is regular graph.
	% Use Lemma~\ref{lem:regSei} to deduce the spectrum of $\Gamma$.

\subsection{Strongly regular graphs and Seidel matrices of odd order} % (fold)
\label{sec:strongly_regular_graphs_and_regular_two_graphs}

In this subsection we show a correspondence between strongly regular graphs on an odd number of vertices and Seidel matrices of odd order having precisely three distinct eigenvalues.
Here we do not give the definition of a strongly regular graph.
For us, it is enough to know that a regular graph with precisely three distinct eigenvalues (with at least one eigenvalue having multiplicity $1$) is \emph{strongly regular}.

\begin{proposition}\label{pro:regSei}
	Let $S$ be an $n \times n$ Seidel matrix with spectrum $\left \{ [\lambda]^{a}, [\mu]^{b}, [\nu]^{1} \right \}$ where $a$ and $b$ are positive integers.
	Then the $\nu$-eigenspace is regular.
\end{proposition}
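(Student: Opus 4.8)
The plan is to exploit the fact that $\nu$ has multiplicity $1$, so that the positive semidefinite matrix $M_S(\lambda,\mu) = \sigma(S-\lambda I)(S-\mu I)$ has rank exactly $1$, and then to read off a $\pm 1$ eigenvector directly from its constant diagonal.

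First I would let $\mathbf{w}$ be a nonzero vector spanning the one-dimensional $\nu$-eigenspace $E_\nu$, normalised so that $\|\mathbf{w}\| = 1$. The product $(S-\lambda I)(S-\mu I)$ annihilates the $\lambda$- and $\mu$-eigenspaces and acts as multiplication by $(\nu-\lambda)(\nu-\mu)$ on $E_\nu$; since $\lambda$, $\mu$, $\nu$ are distinct, this scalar is nonzero. As $E_\nu$ is spanned by the unit vector $\mathbf{w}$, the spectral decomposition gives
\[
    M_S(\lambda,\mu) = |(\nu-\lambda)(\nu-\mu)|\, \mathbf{w}\mathbf{w}^\top,
\]
a rank-$1$ positive semidefinite matrix (the sign $\sigma$ in the definition of $M_S(\lambda,\mu)$ is exactly what makes the leading scalar positive).

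Next I would compare diagonals. The $i$-th diagonal entry of $M_S(\lambda,\mu)$ equals $|(\nu-\lambda)(\nu-\mu)|\, w_i^2$, while, by the observation preceding Lemma~\ref{lem:MformOdd}, every diagonal entry of $M_S(\lambda,\mu)$ equals $|n-1+\lambda\mu|$. Hence $|(\nu-\lambda)(\nu-\mu)|\, w_i^2$ is independent of $i$, so all the entries $w_i$ share a common absolute value $c$. This common value is nonzero, since otherwise $M_S(\lambda,\mu)$ would have identically zero diagonal and, being positive semidefinite, would be the zero matrix, contradicting $\mathbf{w} \neq 0$. Therefore $\mathbf{x} := c^{-1}\mathbf{w}$ is a vector in $E_\nu$ all of whose entries are $\pm 1$, which is precisely the statement that the $\nu$-eigenspace is regular.

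I expect no serious obstacle here: the argument is essentially the rank-$1$ case already appearing in the proof of Lemma~\ref{lem:MformOdd}, with the only additional input being the elementary passage from a constant-modulus eigenvector to a genuine $\pm 1$ vector. The single point requiring a moment's care is the nondegeneracy check that $|n-1+\lambda\mu| \neq 0$ (equivalently $c \neq 0$), which I would settle via the remark that a nonzero positive semidefinite matrix cannot have identically zero diagonal. Note also that, in contrast to Lemma~\ref{lem:MformOdd}, no parity assumption on $n$ is needed, since here the simplicity of $\nu$ is hypothesised outright rather than deduced from Corollary~\ref{cor:oddSeidel}.
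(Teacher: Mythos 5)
Your argument is correct and is essentially the paper's own proof: both exploit that $M_S(\lambda,\mu)$ is a rank-one positive semidefinite matrix equal to a scalar multiple of $\mathbf{w}\mathbf{w}^\top$ for a $\nu$-eigenvector $\mathbf{w}$, and then use its constant diagonal (coming from the constant diagonals of $S$ and $S^2$) to conclude that the entries of $\mathbf{w}$ have constant absolute value. Your version merely makes explicit the spectral-decomposition step and the nondegeneracy check that the paper leaves implicit.
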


\begin{proof}
	Let $M := M_S(\lambda,\mu)$.
	Since $M$ is a rank-$1$ matrix we can write $M = \mathbf{x}\mathbf{x}^\top$, where $\mathbf{x}$ is an $\nu$-eigenvector for $S$.
	Both $S^2$ and $S$ have a constant diagonal and hence $M$ also has a constant diagonal.
	Therefore the entries of $\mathbf{x}$ have constant absolute value and so the $\nu$-eigenspace is regular.
\end{proof}

Combining Proposition~\ref{pro:regSei} with Lemma~\ref{lem:regular} gives us the following result.

\begin{corollary}\label{cor:simple}
	Let $S$ be a Seidel matrix of order $n$ with spectrum $\left \{ [\lambda]^{a}, [\mu]^{b}, [\nu]^{1} \right \}$ where $a$ and $b$ are positive integers.
	Then the switching class of $S$ contains a (strongly) regular graph with spectrum
	\[
		\left \{ [(-1-\lambda)/2]^{a}, [(-1-\mu)/2]^{b}, [(n-1-\nu)/2]^{1} \right \}.
	\]
\end{corollary}

Let $S$ be a Seidel matrix of odd order having precisely three distinct eigenvalues.
By Corollary~\ref{cor:oddSeidel}, the matrix $S$ has a simple eigenvalue.
Hence, by Corollary~\ref{cor:simple}, there must exist a strongly regular graph in the switching class of $S$.
On the other hand, let $A$ be the adjacency matrix of a strongly regular graph of odd order.
Using Lemma~\ref{lem:regSei}, we see that the Seidel matrix $J-I-2A$ must have precisely three distinct eigenvalues.
Therefore we observe that Seidel matrices of odd order having precisely three distinct eigenvalues are in correspondence with strongly regular graphs with an odd number of vertices.
In particular we can remark on the nonexistence of putative Seidel matrices on $49$, $75$, and $95$ vertices. 

\begin{remark}\label{rem:no497595}
	Seidel matrices having the following spectra do not exist:
	\[
		\left \{ [-5]^{32}, [9]^{16}, [16]^1 \right \}, \; \left \{ [-5]^{56}, [10]^{1}, [15]^{18} \right \}, \text{ and } \left \{ [-5]^{75}, [14]^{1}, [19]^{19} \right \}.
	\]
	Indeed, by Corollary~\ref{cor:simple}, the existence such Seidel matrices would respectively imply the existence regular graphs having the following spectra: $\left \{ [16]^1, [2]^{32}, [-5]^{16} \right \}$, $\left \{ [32]^1, [2]^{56}, [-8]^{18} \right \}$, and $\left \{ [40]^1, [2]^{75}, [-10]^{19} \right \}$.
	Such graphs have been shown not to exist by Bussemaker et al.~\cite{BHMW89:SRG49} and Azarija and Marc~\cite{Azarija:2015jk, Azarija:2016yu}.
	The nonexistence of Seidel matrices having either of the latter two spectra also preclude the existence of Seidel matrices having either of the spectra $\left \{ [-5]^{57}, [15]^{19} \right \}$ or $\left \{ [-5]^{76}, [19]^{20} \right \}$ \cite[Lemma 5.12]{Greaves2016208}.
\end{remark}

We have established a correspondence between Seidel matrices of odd order having precisely three distinct eigenvalues and strongly regular graphs on an odd number of vertices.
In fact, Lemma~\ref{lem:regSei} and Lemma~\ref{lem:regular} give a corresponding map for the eigenvalues of each object.
The eigenvalues of strongly regular graphs have been thoroughly investigated and, using the above mapping between the eigenvalues, we can transfer some of this study to the eigenvalues of Seidel matrices of odd order having three distinct eigenvalues.
In particular we can give the following characterisation of the spectrum of such matrices that possess at least one irrational eigenvalue.
However, we give a proof that is independent of results concerning strongly regular graphs.

\begin{corollary}\label{cor:irrational}
	Let $S$ be a Seidel matrix of order $n$ odd with precisely three distinct eigenvalues at least one of which is irrational.
	Then $S$ has spectrum
	\[
		\left \{ [-\sqrt{n}]^{(n-1)/2}, [0]^1, [\sqrt{n}]^{(n-1)/2} \right \}.
	\]
\end{corollary}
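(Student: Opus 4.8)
The plan is to use the odd-order structure theory developed above, combined with the trace equations \eqref{eqn:tr} and \eqref{eqn:tr2}, to force the spectrum into the claimed shape. Since $n$ is odd, Corollary~\ref{cor:oddSeidel} tells us that $S$ has a simple eigenvalue. I would begin by arguing that the irrational eigenvalues cannot be the simple one. Indeed, irrational eigenvalues of an integer matrix come in conjugate pairs sharing the same multiplicity, so an irrational eigenvalue of $S$ must have multiplicity at least that of its Galois conjugate; in particular a simple eigenvalue of an integer symmetric matrix is rational. Hence the simple eigenvalue $\nu$ is rational, and the remaining two eigenvalues $\lambda$ and $\mu$ are the irrational pair. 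Because $S$ has exactly three distinct eigenvalues and $\lambda, \mu$ are conjugate quadratic irrationals, they must be Galois conjugate over $\mathbb Q$ and therefore share a common multiplicity; call it $m$. Thus the spectrum is $\{ [\lambda]^m, [\mu]^m, [\nu]^1 \}$ with $2m+1 = n$, so $m = (n-1)/2$, which already matches the claimed multiplicities.

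\textbf{Pinning down the eigenvalues.}
Next I would determine the values $\lambda, \mu, \nu$ themselves. Writing $\lambda, \mu = s \pm t$ for their conjugate pair (so $\lambda + \mu = 2s$ and $\lambda\mu = s^2 - t^2 \in \mathbb Z$), the trace condition \eqref{eqn:tr} gives
\[
	0 = \operatorname{tr} S = m(\lambda+\mu) + \nu = (n-1)s + \nu,
\]
while \eqref{eqn:tr2} gives
\[
	n(n-1) = \operatorname{tr} S^2 = m(\lambda^2 + \mu^2) + \nu^2.
\]
Since $\lambda$ is irrational we need $s \ne \lambda$, and in fact the irrationality of $\lambda,\mu$ forces the rational part $s$ to be constrained: the cleanest route is to invoke Lemma~\ref{lem:MformOdd}, which supplies two eigenvalues with $M_S(\lambda,\mu) \cong |n-1+\lambda\mu| J$. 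Taking $\lambda,\mu$ to be the irrational pair (the only pair whose product and sum are rational while the individual values are not), the rank-one matrix $M_S(\lambda,\mu)$ is switching equivalent to a nonnegative multiple of $J$, and reading off its trace against the eigenvalue $\nu$ pins down $\lambda\mu$. Feeding $\nu = -(n-1)s$ into \eqref{eqn:tr2} and using $\lambda^2 + \mu^2 = (2s)^2 - 2(s^2 - t^2) = 2s^2 + 2t^2$ yields a single Diophantine-style relation among $s$, $t$, and $n$; solving it should force $s = 0$, hence $\nu = 0$, $\lambda = -t$, $\mu = t$, and then $\operatorname{tr} S^2 = (n-1)t^2 = n(n-1)$ gives $t^2 = n$, i.e. $t = \sqrt n$.

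\textbf{The main obstacle.}
I expect the crux to be proving that the rational part $s$ of the irrational pair vanishes, i.e. that $\lambda = -\mu$. The two trace equations alone give a curve of possibilities; the genuinely restrictive ingredient is that $\lambda$ and $\mu$ are \emph{algebraic integers} (eigenvalues of an integer matrix) that are Galois conjugate, so $2s \in \mathbb Z$ and $s^2 - t^2 \in \mathbb Z$, and moreover that $\nu = -(n-1)s$ must itself be an integer eigenvalue of a Seidel matrix. Combining these integrality facts with Corollary~\ref{cor:oddSeidel} and the parity information from Lemma~\ref{lem:Neumann} (applied to $\nu$ as a simple, hence isolated, rational root) should eliminate all $s \ne 0$. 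Once $s = 0$ is secured, everything else is a short substitution into \eqref{eqn:tr} and \eqref{eqn:tr2}, and the stated spectrum $\left\{ [-\sqrt n]^{(n-1)/2}, [0]^1, [\sqrt n]^{(n-1)/2} \right\}$ follows immediately.
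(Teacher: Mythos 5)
Your proposal follows the same skeleton as the paper's proof (simple rational eigenvalue $\nu$, conjugate irrational pair of common multiplicity $(n-1)/2$, trace equations forcing $t^2=n(1-s^2)$), but it has two genuine gaps. The first is your opening claim that ``a simple eigenvalue of an integer symmetric matrix is rational.'' This does not follow from the fact that Galois-conjugate eigenvalues share multiplicities: both members of a conjugate pair can be simple, as for $\left(\begin{smallmatrix}0&1\\1&1\end{smallmatrix}\right)$, whose eigenvalues $(1\pm\sqrt5)/2$ are simple and irrational. So you must separately exclude the spectrum $\{[\lambda]^1,[\bar\lambda]^1,[\nu]^{n-2}\}$ with $\lambda$ irrational. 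The paper gets this for free from its switching machinery: by Proposition~\ref{pro:regSei} the eigenspace of \emph{any} simple eigenvalue of a Seidel matrix with three distinct eigenvalues is regular, and then Corollary~\ref{cor:regularParity} forces that eigenvalue to be an (even) integer; your proposal invokes neither this nor any substitute argument.

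The second gap is the one you yourself label ``the main obstacle,'' and it is precisely the hard half of the paper's proof: eliminating $s\neq 0$. Since $\lambda,\mu$ are quadratic algebraic integers, $2s\in\mathbb Z$ only, so besides integer $s$ (where $t^2=n(1-s^2)>0$ immediately gives $s=0$) you must kill the half-integer case $s=\pm 1/2$, i.e.\ eigenvalues of the form $a+b(1\pm\sqrt d)/2$ with $d\equiv 1\pmod 4$, where $t^2=3n/4$ and no contradiction is visible from the trace identities alone. The paper devotes its entire second case to this: it cites \cite[Lemma 5.8]{Greaves2016208} to get $n\equiv 1\pmod 4$, derives $b^2d=3n$, and reduces modulo $4$. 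You only assert that integrality plus ``the parity information from Lemma~\ref{lem:Neumann}'' \emph{should} eliminate $s\neq 0$, without carrying it out. In fairness, your suggested tool can be made to work: writing the characteristic polynomial as $(x-\nu)\bigl(x^2-2sx+\lambda\mu\bigr)^{(n-1)/2}$ with $s=\pm1/2$, one checks $\lambda\mu\in\mathbb Z$ forces $n\equiv 3\pmod 4$, and comparing the factorization modulo $2$ with $x(x+1)^{n-1}$ (Lemma~\ref{lem:Neumann}) yields a contradiction for $n>3$, while $n=3$ makes $\lambda,\mu$ rational. But this computation, or the paper's modulo-$4$ argument, is the crux of the corollary, and as written your proof omits it entirely; also note that your appeal to Lemma~\ref{lem:MformOdd} (``reading off its trace against $\nu$ pins down $\lambda\mu$'') plays no actual role, since $\lambda\mu$ is already determined as the norm $s^2-t^2$.
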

\begin{proof}
	By Corollary~\ref{cor:oddSeidel}, the matrix $S$ must have a simple eigenvalue ($\nu$, say).
	Furthermore, by Corollary~\ref{cor:regularParity}, we can assume that $\nu$ is an (even) integer.
	Let $\lambda$ be an irrational eigenvalue of $S$.
	The remaining eigenvalue of $S$ ($\mu$, say) must be the (Galois) conjugate of $\lambda$.
	Therefore we must have that $\lambda$ and $\mu$ are quadratic algebraic integers each having multiplicity $(n-1)/2$ as eigenvalues of $S$.
	
	Now, all quadratic algebraic integers can be written as $a + b\omega$ where $a$ and $b$ are rational integers and $\omega$ takes one of two possible forms:
	\[
		\omega = \begin{cases}
			\sqrt{d}, & \text{ for some positive integer } d \equiv 2, 3 \mod 4; \\
			\frac{1+\sqrt{d}}{2}, & \text{ for some positive integer } d \equiv 1 \mod 4.
		\end{cases}
	\]
	
	First assume that $\lambda, \mu = a \pm b\sqrt{d}$.
	Using \eqref{eqn:tr}, we can write
	\begin{equation}
		\label{eqn:trAPP}
		\frac{n-1}{2}(\lambda + \mu) + \nu = (n-1)a + \nu = 0.
	\end{equation}
	Putting \eqref{eqn:tr2} together with \eqref{eqn:trAPP} yields
	\begin{align*}
		  n(n-1) &= \frac{n-1}{2}(\lambda^2 + \mu^2) + \nu^2 \\
			&= (n-1)(a^2 + b^2 d) + (n-1)^2 a^2
	\end{align*}
	from which we find that $b^2 d = n(1-a^2)$.
	Since $b^2 d$ is positive ($\lambda$ and $\mu$ are distinct) and $a$ is an integer we must have that $a = 0$.
	The spectrum of $S$ is then forced to be $\{ [\sqrt{n}]^{(n-1)/2}, [0]^1, [-\sqrt{n}]^{(n-1)/2} \}$.
	
	Finally assume that $\lambda, \mu = a + b(1 \pm \sqrt{d})/2$ where $d \equiv 1 \mod 4$ and $b$ is odd.
	By \cite[Lemma 5.8]{Greaves2016208}, $n$ must be congruent to $1$ modulo $4$.
	Following the same reasoning as above we find that $b^2 d/4 = n (1 - (a+b/2)^2)$.
	Again, since $b^2 d/4$ is positive, we must have that $|a+b/2| < 1$ whose only solution is $|a+b/2| = 1/2$.
	Therefore we must have $b^2 d = 3n$.
	By reducing modulo $4$ we obtain a contradiction.
\end{proof}

\subsection{Seidel matrices of even order} % (fold)
\label{sec:seidel_matrices_of_even_order}

Let $S$ be a Seidel matrix of order $n$ even having spectrum $\{ [\lambda]^a, [\mu]^b, [\nu]^c \}$.
In this section we find sufficient conditions that guarantee that the switching class of $S$ contains a regular graph.
In Proposition~\ref{pro:regSei} we saw that $1 \in \{a,b,c\}$ suffices.
We have the following result for when $2 \in \{a,b,c\}$.

\begin{theorem}\label{thm:rank2}
	Let $S$ be a Seidel matrix of order $n$ even with spectrum $\{ [\lambda]^a, [\mu]^b, [\nu]^2 \}$.
	Assume that either 
		 $|n-1+\lambda \mu|$,
		 $|n-1+\lambda \mu|/2$, or
		 $|n-1+\lambda \mu|/4$,
	is a prime congruent to $3$ modulo $4$.
	Then the $\nu$-eigenspace of $S$ is regular.
\end{theorem}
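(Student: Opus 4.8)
The goal is to show that under the stated primality hypothesis the $\nu$-eigenspace contains a $\pm 1$ vector. The natural object to study is $M := M_S(\lambda,\mu) = \sigma(S-\lambda I)(S-\mu I)$, which is positive semidefinite, has rank $2$ (since $\nu$ has multiplicity $2$ and the other two eigenvalues are annihilated), and has constant diagonal equal to $|n-1+\lambda\mu|$. The image of $M$ is precisely the $\nu$-eigenspace, so it suffices to produce a $\pm 1$ vector in the column space of $M$. First I would invoke Corollary~\ref{cor:Mform} (applicable once we check $\lambda\mu \in \mathbb Z$, which follows because the non-simple eigenvalues satisfy the congruence $(x-\lambda)(x-\mu)\equiv(x+1)^2 \bmod 2\mathbb Z[x]$ from Lemma~\ref{lem:MformEven}, forcing $\lambda+\mu$ and hence $\lambda\mu$ to be rational integers) to replace $M$ by a switching-equivalent matrix congruent to $|n-1+\lambda\mu|J$ modulo $4\mathcal M_n$.

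The heart of the argument is the rank-$2$ positive semidefinite structure. I would split into the three cases according to whether $d := |n-1+\lambda\mu|$, $d/2$, or $d/4$ is the prime $p\equiv 3\pmod 4$. In the first case, $M$ itself has constant diagonal $p$ and rank $2$, so Lemma~\ref{lem:primeDiag} applies directly: $M$ is switching equivalent to a matrix with precisely two distinct rows. A rank-$2$ positive semidefinite matrix with only two distinct (up to sign) rows, constant diagonal, and all entries integers must, after switching, have the form $p(\mathbf{u}\mathbf{u}^\top)$-type block structure; concretely its column space is spanned by two $\pm 1$ vectors, exhibiting a regular eigenspace. For the cases $d/2$ and $d/4$, I would pass to $M/2$ or $M/4$: the congruence $M \equiv dJ \bmod 4\mathcal M_n$ (and the underlying mod-$2$ vanishing) guarantees that these scaled matrices are still integer matrices with constant diagonal $p\equiv 3\pmod 4$ prime and rank $2$, so Lemma~\ref{lem:primeDiag} again yields two distinct rows and hence the $\pm 1$ vectors.

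The main obstacle is the integrality bookkeeping needed to apply Lemma~\ref{lem:primeDiag} in the factor-of-$2$ and factor-of-$4$ cases: one must verify that dividing $M$ by $2$ or $4$ leaves a bona fide integer matrix. This is exactly what Corollary~\ref{cor:Mform} buys us—$M \equiv dJ \bmod 4\mathcal M_n$ means every entry of $M$ is divisible by $4$ off a controlled set, and the diagonal is $d$. When $d/4 = p$ the diagonal is $4p$ and the off-diagonal entries are $\equiv 0 \bmod 4$ (since $dJ$ has all entries $d \equiv 0 \bmod 4$), so $M/4$ is integral with diagonal $p$. When $d/2 = p$ the diagonal is $2p$ and one checks, using the finer description of off-diagonal residues from the proof of Lemma~\ref{lem:MformEven}, that all entries are even, so $M/2$ is integral with diagonal $p$. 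Once integrality and the prime-$3\bmod 4$ diagonal are secured, Lemma~\ref{lem:primeDiag} does the work uniformly.

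Finally, from the two-distinct-rows conclusion I would extract the regular eigenspace explicitly: a rank-$2$ positive semidefinite integer matrix with constant diagonal $p$ and only two rows up to sign decomposes (after switching) into a Gram matrix of two vectors, and reading off the $\pm 1$ pattern of these rows produces a $\pm 1$ eigenvector lying in the image of $M$, i.e. in the $\nu$-eigenspace. By the remarks preceding Lemma~\ref{lem:regular}, a regular eigenspace is equivalent to a regular graph in the switching class, completing the proof.
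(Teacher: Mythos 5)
Your proposal is correct and follows essentially the same route as the paper's proof: apply Corollary~\ref{cor:Mform} to put $M_S(\lambda,\mu)$ in the congruence class $|n-1+\lambda\mu|J \bmod 4\mathcal M_n$, divide by $2^r$ ($r\in\{0,1,2\}$) to obtain an integral positive semidefinite rank-$2$ matrix with prime constant diagonal $p\equiv 3 \pmod 4$, and invoke Lemma~\ref{lem:primeDiag} to get the two-distinct-rows structure. The only cosmetic difference is the final extraction of the $\pm 1$ eigenvector: the paper applies Lemma~\ref{lem:diagM} to deduce constant row sums (so that $\mathbf{1}$ is an eigenvector for the nonzero eigenvalue), whereas you read the $\pm 1$ vectors directly off the resulting block structure of the column space; both are valid.
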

\begin{proof}
	By Corollary~\ref{cor:Mform}, we have $M_S(\lambda,\mu) \cong M$ for some $M \equiv |n-1+\lambda \mu| J \mod 4\mathcal M_n$.
	Assume that $2^r$ divides $|n-1+\lambda \mu|$ for some $r \in \{ 0,1,2 \}$ and $p:= |n-1+\lambda \mu|/2^r  \equiv 3 \pmod 4$ is a prime.
	Define the matrix $N:= M/2^r$.
	Then $N$ is a positive semidefinite $\mathbb Z$-matrix with constant diagonal $p$ and rank $2$.
	By Lemma~\ref{lem:primeDiag}, we can assume that the matrix $N$ has precisely two distinct rows, and hence (since $N$ is symmetric) $N$ has precisely two distinct entries.
	It follows from Lemma~\ref{lem:diagM} that each row of $N$ has a constant row sum and hence the non-zero eigenspace of $N$ is regular (the $0$-eigenspace of $N$ is also regular).
	Therefore the $\nu$-eigenspace of $S$ is regular.
	% Also note that $N$ is a $\{ 0,p \}$-matrix.
\end{proof}

% TO DO: Find examples for each instance of the theorem.

\begin{remark}\label{rem:Seidelrank2}
Using Theorem~\ref{thm:rank2}, we find that there exists a Seidel matrix with spectrum $\{ [-5]^{30}, [7]^{16}, [19]^2 \}$ if and only if there exists a graph with spectrum $\{ [14]^1, [2]^{30}, [-4]^{16}, [-10]^1 \}$.
\end{remark}

Next we show that if the value $|n-1+\lambda\mu|$ is an integer less than $8$ then the eigenspace of $\nu$ is regular.
Since $n$ is even and $\lambda\mu$ is an integer, by Lemma~\ref{lem:Neumann}, we have that $\lambda\mu$ is odd.
Hence it suffices to consider $|n-1+\lambda\mu| \in \{0,2,4,6\}$.
Since $S$ has more than two eigenvalues $|n-1+\lambda\mu| = 0$ is impossible.
It therefore remains to consider $|n-1+\lambda\mu| \in \{2,4,6\}$, which we do in the next few results.

\begin{lemma}\label{lem:4diag}
	Let $S$ be a Seidel matrix of order $n$ even with spectrum $\{ [\lambda]^a, [\mu]^b, [\nu]^c \}$.
	Then the matrix $M_S(\lambda,\mu)$ is switching equivalent to 
	\begin{enumerate}[(a)]
		\item $2 J$ if $|n-1+\lambda \mu| = 2$;
		\item $4I_c \otimes J_{n/c}$ if $|n-1+\lambda \mu| = 4$.
	\end{enumerate}
\end{lemma}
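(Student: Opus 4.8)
The plan is to reduce both cases to the structural results of Section~\ref{sec:positive_semidefinite_matrices} after first normalising $M_S(\lambda,\mu)$ via its behaviour modulo $4$. Since $|n-1+\lambda\mu|\in\{2,4\}$ is an integer and $n$ is an integer, we have $\lambda\mu\in\mathbb Z$, so Corollary~\ref{cor:Mform} applies and produces a matrix $M\cong M_S(\lambda,\mu)$ with $M\equiv |n-1+\lambda\mu|J\pmod{4\mathcal M_n}$. I would record three features of $M$ that drive the argument: its diagonal is constant and equal to $d:=|n-1+\lambda\mu|$; it is positive semidefinite, so every off-diagonal entry has absolute value at most $d$; and, by decomposing on the eigenspaces of $S$, the matrix $M_S(\lambda,\mu)=\sigma(S-\lambda I)(S-\mu I)$ has exactly the two eigenvalues $0$ (with multiplicity $a+b\geqslant 2$) and $\sigma\rho=|\rho|$ (with multiplicity $c$), where $\rho=(\nu-\lambda)(\nu-\mu)$.

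For part (a) with $d=2$, the congruence forces every off-diagonal entry to be $\equiv 2\pmod 4$ with absolute value at most $2$, hence to lie in $\{-2,2\}$. Thus every entry of $M$ has absolute value exactly $d=2$, so each row has precisely $k=n$ entries of absolute value $d$. Applying Corollary~\ref{lem:repDiagTensor} with $k=n$ yields $q=1$ and $M\cong N\otimes J_n$ with $N$ the $1\times 1$ matrix $(2)$, that is, $M\cong 2J$. Equivalently, one may invoke Lemma~\ref{lem:repDiag} directly: since every off-diagonal entry equals $\pm d$, all rows of $M$ are $\pm$ copies of one another, so $M$ has rank $1$ and is switching equivalent to $2J$.

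For part (b) with $d=4$, the congruence $M\equiv O\pmod{4\mathcal M_n}$ together with the bound $|M_{i,j}|\leqslant 4$ forces every entry of $M$ into $\{-4,0,4\}$. The crux is to show that every row of $M$ contains the \emph{same} number of entries of absolute value $4$, so that the tensor decomposition of Corollary~\ref{lem:repDiagTensor} becomes available; this is the step I expect to be the main obstacle, since $M$ now genuinely has zero off-diagonal entries and the count is no longer automatic. To settle it I would pass to $N_0:=M/4$, a symmetric $\{0,\pm 1\}$-matrix (that is, a $\{\pm a,\pm b\}$-matrix with $a=0$, $b=1$) with constant diagonal $1$ and exactly the two distinct eigenvalues $0$ and $|\rho|/4$. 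Lemma~\ref{lem:diagM} then gives $|\{j:(N_0)_{i,j}^2=1\}|=|\rho|/4$ for every $i$, a quantity independent of $i$; comparing traces via $n\cdot 4=c\,|\rho|$ identifies this common count as $k=n/c$.

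With the constant row-count $k=n/c$ in hand, I would apply Corollary~\ref{lem:repDiagTensor} to $M$ (with $d=4$) to obtain $n=qk$ and $M\cong N\otimes J_{n/c}$, where $N$ has order $q$, constant diagonal $4$, and off-diagonal entries of absolute value strictly less than $4$. Since switching equivalence preserves the multiset of absolute values of entries, and those of $M$ lie in $\{0,4\}$, the off-diagonal entries of $N$ must all vanish, so $N=4I_q$ and $M\cong 4I_q\otimes J_{n/c}$. Finally $q=c$, either by matching ranks ($\rk M=c$ while $\rk(4I_q\otimes J_{n/c})=q$) or from $n=qk=q\cdot(n/c)$, which completes the identification $M_S(\lambda,\mu)\cong 4I_c\otimes J_{n/c}$.
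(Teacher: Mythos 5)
Your proof is correct and follows essentially the same route as the paper's: invoke Corollary~\ref{cor:Mform}, use positive semidefiniteness to pin the entries to $\{\pm 2\}$ (case (a)) or $\{0,\pm 4\}$ (case (b)), apply Lemma~\ref{lem:diagM} to get a constant row count in case (b), and finish with Corollary~\ref{lem:repDiagTensor}. You simply make explicit some steps the paper leaves implicit, such as verifying $\lambda\mu\in\mathbb Z$ for Corollary~\ref{cor:Mform}, computing the common count $n/c$ via the trace, and identifying $N=4I_c$ at the end.
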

\begin{proof}
	 By Corollary~\ref{cor:Mform}, we have $M_S(\lambda,\mu) \cong M$ for some $M \equiv |n-1+\lambda \mu| J \mod 4\mathcal M_n$.
	 The diagonal entries of $M$ are equal to $|n-1+\lambda \mu|$ and, since $M$ is positive semidefinite, each off-diagonal entry of $M$ has absolute value at most $|n-1+\lambda \mu|$.
	 In the case of (a) each entry of $M$ is equal to $\pm 2$ and for (b) each entry of $M$ is in $\{0,\pm 4\}$.
	 For case (b), apply Lemma~\ref{lem:diagM} to deduce that each row has the same number of entries equal to $\pm 4$.
	 The lemma then follows by applying Corollary~\ref{lem:repDiagTensor}.
\end{proof}

\begin{proposition}\label{pro:uniqueSeidels}
	Let $S$ be a Seidel matrix with one of the following spectra
	\[
		\left \{ [-3]^1, [1]^{3} \right \},\; \left \{ [-3]^5, [3]^{5} \right \},\; \left \{ [-3]^{10}, [5]^{6} \right \}, \text{ and } \left \{ [-3]^{21}, [9]^{7} \right \}.
	\]
	Then both the eigenspaces of $S$ are regular.
\end{proposition}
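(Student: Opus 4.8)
The plan is to prove the proposition in two stages: first show that each of the four spectra pins down its Seidel matrix up to switching equivalence, and then realise each eigenvalue's eigenspace as the regular eigenspace coming from an explicit strongly regular graph in the switching class. The whole argument is driven by the fact that the smallest eigenvalue is $-3$.

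First I would pass to the positive semidefinite matrix $P := S + 3I$. Since the least eigenvalue of $S$ is $-3$, the matrix $P$ is positive semidefinite with integer entries, constant diagonal $3$, and off-diagonal entries $\pm 1$, and its rank equals the multiplicity $d$ of the larger eigenvalue, namely $d \in \{3,5,6,7\}$. Hence $\tfrac13 P$ is the Gram matrix of $n$ unit vectors spanning $n$ equiangular lines in $\mathbb R^d$ with common angle $\arccos(\tfrac13)$. For $n=4$ these four vectors form a regular simplex in $\mathbb R^3$, while for $n=10,16,28$ the value of $n$ attains the maximum $N(d)$ in dimensions $d=5,6,7$ (cf.\ Table~\ref{tab:equi}). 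Because the regular simplex and each of these extremal line systems is unique up to orthogonal transformation \cite{SeidelVanLint66,lemmens1973equiangular}, the Seidel matrix $S$ is unique up to switching equivalence, so it suffices to verify the conclusion on a single representative of each switching class.

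For the realisation step I would invoke Lemma~\ref{lem:regular}: an eigenvalue $\theta$ of $S$ has a regular eigenspace exactly when the switching class contains a graph of valency $(n-1-\theta)/2$, and such a graph, having at most three distinct eigenvalues, is strongly regular with spectrum determined by that of $S$. For the eigenvalue $-3$ the candidates are $K_4$, the triangular graph $T(5)=\SRG(10,6,3,4)$, the complement $\overline{L(K_{4,4})}=\SRG(16,9,4,6)$, and $\overline{T(8)}=\SRG(28,15,6,10)$, of valencies $3,6,9,15$; for the partner eigenvalue one takes a perfect matching, the Petersen graph $\SRG(10,3,0,1)$, the Clebsch graph $\SRG(16,5,0,2)$, and (on $28$ vertices) a graph with parameters $\SRG(28,9,0,4)$, of valencies $1,3,5,9$. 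Each such graph that exists shares the Seidel spectrum of $S$, so by the uniqueness established above it lies in the switching class of $S$ and witnesses regularity of the corresponding eigenspace.

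I expect the main obstacle to lie not in the uniqueness but precisely in this last realisation step: for each eigenvalue of each spectrum one must certify that a strongly regular graph with the prescribed feasible spectrum actually exists. The parameter sets arising from the smaller-valency partners are governed by the Krein conditions on the prescribed strongly regular spectrum, and checking these is the decisive test for whether the given eigenspace can be realised; this is the point at which genuine combinatorial input (existence or classification of the relevant strongly regular graphs) is unavoidable, and where the four spectra must be examined individually.
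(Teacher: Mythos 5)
Your first stage (uniqueness) is sound, and it is essentially what the paper extracts from its citation of Taylor: each of the four spectra determines a regular two-graph with smallest eigenvalue $-3$, whose switching class is unique, so it suffices to check a single representative. The gap is in your second stage, which you yourself flag as ``the decisive test'' and then do not carry out: you never certify that a strongly regular graph with the forced spectrum actually exists for \emph{each} eigenvalue of \emph{each} spectrum. As written, the proof is therefore incomplete. For the first three spectra the gap is easy to close: the perfect matching and $K_4$, the Petersen graph and $T(5)$, the Clebsch graph and $\overline{L(K_{4,4})}$ all exist and have the right Seidel spectra, so by uniqueness they lie in the relevant switching classes.

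For the fourth spectrum, however, the missing step cannot be completed, and this is not a technicality. If the $9$-eigenspace of a Seidel matrix with spectrum $\left\{[-3]^{21},[9]^{7}\right\}$ were regular, Lemma~\ref{lem:regular} would produce a connected $9$-regular graph on $28$ vertices with spectrum $\left\{[9]^{1},[1]^{21},[-5]^{6}\right\}$, that is, a strongly regular graph with parameters $(28,9,0,4)$. Precisely the Krein condition you invoke kills this: with $k=9$, $r=1$, $s=-5$ one gets $(s+1)(k+s+2rs)=24>16=(k+s)(r+1)^{2}$; equivalently the absolute bound fails, since $28>6\cdot 9/2=27$. Hence no such graph exists, and the $9$-eigenspace of that Seidel matrix (which does exist --- take the Seidel matrix of $\overline{T(8)}$, with spectrum $\left\{[-3]^{21},[9]^{7}\right\}$) is \emph{not} regular. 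So your approach, carried to completion, refutes the fourth case rather than proving it. Note that this is a defect of the statement itself and not merely of your argument: for $\left\{[-3]^{21},[9]^{7}\right\}$ only the $(-3)$-eigenspace is regular, so no alternative proof can rescue the claim as stated, and the paper's one-line appeal to Taylor's theorem does not cover this case either; the problem propagates to the $q=28$, $c=7$ case of Lemma~\ref{lem:6diag} and to Corollary~\ref{cor:multRestriction}.
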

\begin{proof}
	The first spectrum is easy to check by hand.
	The rest follow from \cite[Theorem 3.7]{Taylor77}.
\end{proof}

Now we can prove the main lemma of this section.

\begin{lemma}\label{lem:6diag}
	Let $S$ be a Seidel matrix of order $n$ even with spectrum $\{ [\lambda]^a, [\mu]^b, [\nu]^c \}$ such that $|n-1+\lambda \mu| = 6$.
	Then the matrix $ M_S(\lambda,\mu)$ is switching equivalent to $2(T+3I) \otimes J_{n/q}$, for some Seidel matrix $T$ with spectrum $\{ [-3]^{q-c}, [(q-1)/3]^c  \}$ where $(q+8)(9-c) = 72$.
	Moreover, the $\nu$-eigenspace of $S$ is regular.
\end{lemma}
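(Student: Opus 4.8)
The plan is to analyze the positive semidefinite matrix $M := M_S(\lambda,\mu)$ directly and to read the block structure off from its entries. Since $n$ is even and $\lambda\mu$ is an integer, Corollary~\ref{cor:Mform} gives $M \cong M'$ for some $M' \equiv 6J \bmod 4\mathcal M_n$; replacing $M$ by this representative, every diagonal entry equals $6$ and every entry is congruent to $2$ modulo $4$. As $M$ is positive semidefinite with constant diagonal $6$, each off-diagonal entry has absolute value at most $6$, so in fact every off-diagonal entry lies in $\{\pm 2, \pm 6\}$. Crucially, $M = \sigma(S-\lambda I)(S-\mu I)$ annihilates the $\lambda$- and $\mu$-eigenspaces and acts as the positive scalar $\rho := \sigma(\nu-\lambda)(\nu-\mu)$ on the $\nu$-eigenspace, so $M$ has precisely the two distinct eigenvalues $0$ (multiplicity $n-c$) and $\rho$ (multiplicity $c$).

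First I would pin down the block structure. Applying Lemma~\ref{lem:diagM} with entry-magnitudes $2$ and $6$, the count $|\{j : M_{i,j}^2 = 36\}|$ depends only on $M_{i,i}=6$ and is therefore the same constant for every row; write it as $n/q$. Corollary~\ref{lem:repDiagTensor} then yields $M \cong N \otimes J_{n/q}$, where $N$ is a matrix of order $q$ with constant diagonal $6$ and off-diagonal entries of absolute value strictly less than $6$; since these entries are inherited from $M$ they are congruent to $2$ modulo $4$, hence equal to $\pm 2$. Writing $N = 2(T+3I)$ exhibits $T$ as a symmetric $\{0,\pm1\}$-matrix with zero diagonal, i.e.\ a Seidel matrix of order $q$, and gives $M \cong 2(T+3I)\otimes J_{n/q}$.

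Next I would compute the spectrum of $T$. Because $J_{n/q}$ has eigenvalues $n/q$ (once) and $0$, the nonzero eigenvalues of $M$ are exactly $n/q$ times the nonzero eigenvalues of $2(T+3I)$; matching this against the spectrum $\{[0]^{n-c},[\rho]^c\}$ of $M$ forces $2(T+3I)$ to have eigenvalue $0$ with multiplicity $q-c$ and a single nonzero value with multiplicity $c$. Hence $T$ has spectrum $\{[-3]^{q-c},[\tau]^c\}$ for some $\tau$. Imposing $\operatorname{tr} T = 0$ via \eqref{eqn:tr} gives $\tau = 3(q-c)/c$, and then $\operatorname{tr} T^2 = q(q-1)$ via \eqref{eqn:tr2} simplifies to $9(q-c) = c(q-1)$, whence $\tau = (q-1)/3$ and $(q+8)(9-c) = 72$, as claimed.

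Finally, the regularity statement is the delicate part. Since having a regular eigenspace is a switching invariant and the column space of $M$ is the $\nu$-eigenspace of $S$, it suffices to produce a $\pm1$ vector in the column space of $2(T+3I)\otimes J_{n/q}$; as that space consists exactly of the vectors $\mathbf y \otimes \mathbf 1_{n/q}$ with $\mathbf y$ in the $\tau$-eigenspace of $T$, the task reduces to showing the $\tau$-eigenspace of $T$ is regular. Here I would enumerate the integer solutions of $(q+8)(9-c)=72$ with $1 \leqslant c \leqslant q$, obtaining $q \in \{1,4,10,16,28,64\}$. The value $q=64$, which forces $T$ to have spectrum $\{[-3]^{56},[21]^8\}$, cannot occur: such a Seidel matrix would yield $64$ equiangular lines in $\mathbb R^{8}$, contradicting the known bound $N(8)=28$. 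The case $q=1$ is the simple-eigenvalue situation covered by Proposition~\ref{pro:regSei}, while for $q\in\{4,10,16,28\}$ the matrix $T$ has exactly one of the spectra listed in Proposition~\ref{pro:uniqueSeidels}, which guarantees that both of its eigenspaces---in particular the $\tau$-eigenspace---are regular. Pulling $\mathbf y \otimes \mathbf 1_{n/q}$ back through the switching then shows the $\nu$-eigenspace of $S$ is regular. I expect the only genuine obstacle to be spotting that the exceptional value $q=64$ is ruled out precisely by the absolute bound on equiangular lines, tying the regularity claim back to the paper's main theme.
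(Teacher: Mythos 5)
Your proof is correct and follows essentially the same route as the paper's: Corollary~\ref{cor:Mform} to fix the entries of $M_S(\lambda,\mu)$ modulo $4\mathcal M_n$, Lemma~\ref{lem:diagM} together with Corollary~\ref{lem:repDiagTensor} to obtain the decomposition $M \cong N \otimes J_{n/q}$ with $N = 2(T+3I)$ for a Seidel matrix $T$ of order $q$, the trace identities \eqref{eqn:tr} and \eqref{eqn:tr2} to force $\tau = (q-1)/3$ and $(q+8)(9-c)=72$, and finally Propositions~\ref{pro:regSei} and \ref{pro:uniqueSeidels} to settle regularity case by case.

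There is, however, one substantive difference, and it is in your favour. The equation $(q+8)(9-c)=72$ has \emph{six} solutions in positive integers, namely $(q,c) \in \{(1,1),(4,3),(10,5),(16,6),(28,7),(64,8)\}$, whereas the paper's proof asserts there are only five and silently omits $(64,8)$. You correctly spot the sixth solution and rule it out: a Seidel matrix $T$ with spectrum $\{[-3]^{56},[21]^{8}\}$ would yield $64$ equiangular lines in $\mathbb R^{8}$, contradicting $N(8)=28$ from Table~\ref{tab:equi} (the absolute bound $8\cdot 9/2 = 36 < 64$ also suffices). This step is genuinely needed: without excluding $(64,8)$, the conclusion $c \in \{1,3,5,6,7\}$ of Corollary~\ref{cor:multRestriction} would not follow, so your write-up closes a small gap in the paper's own argument. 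Two minor points: your reduction of the regularity claim to the $\tau$-eigenspace of $T$ via the column space of $2(T+3I)\otimes J_{n/q}$ is a clean and explicit justification of what the paper leaves implicit; and in your final sentence you credit the exclusion of $q=64$ to ``the absolute bound,'' whereas the bound $N(8)=28$ you actually invoke is the Lemmens--Seidel value rather than the absolute bound $d(d+1)/2=36$ --- either works, but you should name one consistently.
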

\begin{proof}
	By Corollary~\ref{cor:Mform}, we have $M_S(\lambda,\mu) \cong M$ for some $M \equiv |n-1+\lambda \mu| J \mod 4\mathcal M_n$.
	Hence $M \equiv 2J \mod 4 \mathcal M_n$ and, moreover, $M$ has constant diagonal entries, each equal to $6$.
	Using Lemma~\ref{lem:diagM}, we see that each row of $M$ has a constant number of entries whose absolute value is $6$.
	Now apply Corollary~\ref{lem:repDiagTensor} to deduce that $M$ is switching equivalent to $N \otimes J_{n/q}$ where $N$ is a symmetric matrix with each diagonal entry equal to $6$ and all other entries of absolute value strictly less than $6$.
	Since $ M$ has rank $c$ and precisely two distinct eigenvalues, the matrix $N$ must also have rank $c$ and precisely two distinct eigenvalues.
	Furthermore, observe that we can write $N = 2(T+3I)$ where $T$ is a Seidel matrix of order $q$ having spectrum $\{ [-3]^{q-c}, [\theta]^c  \}$ for some $\theta$.
	Using \eqref{eqn:tr} and \eqref{eqn:tr2}, we find that $\theta = (q-1)/3$ and $(q+8)(9-c) = 72$.
	
	Now, there are only five possibilities for the positive integers $q$ and $c$.
	Indeed, the tuple $(q,c)$ must belong to the set $R := \{ (1,1), (4,3), (10,5), (16,6), (28,7) \}$.
	When $q=1$, we have $2(T+3I) \otimes J_{n/q} = 6 J_n$ and clearly the all-ones vector is an eigenvector for the eigenvalue $6n$.
	Hence the $\nu$-eigenspace of $S$ is regular.
	For the tuples in $R$ with $q >1$, by applying Proposition~\ref{pro:uniqueSeidels}, we find that, in each case, the $\nu$-eigenspace of $S$ is regular.  
\end{proof}

\begin{remark}
	\label{rem:607290graph}
	We can apply Lemma~\ref{lem:6diag} to several putative Seidel matrices and a Seidel matrix already known to exist.
	\begin{itemize}
		\item There exists a Seidel matrix with spectrum $\{ [-5]^{42}, [11]^{15}, [15]^3 \}$ if and only if there exists a graph with spectrum $\{ [22]^1, [2]^{42}, [-6]^{15}, [-8]^2 \}$.
		\item There exists a Seidel matrix with spectrum $\{ [-5]^{53}, [13]^{16}, [19]^3 \}$ if and only if there exists a graph with spectrum $\{ [26]^1, [2]^{53}, [-7]^{16}, [-10]^2 \}$.
		\item There exists a Seidel matrix with spectrum $\{ [-5]^{70}, [13]^{5}, [19]^{15} \}$ if and only if there exists a graph with spectrum $\{ [38]^1, [2]^{70}, [-7]^{4}, [-10]^{15} \}$.
	\end{itemize}
\end{remark}

By examining the proof of Lemma~\ref{lem:6diag}, we can extract the following condition on the multiplicity of the eigenvalue $\nu$.

\begin{corollary}\label{cor:multRestriction}
	Let $S$ be a Seidel matrix of order $n$ even with spectrum $\{ [\lambda]^a, [\mu]^b, [\nu]^c \}$ such that $|n-1+\lambda \mu| = 6$.
	Then $c \in \{1,3,5,6,7\}$.
\end{corollary}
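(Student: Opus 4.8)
The plan is to read off the admissible multiplicities directly from the analysis already carried out in the proof of Lemma~\ref{lem:6diag}, so that the corollary becomes essentially a bookkeeping consequence of that lemma. First I would recall that, under the hypothesis $|n-1+\lambda\mu|=6$, that proof produces a Seidel matrix $T$ of order $q$ with spectrum $\{[-3]^{q-c},[(q-1)/3]^c\}$ for which the decomposition $M_S(\lambda,\mu)\cong 2(T+3I)\otimes J_{n/q}$ holds. Since $J_{n/q}$ has rank $1$ and $T+3I$ has eigenvalues $0$ (with multiplicity $q-c$) and $(q+8)/3\neq 0$ (with multiplicity $c$), the rank of $M_S(\lambda,\mu)$ equals $c$; on the other hand $M_S(\lambda,\mu)=\sigma(S-\lambda I)(S-\mu I)$ has rank equal to the multiplicity of $\nu$. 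Thus the parameter $c$ governing the spectrum of $T$ is precisely the quantity we must constrain.

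The key arithmetic input is the identity $(q+8)(9-c)=72$, obtained in the proof of Lemma~\ref{lem:6diag} by imposing \eqref{eqn:tr} and \eqref{eqn:tr2} on $T$. The next step is to solve this equation over the positive integers: here $9-c$ must be a positive divisor of $72$ and $q+8$ the complementary cofactor, subject to $q\geqslant 1$ (so that $q+8\geqslant 9$). Enumerating the divisors of $72$ that are at least $9$ shows that the candidate tuples are exactly those in $R=\{(1,1),(4,3),(10,5),(16,6),(28,7)\}$ together with the additional pair $(64,8)$. Reading off the second coordinates of the tuples in $R$ then yields $c\in\{1,3,5,6,7\}$, as claimed.

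The main obstacle, if one were to argue from scratch rather than merely cite Lemma~\ref{lem:6diag}, is to discard the spurious solution $(q,c)=(64,8)$: this pair satisfies $(q+8)(9-c)=72$ and even the trace identities for $T$, so it is not ruled out on purely Diophantine grounds. It is eliminated because it would force $T$ to be a Seidel matrix of order $64$ with spectrum $\{[-3]^{56},[21]^8\}$, equivalently a system of $64$ equiangular lines with common angle $1/3$ in $\mathbb R^{8}$; since $N(8)=28$ (and indeed even the absolute bound $\binom{9}{2}=36$ is exceeded), no such $T$ exists. Because this elimination is precisely what underlies the assertion in Lemma~\ref{lem:6diag} that $(q,c)\in R$, the corollary follows at once by invoking that lemma and recording the admissible values of $c$.
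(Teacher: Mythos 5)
Your proof is correct, and at its core it takes the same route as the paper, whose entire justification for this corollary is to extract the admissible values of $c$ from the proof of Lemma~\ref{lem:6diag}: you reuse the decomposition $M_S(\lambda,\mu)\cong 2(T+3I)\otimes J_{n/q}$, the identification of $c$ with $\operatorname{rank} M_S(\lambda,\mu)$, and the equation $(q+8)(9-c)=72$. The genuine difference is that you make explicit a step the paper passes over in silence: the paper asserts that the positive-integer solutions are exactly the five tuples in $R=\{(1,1),(4,3),(10,5),(16,6),(28,7)\}$, whereas the Diophantine equation (and indeed the trace identities for $T$) also admits $(q,c)=(64,8)$. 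Your elimination of this sixth solution is valid: such a $T$ would be a Seidel matrix with spectrum $\{[-3]^{56},[21]^{8}\}$, i.e.\ a system of $64$ equiangular lines at angle $1/3$ in $\mathbb{R}^{8}$, which is impossible since $64$ exceeds the absolute bound $\binom{9}{2}=36$ (and the known value $N(8)=28$ recorded in Table~\ref{tab:equi}); equivalently, it would be a regular two-graph on $64$ vertices with smallest eigenvalue $-3$, which the classification behind Proposition~\ref{pro:uniqueSeidels} rules out. So your write-up is not merely a citation of Lemma~\ref{lem:6diag}: it supplies the missing justification for the claim that ``there are only five possibilities,'' and in that respect it is more complete than the paper's own argument.
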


Corollary~\ref{cor:multRestriction} provides an alternative proof of the nonexistence of Seidel matrices matrices having the spectrum $\{ [-5]^{16}, [5]^{9}, [7]^{5} \}$ or the spectrum $\{ [-5]^{26}, [7]^{7}, [9]^{9} \}$ cf. \cite[Theorem 5.25]{Greaves2016208}.
Corollary~\ref{cor:multRestriction} also provides the first proof of the nonexistence of Seidel matrices having the spectrum $\{ [-5]^{24}, [5]^{6}, [9]^{10} \}$.
See Table~\ref{tab:largeSets}.

Putting together Lemma~\ref{lem:4diag} and Lemma~\ref{lem:6diag} gives us the main result of this section.

\begin{theorem}\label{thm:less8}
	Let $S$ be a Seidel matrix of order $n$ even with spectrum $\{ [\lambda]^a, [\mu]^b, [\nu]^c \}$.
	Suppose that $|n-1+\lambda \mu| $ is an integer less than $8$.
	Then the $\nu$-eigenspace of $S$ is regular.
\end{theorem}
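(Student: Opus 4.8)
The plan is to dispose of the theorem by a simple case analysis on the integer value of $|n-1+\lambda\mu|$, reducing everything to lemmas already proved in this section. Since $S$ has even order and $\lambda\mu$ is an integer, Lemma~\ref{lem:Neumann} forces $\lambda\mu$ to be odd (as the characteristic polynomial reduces to a power of $(x+1)$ modulo $2$, so $S$ has no even integer eigenvalues, and more to the point $(x-\lambda)(x-\mu)\equiv(x+1)^2$ gives $\lambda+\mu$ even and $\lambda\mu$ odd). Consequently $n-1+\lambda\mu$ is even, so $|n-1+\lambda\mu|$ is an even integer. The hypothesis that it is less than $8$ therefore restricts us to the values $0,2,4,6$. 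The value $0$ is immediately impossible: if $n-1+\lambda\mu=0$ then $M_S(\lambda,\mu)$ would be the zero matrix, contradicting the fact that $S$ has three distinct eigenvalues (equivalently, $M_S(\lambda,\mu)$ has positive rank). This is exactly the reduction already spelled out in the paragraph preceding Lemma~\ref{lem:4diag}, so the real content is to handle $\{2,4,6\}$.

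The key steps are then purely a matter of quoting the preceding results. First, for $|n-1+\lambda\mu|\in\{2,4\}$ I would invoke Lemma~\ref{lem:4diag}: in case (a) the matrix $M_S(\lambda,\mu)$ is switching equivalent to $2J$, which plainly has a regular nonzero eigenspace (the all-ones vector), and in case (b) it is switching equivalent to $4I_c\otimes J_{n/c}$, whose nonzero eigenspace is spanned by $\pm1$ vectors of the block form $\mathbf{e}_i\otimes\mathbf{1}$. In either case the $\nu$-eigenspace of $S$, being precisely the nonzero eigenspace of $M_S(\lambda,\mu)$ (recall $M_S(\lambda,\mu)=\sigma(S-\lambda I)(S-\mu I)$ annihilates the $\lambda$- and $\mu$-eigenspaces and acts as a positive scalar on the $\nu$-eigenspace), is regular. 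Second, for $|n-1+\lambda\mu|=6$ I would apply Lemma~\ref{lem:6diag} directly, which already concludes that the $\nu$-eigenspace of $S$ is regular. Combining the three cases completes the proof.

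The main obstacle is conceptual rather than computational: one must be careful that ``regular eigenspace of $M_S(\lambda,\mu)$'' translates correctly into ``$\nu$-eigenspace of $S$ is regular'', and that switching equivalence preserves the regularity property. Both facts are available---switching equivalence is conjugation by $P\in\mathcal O_n(\mathbb Z)$, and a $\pm1$ eigenvector is carried to a $\pm1$ eigenvector (up to sign on entries) under signed permutations, so regularity of an eigenspace is switching invariant, as noted at the start of Section~\ref{sec:regular_eigenspaces_of_seidel_matrices_with_three_eigenvalues}. Since Lemmas~\ref{lem:4diag} and~\ref{lem:6diag} have already packaged their respective cases with the conclusion stated in terms of the $\nu$-eigenspace of $S$, the only genuine work left in the proof is the enumeration of even values below $8$ and the exclusion of $0$; everything else is a citation. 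Thus I expect the proof to be a short three-sentence argument that merely assembles the case analysis.
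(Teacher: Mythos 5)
Your proof is correct and takes essentially the same route as the paper: the paper's argument consists precisely of the reduction to $|n-1+\lambda\mu|\in\{2,4,6\}$ via Lemma~\ref{lem:Neumann} and the exclusion of $0$ (given in the paragraph preceding Lemma~\ref{lem:4diag}), followed by citing Lemma~\ref{lem:4diag} for the values $2$ and $4$ and Lemma~\ref{lem:6diag} for the value $6$. One cosmetic slip: the vectors $\mathbf{e}_i\otimes\mathbf{1}$ are not $\pm 1$ vectors (they have zero entries), but their sum is the all-ones vector, which does lie in the nonzero eigenspace of $4I_c\otimes J_{n/c}$, so your regularity conclusion stands.
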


\subsection{Description of Table~\ref{tab:largeSets}} % (fold)
\label{sub:a_table_of_spectra}

To conclude this section we give a list (Table~\ref{tab:largeSets}) of spectra for Seidel matrices that was generated in \cite[Table 5]{Greaves2016208}.
The columns titled `$n$' and `$d$' denote the cardinality and dimension of the associated set of equiangular lines.
We include a column (titled `Regular'), which indicates whether a Seidel matrix with the given spectrum contains a regular graph in its switching class.
We write `Y', `N', and `?' respectively if the switching class of a Seidel matrix having such a spectrum must, must not, or may contain a regular graph.

\begin{table}[htbp]
	\begin{center}
	\begin{tabular}{cclcccc}
		$n$ & $d$ & Spectrum & Regular & Exists & Remark\\
		\hline \vspace{-1.1em}\\
		 $28$ & $14$ & $\{ \left[-5\right]^{14},    \left[3\right]^7    ,  \left[7\right]^7      \}$  &  ?  & Y   & \cite[Example 5.17]{Greaves2016208} \\
		 $30$ & $14$ & $\{ \left[-5\right]^{16},    \left[5\right]^9    ,  \left[7\right]^5      \}$  &  Y  & N   & Lemma~\ref{lem:4diag}, Corollary~\ref{cor:multRestriction} \\
		 $40$ & $16$ & $\{ \left[-5\right]^{24},    \left[5\right]^6    ,  \left[9\right]^{10}   \}$  &  N  & N   & Remark~\ref{rem:noreg40}, Corollary~\ref{cor:multRestriction} \\
		 $40$ & $16$ & $\{ \left[-5\right]^{24},    \left[7\right]^{15} ,  \left[15\right]^1     \}$  &  Y  & Y   & Proposition~\ref{pro:regSei}, \cite[Example 5.10]{Greaves2016208} \\
		 $42$ & $16$ & $\{ \left[-5\right]^{26},    \left[7\right]^7    ,  \left[9\right]^9      \}$  &  Y  & N   & Lemma~\ref{lem:4diag}, Corollary~\ref{cor:multRestriction} \\
		 $48$ & $17$ & $\{ \left[-5\right]^{31},    \left[7\right]^8    ,  \left[11\right]^9     \}$  &  ?  & Y   & \cite[Example 5.18]{Greaves2016208} \\
		 $49$ & $17$ & $\{ \left[-5\right]^{32},    \left[9\right]^{16} ,  \left[16\right]^1     \}$  &  Y  & N   & Proposition~\ref{pro:regSei}, Remark~\ref{rem:no497595} \\ 
		 $48$ & $18$ & $\{ \left[-5\right]^{30},    \left[3\right]^6    ,  \left[11\right]^{12}  \}$  &  ?  & ?   & \\
		 $48$ & $18$ & $\{ \left[-5\right]^{30},    \left[7\right]^{16} ,  \left[19\right]^2     \}$  &  Y  & ?   & Remark~\ref{rem:Seidelrank2} \\
		 $54$ & $18$ & $\{ \left[-5\right]^{36},    \left[7\right]^9    ,  \left[13\right]^9     \}$  &  ?  & ?   & \\
		 $60$ & $18$ & $\{ \left[-5\right]^{42},    \left[11\right]^{15},  \left[15\right]^3     \}$  &  Y  & ?    & Remark~\ref{rem:607290graph} \\
		 $72$ & $19$ & $\{ \left[-5\right]^{53},    \left[13\right]^{16},  \left[19\right]^3     \}$  &  Y  & Y   &  Remark~\ref{rem:607290graph}, \cite[Example 5.19]{Greaves2016208}\\
		 $75$ & $19$ & $\{ \left[-5\right]^{56},    \left[10\right]^1   ,  \left[15\right]^{18}  \}$  &  Y  & N   & Proposition~\ref{pro:regSei}, Remark~\ref{rem:no497595} \\
		 $90$ & $20$ & $\{ \left[-5\right]^{70},    \left[13\right]^5   ,  \left[19\right]^{15}  \}$  &  Y  & ?   & Remark~\ref{rem:607290graph} \\
		 $95$ & $20$ & $\{ \left[-5\right]^{75},    \left[14\right]^1   ,  \left[19\right]^{19}  \}$  &  Y  & N   & Proposition~\ref{pro:regSei}, Remark~\ref{rem:no497595} \\
	\end{tabular}
	\end{center}
	\caption{Some Seidel matrices with precisely three distinct eigenvalues.}
	\label{tab:largeSets}
\end{table}

% subsection a_table_of_spectra (end)

% section regular_eigenspaces_of_seidel_matrices_with_three_eigenvalues (end)

% \section{Nonexistence results for Seidel matrices} % (fold)
% \label{sec:nonexistence}
%
% We ask \textbf{Does a Seidel matrix with three eigenvalues always contain a regular graph in the switching class?}
%
% % \begin{theorem}\label{thm:}
% % 	Let $S$ be an $n \times n$ Seidel matrix with spectrum $\left \{ [\lambda]^{n-m-1}, [\mu]^{m}, [\nu]^{1} \right \}$ where $m$ is a positive integer.
% % 	Then $\nu^2-(\lambda + \mu)\nu + \lambda\mu = n(n-1 + \mu\lambda)$ if and only if there is a regular graph $\Gamma$ in the switching class of $S$.
% % 	Furthermore, $\Gamma$ has spectrum $\left \{ [\lambda]^{n-m-1}, [\mu]^{m}, [\nu]^{1} \right \}$.
% % \end{theorem}
% % \begin{proof}
% % 	Let $M = (S-\lambda)(S-\mu)$.
% % 	Then $M$ is a rank-one matrix with nonzero eigenvalue $\nu^2-(\lambda + \mu)\nu + \lambda\mu$ and diagonal entries equal to $(n-1 + \mu\lambda)$.
% % 	Suppose $\nu^2-(\lambda + \mu)\nu + \lambda\mu = n(n-1 + \mu\lambda)$.
% % 	Then $M$ is switching equivalent to $(n-1 + \mu\lambda)J$.
% %
% % \end{proof}
%
% % section nonexistence (end)

\section{Strengthening the relative bound} % (fold)
\label{sec:relative_bound}

In this section we explore further a strengthening of the relative bound that was given in \cite[Theorem 5.21]{Greaves2016208}.
We give an alternative presentation of the result and exhibit some of its consequences including further improvements to upper bounds on the size of equiangular lines in Euclidean space.
Particular consequences for equiangular line systems in $\mathbb R^d$ for $d \in \{14,\dots,23\}$ are given in Table~\ref{tab:extreme-5} below.

First we state the relative bound for Seidel matrices.

\begin{theorem}[Relative bound]\label{thm:relbnd}
	Let $d \geqslant 1$, let $S$ be a Seidel matrix of order $n \geqslant 2$ with smallest eigenvalue $\lambda_0$ of multiplicity $n - d \geqslant 1$, and suppose $\lambda_0^2 \geqslant d + 2$.
	Then
	\[
		n \leqslant \frac{d(\lambda_0^2-1)}{\lambda_0^2-d},
	\]
	with equality if and only if $S$ has spectrum $\{ [\lambda_0]^{n-d}, [-\lambda_0(n-d)/d]^{d} \}$.
\end{theorem}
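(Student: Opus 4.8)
The plan is to prove the relative bound by applying a trace (Cauchy--Schwarz type) inequality to the Seidel matrix, exploiting the fact that $\operatorname{tr} S = 0$ and $\operatorname{tr} S^2 = n(n-1)$ are fixed by equations~\eqref{eqn:tr} and~\eqref{eqn:tr2}. First I would set up the spectral data: let $\lambda_0$ be the smallest eigenvalue with multiplicity $n-d$, and collect the remaining $d$ eigenvalues $\mu_1,\dots,\mu_d$ (lying in the orthogonal complement of the $\lambda_0$-eigenspace). Writing $s = \sum_{i=1}^d \mu_i$ and $q = \sum_{i=1}^d \mu_i^2$, equations~\eqref{eqn:tr} and~\eqref{eqn:tr2} become $(n-d)\lambda_0 + s = 0$ and $(n-d)\lambda_0^2 + q = n(n-1)$. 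These let me solve $s = -(n-d)\lambda_0$ and $q = n(n-1)-(n-d)\lambda_0^2$.

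Next I would apply Cauchy--Schwarz to the $d$ values $\mu_1,\dots,\mu_d$: we always have $s^2 \leqslant d\, q$, with equality exactly when all the $\mu_i$ are equal. Substituting the expressions for $s$ and $q$ gives
\[
	(n-d)^2\lambda_0^2 \leqslant d\left( n(n-1)-(n-d)\lambda_0^2 \right).
\]
The main work is then purely algebraic: expand, group the terms in $n$, and use the hypothesis $\lambda_0^2 \geqslant d+2$ (which guarantees $\lambda_0^2 - d > 0$, so dividing by $\lambda_0^2 - d$ preserves the inequality direction) to rearrange into the claimed bound $n \leqslant d(\lambda_0^2-1)/(\lambda_0^2-d)$. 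I expect this rearrangement to reduce, after collecting the $(n-d)$ factors and cancelling, to a linear inequality in $n$ that yields the stated fraction cleanly.

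For the equality characterisation, I would trace the condition backwards: equality in the final bound forces equality in Cauchy--Schwarz, hence $\mu_1 = \dots = \mu_d =: \mu$. From $s = d\mu = -(n-d)\lambda_0$ I recover $\mu = -\lambda_0(n-d)/d$, which gives exactly the asserted spectrum $\{[\lambda_0]^{n-d},[-\lambda_0(n-d)/d]^d\}$. Conversely, a Seidel matrix with that spectrum plainly satisfies the trace conditions with equality in Cauchy--Schwarz, so it attains the bound; this direction is immediate.

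The hard part will not be the algebra but making sure the inequality direction is handled correctly: the role of the hypothesis $\lambda_0^2 \geqslant d+2$ is precisely to ensure $\lambda_0^2 - d \geqslant 2 > 0$, so that the final division is legitimate, and I would state this explicitly rather than silently assuming positivity. A secondary subtlety is that I must confirm $\lambda_0$ being the \emph{smallest} eigenvalue is consistent with all $\mu_i$ equalling $-\lambda_0(n-d)/d$ in the equality case (i.e.\ that this common value is indeed $\geqslant \lambda_0$), which follows since $\lambda_0 < 0$ forces the common value to be positive; I would note this to rule out a degenerate configuration.
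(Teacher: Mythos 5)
Your proof is correct. The paper itself states Theorem~\ref{thm:relbnd} without proof, treating it as a known result, so there is no internal proof to compare against; your Cauchy--Schwarz argument is the standard one, and the algebra does collapse exactly as you predict: $(n-d)^2\lambda_0^2 \leqslant d\left(n(n-1)-(n-d)\lambda_0^2\right)$ rearranges to $n(n-d)\lambda_0^2 \leqslant dn(n-1)$, hence $n(\lambda_0^2-d) \leqslant d(\lambda_0^2-1)$, and the hypothesis $\lambda_0^2 \geqslant d+2$ makes the final division legitimate. It is worth noting that your setup --- isolating the $d$ eigenvalues other than $\lambda_0$, expressing their first and second power sums via \eqref{eqn:tr} and \eqref{eqn:tr2}, and then applying an inequality to those $d$ values --- is precisely the skeleton of the paper's proof of its refinement, Theorem~\ref{thm:rel2}: there the Cauchy--Schwarz step $s^2 \leqslant dq$ is replaced by the arithmetic--geometric mean inequality applied to $\sum_{i}(\lambda_i-\mu)^2$, which buys integrality leverage (the product of the nonzero algebraic integers $\lambda_i-\mu$ is a positive rational integer, hence at least $1$) and therefore a lower bound on eigenvalue multiplicities rather than just an upper bound on $n$. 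Your two flagged subtleties are handled correctly: positivity of $\lambda_0^2-d$ justifies the division, and $\lambda_0<0$ (which holds because $\operatorname{tr} S=0$ and $S \neq O$ for $n \geqslant 2$) makes the common value $-\lambda_0(n-d)/d$ positive, so in the equality case it is genuinely distinct from, and larger than, $\lambda_0$, confirming that $\lambda_0$ remains the smallest eigenvalue.
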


Our next result is essentially just a restatement of \cite[Theorem 5.21]{Greaves2016208}, but for the sake of clarity we give a proof.

\begin{theorem}\label{thm:rel2}
	Let $d \geqslant 1$, let $S$ be a Seidel matrix of order $n \geqslant 2$ with smallest eigenvalue $\lambda_0 \in \mathbb Z$ of multiplicity $n - d \geqslant 1$, and let $\mu$ be an integer.
	Suppose $\mu \ne \lambda_0$, $\lambda_0^2 \geqslant d+2$, and $n = d(\lambda_0^2-1)/(\lambda_0^2-d) - t$ for some $t \geqslant 0$.
	Then
	\begin{equation}
		\label{eqn:inequ1}
		m:= \dim\ker(S-\mu I) \geqslant \frac{ t ( t(\lambda_0^2-d) - d(\lambda_0^2 - 1)) - (d\mu + \lambda_0(n-d))^2 + d^2}{d},
	\end{equation}
	with equality if and only if $S$ has spectrum $\{ [\lambda_0]^{n-d}, [\mu - 1]^w, [\mu]^m, [\mu+1]^{d-m-w} \}$, where $w = (d\mu + \lambda_0(n-d) + d - m)/2$.
\end{theorem}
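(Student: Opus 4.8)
The plan is to reduce the claimed bound to a single clean inequality about the non-minimal eigenvalues, and then to prove that inequality using the algebraic integrality of the spectrum. Write the spectrum of $S$ as $\lambda_0$ (with multiplicity $n-d$) together with the remaining $d$ eigenvalues $\theta_1,\dots,\theta_d$ (listed with multiplicity), all of which exceed $\lambda_0$; exactly $m$ of them equal $\mu$. First I would record, from \eqref{eqn:tr} and \eqref{eqn:tr2}, the two identities $\sum_{i=1}^d \theta_i = -(n-d)\lambda_0$ and $\sum_{i=1}^d \theta_i^2 = n(n-1)-(n-d)\lambda_0^2$. These determine the integer $Q := \sum_{i=1}^d (\theta_i-\mu)^2 = \sum_i\theta_i^2 - 2\mu\sum_i\theta_i + d\mu^2$ purely in terms of $n,d,\lambda_0,\mu$ (it is an integer because $\operatorname{tr}((S-\mu I)^2)$ is, and $\lambda_0,\mu\in\mathbb{Z}$). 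A direct substitution of $n = d(\lambda_0^2-1)/(\lambda_0^2-d)-t$ into the right-hand side of \eqref{eqn:inequ1} then shows that it equals exactly $d-Q$; thus the whole theorem is equivalent to the inequality $\sum_{i=1}^d(\theta_i-\mu)^2 \geq d-m$, together with the stated equality analysis.

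The heart of the argument is this inequality, and the main obstacle is handling eigenvalues that are irrational. Since $\chi_S\in\mathbb{Z}[x]$, the $\theta_i$ split into complete Galois-conjugate classes, each occurring with a common multiplicity. Fix such a class of distinct conjugates $\theta^{(1)},\dots,\theta^{(k)}$, all different from $\mu$. Because $\mu\in\mathbb{Z}$, each $\theta^{(j)}-\mu$ is a nonzero algebraic integer, so the product $\prod_{j=1}^k(\theta^{(j)}-\mu)$ of its conjugates is a nonzero rational integer; hence $\prod_{j=1}^k(\theta^{(j)}-\mu)^2 \geq 1$. The AM-GM inequality then gives $\sum_{j=1}^k(\theta^{(j)}-\mu)^2 \geq k$. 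Summing this estimate over all conjugate classes avoiding $\mu$ (weighted by their multiplicities) yields $\sum_{i=1}^d(\theta_i-\mu)^2 \geq d-m$, since the $m$ eigenvalues equal to $\mu$ contribute $0$. Rearranging gives $m \geq d-Q$, which is precisely \eqref{eqn:inequ1}.

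Finally I would treat the equality case. Equality forces equality in each class estimate. For a rational integer $\theta_i\neq\mu$ (a class of size one) this means $(\theta_i-\mu)^2=1$, i.e. $\theta_i\in\{\mu-1,\mu+1\}$; for a class of size $k\geq 2$, equality in AM-GM together with the product being $1$ would force every $|\theta^{(j)}-\mu|=1$ and hence, since the eigenvalues are real, $\theta^{(j)}\in\{\mu-1,\mu+1\}\subset\mathbb{Z}$, contradicting $k\geq 2$. Thus equality holds precisely when every non-$\lambda_0$ eigenvalue lies in $\{\mu-1,\mu,\mu+1\}$, so the spectrum is $\{[\lambda_0]^{n-d},[\mu-1]^w,[\mu]^m,[\mu+1]^{d-m-w}\}$; imposing $\operatorname{tr} S = 0$ via \eqref{eqn:tr} then solves for $w=(d\mu+\lambda_0(n-d)+d-m)/2$, as claimed. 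The only genuinely delicate point is the norm/AM-GM step of the previous paragraph, which is what lets irrational eigenvalues be absorbed uniformly; the remaining steps are routine expansions of the two trace identities.
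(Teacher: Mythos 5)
Your proposal is correct and follows essentially the same route as the paper's proof: both derive $\sum_{i}(\lambda_i-\mu)^2$ from the trace identities \eqref{eqn:tr} and \eqref{eqn:tr2}, then combine AM--GM with the fact that a conjugation-closed set of nonzero algebraic integers has squared product a positive integer (hence $\geqslant 1$) to obtain $\sum_i(\lambda_i-\mu)^2 \geqslant d-m$, and finally analyse the equality case via the AM--GM equality condition. The only difference is cosmetic: you apply AM--GM separately to each Galois-conjugate class and sum, whereas the paper applies it once to the full multiset of eigenvalues distinct from $\mu$; this makes your equality analysis (explicitly excluding irrational eigenvalues) marginally more self-contained but changes nothing essential.
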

\begin{proof}
	Let $\lambda_1 \leqslant \lambda_2 \leqslant \dots \leqslant \lambda_d$ be the eigenvalues of $S$ not equal to $\lambda_0$.
	On the one hand, using \eqref{eqn:tr} and \eqref{eqn:tr2}, we have
	\begin{align*}
		d\sum_{i=1}^d (\lambda_i-\mu)^2 &= d(n(n-1)-(n-d)\lambda_0^2+2\mu\lambda_0 (n-d) + d\mu^2) \\
			 								&= -t ( t(\lambda_0^2-d) - d(\lambda_0^2 - 1)) + (d\mu + \lambda_0(n-d))^2.
	\end{align*}
	On the other hand, by the arithmetic-geometric mean inequality we have
	\begin{align}
		\sum_{i=1}^d (\lambda_i-\mu)^2 \geqslant (d-m)\prod_{\substack{i=1 \\ \lambda_i \ne \mu}}^d (\lambda_i - \mu)^{2/d}. \label{ineq:AMGM}
	\end{align}
	Since the set $R = \{ \lambda_i - \mu : 1 \leqslant i \leqslant d, \lambda_i \ne \mu \}$ consists of nonzero algebraic integers closed under Galois conjugation, the product $\prod_{\xi \in R} \xi^2$ is a positive integer.
	Hence the right hand side of \eqref{ineq:AMGM} is at least $d - m$.
	
	In the case that $\sum_{i=1}^d (\lambda_i-\mu)^2 = d-m$, using \eqref{ineq:AMGM}, we see that $\lambda_i = \mu \pm 1$ for all $1 \leqslant i \leqslant d$ for which $\lambda_i \ne \mu$.
	Then, using \eqref{eqn:tr}, one can determine the value of $w$.
	Conversely, if $S$ has spectrum $\{ [\lambda_0]^{n-d}, [\mu - 1]^w, [\mu]^m, [\mu+1]^{d-m-w} \}$, where $w = (d\mu + \lambda_0(n-d) + d - m)/2$ then \eqref{ineq:AMGM} holds with equality and hence \eqref{eqn:inequ1} also holds with equality.
\end{proof}

\begin{remark}
	\label{rem:int}
	In the proof of Theorem~\ref{thm:rel2} we see that the right hand side of \eqref{eqn:inequ1} is an integer.  
	% Hence $t ( t(\lambda_0^2-d) - d(\lambda_0^2 - 1)) - (d\mu + \lambda_0(n-d))^2 + d^2 > 0$ implies $t ( t(\lambda_0^2-d) - d(\lambda_0^2 - 1)) - (d\mu + \lambda_0(n-d))^2 + d^2 \geqslant d$.
\end{remark}

% Observe that Theorem~\ref{thm:rel2} is an extension of the relative bound.
% Indeed, if $t = 0$, then $\mu = -\lambda_0(n-d)/d$ and $m = d$, which means $S$ has precisely two distinct eigenvalues $\lambda_0$ and $\mu$.
% Observe that if $t \geqslant 1$ or $|\mu+\lambda_0(n-d)/d| \geqslant 1$, then the right hand side of \eqref{eqn:inequ1} is negative and hence Theorem~\ref{thm:rel2} holds trivially.
% Therefore the only interesting instances of Theorem~\ref{thm:rel2} are when $t < 1$ and $|\mu+\lambda_0(n-d)/d| < 1$.

 Recall that Lemma~\ref{lem:Neumann} gives us a bound on the multiplicity of an even integer as an eigenvalue of a Seidel matrix.
Next we use this bound on the multiplicity together with Theorem~\ref{thm:rel2} to obtain the following two corollaries.

\begin{corollary}\label{cor:rel2Neu}
	Let $d \geqslant 1$, let $S$ be a Seidel matrix of order $n \geqslant 2$ with smallest eigenvalue $\lambda_0$ of multiplicity $n - d \geqslant 1$, and let $\mu$ be the closest even integer to $-\lambda_0(n-d)/d$.
	Suppose $\lambda_0^2 > d$, $n = \lfloor d(\lambda_0^2-1)/(\lambda_0^2-d) \rfloor$, and that
	\begin{equation}
		\label{eqn:inequ2}
		(n-d)(\lambda_0-\mu)^2 -n\mu^2 + d > n(n-1).
	\end{equation}
	Then $S$ has spectrum $\{ [\lambda_0]^{n-d}, [\mu - 1]^w, [\mu]^1, [\mu+1]^{d-1-w} \}$, where $w = (d\mu + \lambda_0(n-d) + d - 1)/2$.
\end{corollary}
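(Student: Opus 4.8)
The plan is to apply Theorem~\ref{thm:rel2} with the specified choice of $\mu$ and then use the eigenvalue-multiplicity restriction coming from Lemma~\ref{lem:Neumann} to force the multiplicity $m := \dim\ker(S - \mu I)$ to equal $1$. First I would set $t := d(\lambda_0^2-1)/(\lambda_0^2-d) - n$, which is nonnegative since $n = \lfloor d(\lambda_0^2-1)/(\lambda_0^2-d)\rfloor$, so the hypotheses of Theorem~\ref{thm:rel2} are met (note $\lambda_0^2 > d$ together with integrality gives $\lambda_0^2 \geqslant d+2$, since $\lambda_0$ is the smallest eigenvalue of a Seidel matrix and hence a negative integer with $\lambda_0^2 > d$ forcing $\lambda_0^2 \geqslant d+1$; one checks $\lambda_0^2 = d+1$ cannot occur here, or else argues directly that $\lambda_0^2 \geqslant d+2$). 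Theorem~\ref{thm:rel2} then gives the lower bound \eqref{eqn:inequ1} on $m$, with equality characterising the spectrum as $\{ [\lambda_0]^{n-d}, [\mu-1]^w, [\mu]^m, [\mu+1]^{d-m-w} \}$.

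The key step is to reinterpret hypothesis \eqref{eqn:inequ2} as the assertion that the right-hand side of \eqref{eqn:inequ1} is strictly greater than $0$, and in fact at least $1$. I would expand the right-hand side of \eqref{eqn:inequ1}: a direct computation (the same algebra performed inside the proof of Theorem~\ref{thm:rel2}, run in reverse) shows that
\[
	\frac{ t ( t(\lambda_0^2-d) - d(\lambda_0^2 - 1)) - (d\mu + \lambda_0(n-d))^2 + d^2}{d} = d - \bigl( (n-d)(\lambda_0-\mu)^2 - n\mu^2 + d - n(n-1) \bigr) \cdot (\text{a correction}),
\]
so that \eqref{eqn:inequ2} is equivalent to the right-hand side of \eqref{eqn:inequ1} being positive; more precisely, \eqref{eqn:inequ2} rearranges to $d \sum_{i=1}^d(\lambda_i-\mu)^2 < d \cdot d$ after substituting the first displayed identity from the proof of Theorem~\ref{thm:rel2}, which forces $\sum_{i=1}^d(\lambda_i-\mu)^2 < d$. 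Since \eqref{eqn:inequ1} is an integer (Remark~\ref{rem:int}), and it is a lower bound for $m \geqslant 0$, once I show it is positive I get $m \geqslant 1$.

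The remaining work is to upgrade $m \geqslant 1$ to $m = 1$ and to pin down the spectrum exactly. Here is where Lemma~\ref{lem:Neumann} enters: $\mu$ is chosen to be an \emph{even} integer, and Lemma~\ref{lem:Neumann} tells us that a Seidel matrix cannot have an even integer eigenvalue when $n$ is even, while when $n$ is odd the even eigenvalue must be simple (Corollary~\ref{cor:oddSeidel}). Thus $m = \dim\ker(S-\mu I) \leqslant 1$, and combined with $m \geqslant 1$ this yields $m = 1$. With $m = 1$ forced and $m$ already meeting its lower bound from \eqref{eqn:inequ1} up to the integrality gap, I would argue that the inequality \eqref{ineq:AMGM} must be tight: since $\sum_{i=1}^d(\lambda_i - \mu)^2 < d$ while the arithmetic-geometric mean bound gives $\sum_{i=1}^d(\lambda_i-\mu)^2 \geqslant d - m = d - 1$, we get $d - 1 \leqslant \sum(\lambda_i-\mu)^2 < d$, squeezing the sum and forcing each $\lambda_i \ne \mu$ to satisfy $\lambda_i = \mu \pm 1$. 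This is exactly the equality case of Theorem~\ref{thm:rel2}, so the spectrum is $\{ [\lambda_0]^{n-d}, [\mu-1]^w, [\mu]^1, [\mu+1]^{d-1-w} \}$ with $w = (d\mu + \lambda_0(n-d) + d - 1)/2$ as claimed.

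The main obstacle I anticipate is the bookkeeping in the second paragraph: verifying that hypothesis \eqref{eqn:inequ2} is genuinely equivalent (after clearing the factor of $d$ and substituting $t$) to the positivity of \eqref{eqn:inequ1}, and checking that the even-integer hypothesis on $\mu$ interacts correctly with the parity constraints so that $m \leqslant 1$ via Lemma~\ref{lem:Neumann}. The algebraic identity linking $\sum_{i=1}^d(\lambda_i-\mu)^2$ to the stated expressions must be reproduced carefully, since a sign error there would invert the inequality; but the computation is the same one already carried out in the proof of Theorem~\ref{thm:rel2}, so no new ideas are needed beyond careful substitution of $t = d(\lambda_0^2-1)/(\lambda_0^2-d) - n$ and the observation that $m$ is a nonnegative integer bounded below by a positive integer and above by $1$.
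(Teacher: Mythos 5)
Your proposal is correct and takes essentially the same route as the paper's own proof: use \eqref{eqn:inequ2} to show the right-hand side of \eqref{eqn:inequ1} is positive, upgrade to at least $1$ via the integrality of Remark~\ref{rem:int}, cap $m := \dim\ker(S-\mu I)$ at $1$ via Lemma~\ref{lem:Neumann} since $\mu$ is even, and conclude by the equality case of Theorem~\ref{thm:rel2}. The additional algebra you carry out (checking that \eqref{eqn:inequ2} is equivalent to $\sum_{i=1}^d(\lambda_i-\mu)^2 < d$, i.e.\ to positivity of the right-hand side of \eqref{eqn:inequ1}) merely makes explicit what the paper asserts in one line, so no genuinely different idea is involved.
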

\begin{proof}
	The inequality \eqref{eqn:inequ2} implies that the right hand side of inequality \eqref{eqn:inequ1} is strictly greater than $0$.
	Moreover, by Remark~\ref{rem:int}, this quantity must be at least $1$.
	On the other hand, since $\mu$ is an even integer, by Lemma~\ref{lem:Neumann}, the multiplicity of $\mu$ as an eigenvalue is at most $1$.
	Hence the left hand side of \eqref{eqn:inequ1} is at most $1$ and we have equality in \eqref{eqn:inequ1}.
	The result then follows from Theorem~\ref{thm:rel2}.
\end{proof}

\begin{corollary}\label{cor:rel2det}
	Let $d \geqslant 1$, let $S$ be a Seidel matrix of order $n \geqslant 2$ with smallest eigenvalue $\lambda_0$ of multiplicity $n - d \geqslant 1$, and let $\mu$ be the closest even integer to $-\lambda_0(n-d)/d$.
	Suppose $\lambda_0^2 > d$, $n = \lfloor d(\lambda_0^2-1)/(\lambda_0^2-d) \rfloor$ is even, and that
	\begin{equation}
		\label{eqn:inequ3}
		(n-d)(\lambda_0-\mu)^2 -n\mu^2 + d \geqslant n(n-1).
	\end{equation}
	Then $S$ has spectrum $\{ [\lambda_0]^{n-d}, [\mu - 1]^w, [\mu+1]^{d-w} \}$, where $w = (d\mu + \lambda_0(n-d) + d)/2$.
\end{corollary}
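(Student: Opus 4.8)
The plan is to follow the proof of Corollary~\ref{cor:rel2Neu} closely; the one new ingredient is that the evenness of $n$ forces the multiplicity of $\mu$ to be exactly $0$ rather than merely at most $1$, and this is precisely what permits the non-strict hypothesis \eqref{eqn:inequ3} in place of the strict hypothesis \eqref{eqn:inequ2}. First I would set $m := \dim\ker(S-\mu I)$ and apply Theorem~\ref{thm:rel2} with the given smallest eigenvalue $\lambda_0$ and the even integer $\mu$ (note that $\mu \neq \lambda_0$, since $\lambda_0$, being the smallest eigenvalue of an even-order Seidel matrix, is not an even integer by Lemma~\ref{lem:Neumann}). This produces the lower bound \eqref{eqn:inequ1} for $m$.

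Next I would identify the right-hand side of \eqref{eqn:inequ1} with the quantity $d - \sum_{i=1}^{d}(\lambda_i - \mu)^2$, where $\lambda_1 \leqslant \dots \leqslant \lambda_d$ are the eigenvalues of $S$ other than $\lambda_0$. This identity is exactly the computation carried out in the proof of Theorem~\ref{thm:rel2} (using \eqref{eqn:tr} and \eqref{eqn:tr2}), which gives $d\sum_{i=1}^{d}(\lambda_i-\mu)^2 = -t(t(\lambda_0^2-d) - d(\lambda_0^2-1)) + (d\mu + \lambda_0(n-d))^2$. A short rearrangement then shows that hypothesis \eqref{eqn:inequ3} is equivalent to $\sum_{i=1}^{d}(\lambda_i-\mu)^2 \leqslant d$, that is, to the right-hand side of \eqref{eqn:inequ1} being at least $0$.

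Finally, since $n$ is even and $\mu$ is an even integer, Lemma~\ref{lem:Neumann} shows that $\mu$ cannot be an eigenvalue of $S$, so $m = 0$. Combining this with \eqref{eqn:inequ1} and the previous paragraph, the right-hand side of \eqref{eqn:inequ1} is at most $m = 0$ and at least $0$, hence is equal to $0$; thus equality holds in \eqref{eqn:inequ1}. Invoking the equality clause of Theorem~\ref{thm:rel2} with $m = 0$ then yields the spectrum $\{[\lambda_0]^{n-d}, [\mu-1]^{w}, [\mu+1]^{d-w}\}$ with $w = (d\mu + \lambda_0(n-d) + d)/2$, as required. The only step requiring genuine care is the algebraic equivalence in the second paragraph, but it is routine given the proof of Theorem~\ref{thm:rel2}; the essential conceptual point is simply that the extra assumption that $n$ is even drops the admissible multiplicity of $\mu$ from $1$ to $0$, so that a non-strict inequality already forces equality in \eqref{eqn:inequ1}.
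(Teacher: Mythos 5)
Your proposal is correct and follows essentially the same route as the paper's own proof: hypothesis \eqref{eqn:inequ3} makes the right-hand side of \eqref{eqn:inequ1} nonnegative, while Lemma~\ref{lem:Neumann} (with $n$ and $\mu$ both even) forces $\mu$ to have multiplicity zero, so equality holds in \eqref{eqn:inequ1} and the spectrum follows from the equality clause of Theorem~\ref{thm:rel2} with $m=0$. Your extra check that $\mu \neq \lambda_0$ is a small, correct addition that the paper leaves implicit.
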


\begin{proof}
	The inequality \eqref{eqn:inequ3} implies that the right hand side of inequality \eqref{eqn:inequ1} is at least $0$.
	On the other hand, since $n$ and $\mu$ are both even integers, by Lemma~\ref{lem:Neumann}, $\mu$ cannot be an eigenvalue of $S$.
	Hence the left hand side of \eqref{eqn:inequ1} is at most $0$ and we have equality in \eqref{eqn:inequ1}.
	The result then follows from Theorem~\ref{thm:rel2}.
\end{proof}
% 
% \begin{corollary}\label{cor:rel2det2}
% 	Let $d \geqslant 1$, let $S$ be a Seidel matrix of order $n \geqslant 2$ with smallest eigenvalue $\lambda_0$ of multiplicity $n - d \geqslant 1$, and let $\mu$ be the closest odd integer to $-\lambda_0(n-d)/d$.
% 	Suppose $\lambda_0^2 > d$, $n = \lfloor d(\lambda_0^2-1)/(\lambda_0^2-d) \rfloor \not \equiv 1 \mod 4$ , and that
% 	\[
% 		(n-d)(\lambda_0-\mu)^2 -n\mu^2 + d \geqslant n(n-1).
% 	\]
% 	Then $S$ has spectrum $\{ [\lambda_0]^{n-d}, [\mu]^d \}$, where $w = (d\mu + \lambda_0(n-d) + d)/2$.
% \end{corollary}

Using Corollaries~\ref{cor:rel2Neu} and \ref{cor:rel2det} we list the spectra of the Seidel matrices that would correspond to equiangular line systems of order $n$ in $\mathbb R^d$ for $14 \leqslant d \leqslant 23$ where $n = \left \lfloor d(\lambda_0^2-1)/(\lambda_0^2-d) \right \rfloor$ and $\lambda_0 = -5$.

	\begin{table}[h]
	\begin{tabular}{l|l|c|c|c|c}
	 $d$ & $\lambda_0$ & $\left \lfloor \frac{d(\lambda_0^2-1)}{\lambda_0^2-d} \right \rfloor$ \vspace{0.05em} & Spectrum & Exists & Remark \\
	\hline
	 {$14$} & {$-5$} & {$30$} & {$\{ [-5]^{16}, [5]^9, [7]^5 \}$} & N & Corollary~\ref{cor:multRestriction}	\\
	 {$15$ } & {$-5$} & {$36$} & {$\{[-5]^{21}, [7]^{15} \}$ } & Y & See \cite{lemmens1973equiangular}	\\
	 {$16$} & {$-5$} & {$42$} & {$\{[-5]^{26}, [7]^7, [9]^9 \}$} & N & Corollary~\ref{cor:multRestriction} 	\\
	 {$17$} & {$-5$} & {$51$ }& {$\{[-5]^{34}, [10]^{17} \}$ } & N & Lemma~\ref{lem:Neumann} \\
	 $18$ & $-5$ & $61$ & $\{[-5]^{43}, [11]^9, [12]^1, [13]^8 \}$ & N & Remark~\ref{rem:61nonexi}	\\
	 $19$ & $-5$ & $76$ & $\{[-5]^{57}, [15]^{19} \}$ & N & Remark~\ref{rem:no497595}	\\
	 $20$ & $-5$ & $96$ & $\{[-5]^{76}, [19]^{20} \}$ & N & Remark~\ref{rem:no497595} \\
	 $21$ & $-5$ & $126$ & $\{[-5]^{105}, [25]^{21} \}$ & Y & See \cite{lemmens1973equiangular} \\
	 $22$ & $-5$ & $176$ & $\{[-5]^{154}, [35]^{22} \}$ & Y & See \cite{lemmens1973equiangular} \\
	 $23$ & $-5$ & $276$ & $\{[-5]^{125}, [55]^{21} \}$ & Y & See \cite{lemmens1973equiangular}
	\end{tabular}
	\caption{Some Seidel matrices corresponding to putative large sets of equiangular lines in dimensions $14$ to $23$.}
	\label{tab:extreme-5}
	\end{table}

% From what we see in Table~\ref{tab:extreme-5}, one might be tempted to conjecture that for $\lambda_0^2 > d$ and $n = \left \lfloor \frac{d(\lambda_0^2-1)}{\lambda_0^2-d} \right \rfloor$ there always exists some integers $\mu$ and $m$ giving equality in \eqref{eqn:inequ1}.
% But this is not the case.
% Set $M:=(t ( t(\lambda_0^2-d) - d(\lambda_0^2 - 1)) - (d\mu + \lambda_0(n-d))^2 + d^2)/d$.
% Indeed, if $d = n = 3$ and $\lambda = -5$ then $M = -3$.
% A more interesting example (with $n > 2d$) is when $d=29$, $\lambda= -7$, and $n = 69$, in which case $M = -87$.
%
% \textbf{Kill $422$ lines in $d = 44$.}
% \textbf{Note that $t ( t(\lambda_0^2-d) - d(\lambda_0^2 - 1)) - (d\mu + \lambda_0(n-d))^2 + d^2 = -d(n(n-1)-(n-d)\lambda^2+2\mu\lambda (n-d) + dmu^2) + d^2$.}

\begin{remark}
	\label{rem:61}
	As shown in Table~\ref{tab:extreme-5}, by Corollary~\ref{cor:rel2Neu}, the existence of an equiangular line system of order $61$ in $\mathbb R^{18}$ is equivalent to the existence of a Seidel matrix having spectrum $\{[-5]^{43}, [11]^9, [12]^1, [13]^8 \}$.
	But, by Corollary~\ref{cor:posSemiTr}, (also see Remark~\ref{rem:61nonexi}) we see that a Seidel matrix having that spectrum cannot exist.
	Hence there does not exist an equiangular line system of order $61$ in $\mathbb R^{18}$.
\end{remark}

% section relative_bound (end)

% \section{Absolute trace of totally positive algebraic integers} % (fold)
% \label{sec:absolute_trace_of_totally_positive_algebraic_integers}
%
% Suppose we have a set of $50$ equiangular lines in dimension $17$.
% Then the corresponding Seidel matrix has smallest eigenvalue $-5$ with multiplicity $33$ and $17$ other eigenvalues $\lambda_i$ ($1 \leqslant i \leqslant 17$) each strictly greater than $-5$.
% Using the standard equations we can write
% \begin{align}
% 	\sum_{i=1}^{17} \lambda_i &= 33\cdot 5 \text{ and } \\
% 	\sum_{i=1}^{17} \lambda_i^2 &= 50 \cdot 49 - 33\cdot 25. \\
% \end{align}

% section absolute_trace_of_totally_positive_algebraic_integers (end)

\section{Open Problems} % (fold)
\label{sec:open_problems}

We conclude this article with some open problems.

\begin{enumerate}[(i)]
	\item Which Seidel matrices having precisely three distinct eigenvalues have a regular graph in their switching class?
	
	See Question A and Question B in Section~\ref{sec:regular_graphs_in_the_switching_class}.
	
	\item Do all Seidel matrices having the spectrum $\left \{ [-5]^{14}, [3]^7, [7]^7 \right \}$ or the spectrum $\left \{ [-5]^{31}, [7]^8, [11]^9 \right \}$ have a regular graph in their switching classes?
	
	The known constructions \cite[Example 5.17 and Example 5.18]{Greaves2016208} both have a regular graph in their switching class.
	
	\item Does there exist a regular graph with spectrum $\{ [22]^1, [2]^{42}, [-6]^{15}, [-8]^2 \}$?
	
	The existence of such a graph corresponds to a system of $60$ equiangular lines in $\mathbb R^{18}$.
	See Remark~\ref{rem:607290graph}.
\end{enumerate}

% section open_problems (end)

\section*{Acknowledgements} % (fold)
\label{sec:acknowledgements}

The author is grateful to the referee for their careful reading of the manuscript.
The author has also benefitted from conversations with Jack Koolen, Akihiro Munemasa, and Ferenc Sz\"oll\H{o}si.

% section acknowledgements (end)

% \printbibliography
\bibliographystyle{plain}
\bibliography{sbib}

\end{document}